\theoremstyle{plain}
\newtheorem{theorem}{Theorem}[section]
\newtheorem{proposition}[theorem]{Proposition}
\newtheorem{lemma}[theorem]{Lemma}
\theoremstyle{definition}
\newtheorem{definition}[theorem]{Definition}
\theoremstyle{remark}
\newtheorem{remark}[theorem]{Remark}
\DeclareMathOperator{\argmin}{argmin}
\def\x{{\mathbf{x}}}
\def\b{{\mathbf{b}}}
\def\y{{\mathbf{y}}}
\def\z{{\mathbf{z}}}
\def\w{{\mathbf{w}}}
\def\u{{\mathbf{u}}}
\def\r{{\mathbf{r}}}
\def\v{{\mathbf{v}}}
\def\a{{\mathbf{a}}}
\def\b{{\mathbf{b}}}
\def\bz{{\bf 0}}
\def\cX{{\mathcal{X}}}
\def\cD{{\mathcal{D}}}
\def\N{{\mathbb{N}}}
\def\lbd{{\lambda}}
\newcommand{\R}{\mathbb{R}}
\newcommand{\blam}{{\boldsymbol\lambda}}
\newcommand{\bga}{{\boldsymbol\gamma}}
\newcommand{\bro}{{\boldsymbol{\rho}}}
\newcommand{\bth}{{\boldsymbol{\theta}}}
\newcommand{\bmu}{{\boldsymbol{\mu}}}
\newcommand{\bbe}{{\boldsymbol{\beta}}}
\newcommand{\dist}{\operatorname{dist}}
\begin{document}
%\runningtitle{Lower-level Duality Based Reformulation}
%\maketitle
\twocolumn[

\aistatstitle{Lower-level Duality Based Reformulation and Majorization Minimization Algorithm for Hyperparameter Optimization} 

\aistatsauthor{ He Chen$^{1}$ \And Haochen Xu$^{2}$ \And Rujun Jiang$^{2\,\dag
}$ \And Anthony Man-Cho So$^{1}$ }

%\aistatsaddress{ CUHK \And Fudan University  \And Fudan University \And  CUHK }
\aistatsaddress{ $^{1}$Dept.\ of SEEM,
 The Chinese University of Hong Kong\\ \ $^{2}$School of Data Science, Fudan University }

]

\begin{abstract}
Hyperparameter tuning is an important task of machine learning, which can be formulated as a bilevel program (BLP).
 However, most existing algorithms are not applicable for BLP with non-smooth lower-level problems.
To address this, we propose a single-level reformulation of the BLP based on lower-level duality without involving any implicit value function.
To solve the reformulation, we propose a majorization minimization algorithm that marjorizes the constraint in each iteration.
Furthermore, we show that the subproblems of the proposed algorithm for several widely-used hyperparameter turning models can be reformulated into conic programs that can be efficiently solved by the off-the-shelf solvers.
We theoretically prove the convergence of the proposed algorithm and demonstrate its superiority through numerical experiments.
\end{abstract}
\bibliographystyle{apalike}
\section{Introduction}
\label{Introduction}
Machine learning research is focused on developing methods that can effectively extract important elements from given datasets. Various learning methods has emerged, encompassing biologically inspired neural networks \citep{bishop1995neural}, ensemble models \citep{claesen2014ensembleSVM}, adversarial learning \citep{bruckner2011stackelberg,wang21d,Wang2022SolvingSP}, and reinforcement learning \citep{yang2019provably,wu2020finite}.
These methods commonly rely on a set of hyperparameters, which are adjustable parameters that configure various aspects of the learning algorithm. The choice of hyperparameters can significantly impact the resulting model and its performance, leading to a wide range of effects.

% Finding the best hyperparameters for a model is one of the hardest parts of a machine learning workflow. Regularization is proverbially applied in model fitting including regression and classification assignments, which adds regularization penalty to the empirical risk term and controls complexity. An advanced hyperparameter adaption strategy is to use a training/validation approach. It first optimizes the parameters of a model with regularization on a training set and then measures the error by computing its loss on a validation set. Mathematically, the hyperparameter selection can be formulated into the following BLP framework:
Finding the optimal hyperparameters for a machine learning model is often considered one of the most challenging aspects of the workflow. Regularization, a widely employed technique in model fitting for regression and classification tasks, involves adding a regularization penalty to the empirical risk term, thereby controlling complexity. An advanced strategy for adapting hyperparameters is to employ a training/validation approach, which entails optimizing the parameters with regularization on a training set and subsequently evaluating the performance by computing its loss on a separate validation set. Mathematically, the process of hyperparameter selection can be formulated into the following bilevel program (BLP):
\begin{equation}\label{eq1}
\begin{array}{rl}
     \min\limits_{\x\in\R^n,\blam\in\R_+^\tau}& L(\x) \\
     {\rm s.t.\quad}&~ \x\in\mathop{\arg\min}\limits_{\hat{\x}\in\R^n}\left\{l(\hat{\x})+\sum\limits_{i=1}^{\tau}\lambda_iP_i(\hat{\x})\right\},
\end{array}
\end{equation}
where $L,l,P_i:\R^n\rightarrow \R\cup\{+\infty\}$ are  proper convex closed  functions, $\x$ is the parameter to learn, and $\blam$ is a vector of hyperparameters. Note that all the functions can be \emph{nonsmooth}. In BLP \eqref{eq1}, the upper-level (UL) problem minimizes the validation error affected by the hyperparameters, and the lower-level (LL) problem aims to minimize structural risk on given training data incorporating a regularizer penalized by hyperparameters that need to be tuned. Table \cref{table1} provides some illustrative examples of bilevel hyperparameter selection problems in form \eqref{eq1}.
\subsection{Related Work}
% In the literature, there are various approaches for hyperparameter selection. The relatively simple approaches are brute force grid search and Bayesian optimization, which handle nanostructured hyperparameters and datasets and suffer from the voluminous computation. Moreover, these gradient-free methods are restricted to great limitations in the scale of parameters.
In the existing literature, various approaches have been proposed for hyperparameter selection. The simpler approaches include brute force grid search and Bayesian optimization, which handle hyperparameters and datasets of small-scale but suffer from high computational requirements. These gradient-free methods face limitations when dealing with a large number of parameters.

% Gradient-based methods are prevalent for BLPs and can be roughly divided into two categories. Explicit Gradient-Based Methods (EGBMs) \cite{franceschi2017forward,franceschi2018bilevel} use the dynamic framework on iterative algorithms to solve the LL problem. Reverse Hyper-Gradient (RHG) and Forward Hyper-Gradient (FHG) methods identify the hyper-gradient by forward and reverse computation iterations respectively.
Gradient-based methods for BLPs are popular in literature. It can be broadly categorized into two groups. Explicit Gradient-Based Methods (EGBMs) \citep{franceschi2017forward,franceschi2018bilevel} utilize dynamic frameworks and iterative algorithms to solve the LL problem. % Reverse Hyper-Gradient (RHG) and Forward Hyper-Gradient (FHG) identify the hyper-gradient through forward and reverse computation iterations, respectively.
% Implicit Gradient-Based Methods (IGBMs) \cite{pedregosa2016hyperparameter,rajeswaran2019meta,lorraine2020optimizing} utilizes the first order optimality condition for LL problem and the chain rule, which derives the hyper-gradient by solving a linear system. To alleviate the computation, the Conjugate Gradient(CG) method or the Neumann method is widely used for fast inverse computation \cite{pedregosa2016hyperparameter,lorraine2020optimizing}. \cite{liubome} introduces an approach that drops the implicit gradient and \cite{shen2023penalty} tackles the bilevel problem through the penalty method. However, these methods are limited to smooth objectives and thus inapplicable to \eqref{eq1}.
Implicit Gradient-Based Methods (IGBMs) \citep{pedregosa2016hyperparameter,rajeswaran2019meta,lorraine2020optimizing} rely on the first-order optimality condition for the LL problem and the chain rule to derive the hyper-gradient by solving a linear system. To mitigate computational complexity, techniques such as the Conjugate Gradient (CG) method or the Neumann method \citep{pedregosa2016hyperparameter,lorraine2020optimizing} are often employed for fast inverse computations. Recently, \cite{liubome} introduce an approach that drops the implicit gradient and \cite{shen2023penalty} tackle the bilevel problem through the penalty methods. However, all the above mentioned methods are only applicable to smooth LL problems, and may not be suitable for \eqref{eq1}.

% When it comes to nonsmooth functions, \cite{chen2023bilevel} introduces an inexact gradient-free method whose subproblem is a simple bilevel program, which is still hard to solve. \cite{bertrand2020implicit,bertrand2022implicit} propose implicit differentiation methods combined with block coordinate descent to solve Lasso-type models for hyperparameter optimization. Nevertheless, the computational complexity of their algorithm remains high and presents a significant challenge for solving large-scale problems. On this domain, \cite{ye2021difference} develops a new numerical algorithm named as iP-DCA for solving difference of convex bilevel program and apply to hyperparameter selection especially for support vector machine models.  \cite{gao2022value} proposes an algorithm called Value Function Based Difference-of-Convex Algorithm (VF-iDCA) to handle BLPs like \eqref{eq1} where the LL problems are equipped with complex regularization terms. However, these DCA-based methods involve a value function and require computing its sub-differential for each step.
For nonsmooth functions, \cite{bertrand2020implicit} propose a new implicit differentiation method combined with block coordinate descent to solve Lasso-type models for hyperparameter optimization. In their subsequent work \citep{bertrand2022implicit}, it is extended to solve more general non-smooth hyperparameter optimization problems. However, their methods are restricted to $l_1$ regularized LL problems, which cannot deal with general $P_i$. In the context of difference of convex bilevel programs, \cite{ye2021difference} develop a numerical algorithm called iP-DCA and applies it to hyperparameter selection, particularly in support vector machine models. \cite{gao2022value}  propose the Value Function Based Difference-of-Convex Algorithm (VF-iDCA) to handle BLPs like the one presented in equation (1), where the LL problems involve complex regularization terms. However, these DCA-based methods requires to compute the optimal value of the LL problem to obtain a subgradient, which is used to linearizes the concave term in DC constraint at each iteration.  Recently, \cite{chen2023bilevel} have introduced an inexact gradient-free method whose subproblem is a simple bilevel program, which is still difficult to solve.

\begin{table*}
  \caption{{Examples of bilevel hyperparameter selection problems of the form \eqref{eq1}, see \cite{kunapuli2008classification,feng2018gradient} for reference.}}
 \label{table1}
 \centering
 \small
 \setlength\tabcolsep{5pt}
 \resizebox{\textwidth}{12mm}{
 \begin{tabular}{lcccc}
 \toprule
 Machine learning algorithm & LL variable & UL variable & $L(\x)/l(\x)$ & Regularization\\
 \midrule
 elastic net & $\x$ & $\lambda_1,\lambda_2$ & $\frac{1}{2}\sum_{i\in I_{val}/i\in I_{tr}}|b_i-\x^T\mathbf{a}_i|^2$ & $\lambda_1\|\x\|_1+\frac{\lambda_2}{2}\|\x\|_2^2$ \\
 sparse group lasso & $\x$ & $\lambda\in\R_+^{M+1}$ & $\frac{1}{2}\sum_{i\in I_{val}/i\in I_{tr}}|b_i-\x^T\mathbf{a}_i|^2$ &  $\sum_{m=1}^M\lambda_m\|\x^{(m)}\|_2+\lambda_{M+1}\|\x\|_1$\\
  support vector machine & $\mathbf{w},c$ & $\lambda,\overline{\mathbf{w}}$ & $\sum_{j\in I_{val}/j\in I_{tr}} max(1-b_j(\mathbf{x}^T\mathbf{a}_j-c),0)$ & $\frac{\lambda}{2}\|\mathbf{x}\|^2$\ (with constraint $-\overline{\mathbf{w}}\leq\x\leq\overline{\mathbf{w}}$)\\
 low-rank matrix completion & $\mathbf{\theta},\mathbf{\beta},\Gamma$ & $\lambda\in\R_+^{2G+1}$ &  $\sum_{(i,j)\in\Omega_{val}/(i,j)\in\Omega_{tr}}\frac{1}{2}|M_{ij}-\x_i\mathbf{\theta}-\mathbf{z}_j\mathbf{\beta}-\Gamma_{ij}|$ & $\lambda_0\|\Gamma\|_*+\sum_{g=1}^G\lambda_g\|\mathbf{\theta}^{(g)}\|_2+\sum_{g=1}^G\lambda_{g+G}\|\mathbf{\beta}^{(g)}\|_2$\\
 \bottomrule
 \end{tabular}
 }\vskip -0.15in
 \end{table*}

\subsection{Our Motivations and Contributions}
% This paper provides a novel reformulation for the structured BLP, where the LL function is a sum of some atom functions. Our motivation roots in alleviating the computation (eg., the hyper-gradients) associated with the value function in BLP, and the main ingredient is Fenchel's duality. The new reformulation only requires the expression of the conjugate of each atom function, which is often more straightforward to access than the value function used in previous works \cite{bertrand2022implicit,gao2022value,pedregosa2016hyperparameter}.
This paper presents a novel single-level reformulation for the structured BLP \eqref{eq1}. %Our motivation roots in alleviating the computation (eg., the hyper-gradients) associated with the value function in BLP.
 By leveraging Fenchel's duality, the proposed reformulation only requires the expression of the conjugate of each atom function, which is often more easily accessible compared to the value function used in previous works \citep{bertrand2022implicit,gao2022value,pedregosa2016hyperparameter}.
Moreover, we introduce a Lower-level Duality based Majorization Minimization Algorithm (LDMMA) for hyperparameter selection in form \eqref{eq1}. Notably, this algorithm accommodates lower-level problems that are \emph{nonsmooth} and \emph{non-strongly convex} in $\mathbf{x}$.
% We first reformulate \eqref{eq1} into a single-level problem without involving no value function. Based on this reformulation, we propose an iterative algorithm that sequentially solves a convex subproblem, which uses majorization minimization (MM) techniques to approximate the constraint reformulated from the LL problem.
We first reformulate \eqref{eq1} into a single-level problem without involving any value function.
% However, one drawback of this convex subproblem is that it does not have an interior point and fails most convex optimization methods, because the constraint is reformulated from the LL problem. To remedy this, we add a small positive constant $\epsilon$ on the right side of that constraint in \eqref{eq1}. Then the relaxed convex approximation has interior points and can be efficiently solved by existing off-the-shelf solvers for most practical problems. Furthermore, we show that the obtained solutions sub-sequentially converge to a KKT point of an $\epsilon$-perturbation of problem \eqref{eq1} under mild conditions.
However, a drawback of this convex subproblem is that it lacks an interior point, rendering most convex optimization methods ineffective. To remedy this, a small positive constant $\epsilon$ is added to the right side of the constraint in \eqref{eq1}, leading to a relaxed convex approximation with interior points.
Based on this reformulation, we propose an iterative algorithm that sequentially solves convex subproblems using majorization minimization (MM) techniques to approximate the constraint.
Furthermore, we show that the subproblems of several widely-used hyperparameter models can be reformulated to conic programs that can be efficiently solved by the off-the-shelf solvers.
%This relaxed problem can be efficiently solved using existing off-the-shelf solvers for most practical problems.
Additionally, we demonstrate that the obtained solutions converge sub-sequentially to a Karush-Kuhn-Tucker (KKT) point of the $\epsilon$-perturbation of problem \eqref{eq1} under mild conditions.
% Finally, we show the efficiency of our method through numerical experiments. We also point out that for many problems with practical applications, our subproblem is equivalent to a conically constrained convex problem. It only involves tractable conic constraints like $l_1,l_{\infty}$ norm cone and second-order cone constraints.
Numerical experiments are conducted to showcase the efficiency of our method. %It is also highlighted that for many problems with practical applications, the subproblem is equivalent to a conically constrained convex problem.
%It only involves tractable conic constraints such as $l_1,l_\infty$ norm cone, and second-order cone constraints, which further enhances the applicability and efficiency of the method.
We summarize our contributions as follows
\begin{itemize}
    \item We provide a novel reformulation for a class of structured BLPs that is a single-level problem and do not involve value functions.%We propose a novel reformulation for a class of structured BLPs, based on which an algorithm (LLDMA) without involving any value function is established to solve hyper-parameter selection problems.
     \item Based on the reformulation and using MM techniques, we further propose an iterative algorithm, LDMMA, where the subproblem in each iteration is a convex problem. For many practical applications, the subproblem is a convex conic program.
      \item Theoretically, we prove that our algorithm generates a sequence whose accumulation points are KKT points under mild conditions.
    \item We conduct numerical experiments on both synthetic and real-world datasets and show that LDMMA exceeds the state-of-the-art.
    %\item We show the subproblems of LLDMA can be transformed into a conic programming problem for these hyperparameter selection problems. More importantly, we investigate explicit analytical form for the subproblems % of our algorithm which can be efficiently solved.
  %  \item Theoretically, we prove that the solutions of the subproblems would sub-sequentially converge to a KKT point.
   % \item We conduct numerical experiments on both synthetic and real-world datasets to show that our method exceeds the performance.
\end{itemize}

\section{Fenchel's Duality Based Reformulation for BLP}\label{sec:fenchel}
We propose a Fenchel's duality based reformulation for the following problem, which is a generalization of \eqref{eq1}
\begin{equation}\label{eq:abstract_obj}
     \min\limits_{\x\in\R^n,\blam\in\cD} f(\x,\blam) \qquad
     {\rm s.t.}~ \x\in\mathop{\arg\min}\limits_{\hat{\x}}\sum \limits_{i=0}^{\tau}g_i(\hat{\x},\blam),
\end{equation}
where $\blam$ is a vector of hyperparameters in a convex closed set $\cD$ and $f,g_i$ are proper convex closed function in $\x$ and $\blam$ but possibly non-smooth. The core idea is to replace the $\min$ operator in LL problem with $\max$ operator by invoking Fenchel's duality in the conventional value function reformulation, and then the $\max$ operator can be omitted due to the direction of the inequality. Hence we obtain an equivalent inequality constraint only involving LL functions and their conjugates. Let us begin with the following equivalent form of LL problem.
\begin{equation}\label{eq:LL1}
    \min\limits_{\x}g_0(\x,\blam)+\sum\limits_{i=1}^{\tau} g_i(\z_i,\blam)\qquad {\rm s.t.}\ \ \x=\z_i.
\end{equation}
Since $g_i,i=0,1,\ldots,\tau$ are convex and the constraints are affine, it is know that strong duality holds under Slater's condition. That is, if $\cap_{i=0}^{\tau}{\rm ri}({\rm dom}\ g_i(\cdot,\blam))\neq\emptyset$\footnote{Here, ${\rm ri}(\cdot)$ denotes the relative interior of the set $\cdot$.}, \eqref{eq:LL1} is equivalent to the following problem:
{\small
\begin{equation}\label{eq:LL2}
    -\min\limits_{\boldsymbol{\rho}_i}\max\limits_{\x,\z_i} -g_0(\x,\blam)-\sum\limits_{i=1}^{\tau}g_i(\z_i,\blam)-\sum\limits_{i=1}^{\tau}\boldsymbol{\rho}_i^T(\x-\z_i).
\end{equation}
}
Here $\boldsymbol{\rho}_i\in\R^n,i=1,\ldots,\tau$ are Lagrangian multipliers associated with constraint $\x=\z_i$, and the $\min$ and $\max$ operators have been exchanged by adding the negative signs. We define $g^*_i(\y,\blam):=\max_{\x}\ \y^T\x-g_i(\x,\blam)$ as the conjugate functions regarding $\x$ for $g_i$. We then simplify \eqref{eq:LL2} as
\begin{equation*}
    \max\limits_{\boldsymbol{\rho}_i} -g_0^*\left(-\sum\limits_{i=1}^{\tau}\boldsymbol{\rho}_i,\blam\right)-\sum\limits_{i=1}^{\tau}g^*_i(\boldsymbol{\rho}_i,\blam).
\end{equation*}
%Hence, the lower-level constraint in \eqref{eq:abstract_obj} is equivalent to
Note that the constraint of problem \eqref{eq:abstract_obj}, namely, 
$\mathbf{x}\in\arg\min_{\hat{\mathbf{x}}} \sum_{i=0}^{\tau}g_i(\hat{\mathbf{x}},\boldsymbol{\lambda})$, is equivalent to 
\[
\begin{aligned}
    \sum _{i=0}^{\tau}g _i(\mathbf{x},\boldsymbol{\lambda})&\leq \min _{\mathbf{x}}\sum _{i=0}^{\tau}g _i(\mathbf{x},\boldsymbol{\lambda})\\
    &=\max _{\boldsymbol{\rho} _i}-g _0^*\left(-\sum _{i=0}^{\tau}\boldsymbol{\rho} _i,\boldsymbol{\lambda}\right)-\sum _{i=1}^{\tau}g _i^*(\boldsymbol{\rho} _i,\boldsymbol{\lambda}).
\end{aligned}
\]
%By $\sum_{i=0}^{\tau}g_i(\mathbf{x},\boldsymbol{\lambda})\geq \min_{\mathbf{x}}\sum_{i=0}^{\tau}g_i(\mathbf{x},\boldsymbol{\lambda})$, the above inequality holds if and only if it is an equality.
We can remove the $\max$ operator and find the identical constraint that
\begin{equation*}
     \sum\limits_{i=0}^{\tau} g_i(\x,\blam)+g_0^*\left(-\sum\limits_{i=1}^{\tau}\boldsymbol{\rho}_i,\blam\right)+\sum\limits_{i=1}^{\tau}g_i^*(\boldsymbol{\rho}_i,\blam)\leq0.
\end{equation*}
The result is summarized in the following theorem.
\begin{theorem}\label{th:fenchel}
    Given convex, lower semi-continuous functions $f$ and $g_i$, if $\bigcap_{i=0}^{\tau}{\rm ri}({\rm dom}\ g_i)\neq\emptyset$, then Problem \eqref{eq:abstract_obj} has the following equivalent form:
    { 
    \begin{equation}\label{eq:fenchel}
    \begin{array}{rl}
          \min\limits_{\x,\boldsymbol{\rho}_i\in\R^n,\blam\in\cD}&f(\x,\blam) \\
        {\rm s.t.\quad \ }&\sum\limits_{i=0}^{\tau} g_i(\x,\blam)+g_0^*\left(-\sum\limits_{i=1}^{\tau}\boldsymbol{\rho}_i,\blam\right)\\
        & +\sum\limits_{i=1}^{\tau}g_i^*(\boldsymbol{\rho}_i,\blam)\leq0.
    \end{array}
    \end{equation}
    }
\end{theorem}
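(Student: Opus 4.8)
The plan is to follow the derivation sketched above the statement but to make rigorous the two steps that are easy to gloss over: the passage from the $\argmin$ constraint to a single scalar inequality, and the legitimacy of dropping the inner $\max$ by promoting the dual multipliers $\bro_i$ to genuine decision variables. I would first record that, because $\sum_{i=0}^\tau g_i(\cdot,\blam)$ is always bounded below by its own infimum, the membership $\x\in\argmin_{\hat\x}\sum_{i=0}^\tau g_i(\hat\x,\blam)$ is equivalent to the one-sided inequality $\sum_{i=0}^\tau g_i(\x,\blam)\le p^*(\blam)$, where $p^*(\blam):=\inf_{\hat\x}\sum_{i=0}^\tau g_i(\hat\x,\blam)$. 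Since the reverse inequality holds automatically, this single inequality forces equality, i.e.\ optimality.

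Next I would invoke Fenchel--Rockafellar duality on the consensus form \eqref{eq:LL1}. Under the stated qualification $\bigcap_{i=0}^\tau \mathrm{ri}(\mathrm{dom}\,g_i(\cdot,\blam))\neq\emptyset$, strong duality holds and, crucially, the dual optimum is \emph{attained}; this gives $p^*(\blam)=d^*(\blam):=\max_{\bro_i}\big[-g_0^*(-\sum_{i=1}^\tau\bro_i,\blam)-\sum_{i=1}^\tau g_i^*(\bro_i,\blam)\big]$, with the maximum achieved at some finite $\bro_i^\star$. Substituting, the lower-level constraint becomes $\sum_{i=0}^\tau g_i(\x,\blam)\le d^*(\blam)$.

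I would then prove the two inclusions between feasible sets. For the reverse direction, if some $\bro_i$ satisfies the inequality in \eqref{eq:fenchel}, then $\sum_{i=0}^\tau g_i(\x,\blam)\le -g_0^*(-\sum\bro_i,\blam)-\sum g_i^*(\bro_i,\blam)\le d^*(\blam)=p^*(\blam)$ by weak duality, so $\x$ is lower-level optimal. For the forward direction, if $\x$ is lower-level optimal then $\sum_{i=0}^\tau g_i(\x,\blam)=p^*(\blam)=d^*(\blam)$, and choosing the attained maximizer $\bro_i^\star$ makes the left-hand side of the constraint in \eqref{eq:fenchel} equal to $p^*(\blam)-d^*(\blam)=0\le 0$. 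Hence the projection of the feasible set of \eqref{eq:fenchel} onto the $(\x,\blam)$ coordinates coincides exactly with the feasible set of \eqref{eq:abstract_obj}; since $f$ does not depend on $\bro_i$, the two problems share the same optimal value and the same optimal $(\x,\blam)$.

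The main obstacle is the forward direction, and specifically \emph{dual attainment}: weak duality alone yields only the reverse-direction inequality, but to realize a lower-level-optimal $\x$ as feasible for \eqref{eq:fenchel} one needs an actual finite multiplier $\bro_i^\star$ achieving the dual value, for otherwise ``removing the $\max$'' would be valid only up to an arbitrarily small gap. This is exactly where the relative-interior condition is essential: it upgrades the equality $\sup=\inf$ to an attained $\max$. I would therefore cite the Fenchel--Rockafellar strong-duality theorem in the precise form guaranteeing dual solvability under $\bigcap_{i=0}^\tau\mathrm{ri}(\mathrm{dom}\,g_i(\cdot,\blam))\neq\emptyset$, and remark that the affineness of the consensus constraints $\x=\z_i$ is what lets the dual be written cleanly in terms of the conjugates $g_i^*$.
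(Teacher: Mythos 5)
Your proposal follows essentially the same route as the paper: rewrite the $\argmin$ constraint as the value-function inequality, pass to the Fenchel dual of the consensus form \eqref{eq:LL1}, and drop the inner $\max$ using the direction of the inequality. Your explicit treatment of dual attainment under the relative-interior condition is exactly the point the paper's derivation relies on (and glosses over), so the argument is correct and matches the paper's proof.
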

The main benefit of reformulation \eqref{eq:fenchel} is circumventing the computation of complex value functions. Instead, it reduces to calculate the conjugate of each atom function $g_i$ respectively, which has closed-form expression in many practical problems. We then demonstrate the power of this reformulation in hyperparameter selection problems.
As a straightforward application of Theorem \ref{th:fenchel}, \eqref{eq1} is equivalent to
{
\begin{equation}\label{eq6}
\begin{aligned}
     \min\limits_{\x,\boldsymbol{\rho}_i\in\R^n,\blam\in\R^{\tau}_+}&  L(\x) \\
        {\rm s.t.\quad\ \ } & F(\x,\blam,\bro)+\sum_{i=1}^{\tau} \lambda_iP_i(\x )\leq 0,
\end{aligned}
\end{equation}
}
  where we use the conventions $0P_i^*(\frac{\bro_i}{0})=0$\footnote{By definition, $\lbd_iP_i^*(\frac{\bro_i}{\lbd_i})=\max_{\z_i}\bro_i^T\z_i-\lambda_i P(\z_i)$. When $\lambda_i=0$, $\lbd_iP_i^*(\frac{\bro_i}{\lbd_i})=\max_{\z_i}\bro_i^T\z_i$ is $0$ if $\bro_i=0$ and $\infty$ otherwise. The latter case contradicts strong duality and thus is abandoned.  } { and }
  {\small
  \begin{equation}\label{eq:f}
F(\x,\blam,\bro)=l(\x)+l^*\left(-\sum\limits_{i=1}^{\tau}{\bro}_i\right)+\sum\limits_{i=1}^{\tau}\lbd_iP_i^*\left(\frac{\bro_i}{\lbd_i}\right).
\end{equation}
    }
By introducing an auxiliary variables $r_i$ satisfying $P_i(\x)\leq r_i$, since $\lbd_i\geq0$, constraint \eqref{eq6} is equivalent to
{ 
\[\begin{array}{l}
     F(\x,\blam,\bro)+\sum\limits_{i=1}^{\tau} \lambda_ir_i\leq 0,\\
    P_i(\x)\leq r_i\text{ for }i\in [\tau]. 
\end{array}\] }
      
This directly gives the following result.
\begin{proposition}\label{pro2.1}
Problem \eqref{eq1} can be reformulated as the following problem.
\begin{equation}\label{eq:ref}
    \begin{array}{lcl}
       &\min\limits_{\x,\bro_i\in\R^n,\mathbf{r},\blam\in\R^{\tau}_+} & L(\x) \\
    &{\rm s.t.} &
    F(\x,\blam,\bro) + \sum_{i=1}^{\tau}  \lambda_ir_i \leq 0,\\
    & &P_i(\x)\leq r_i,~i\in [\tau], \quad \blam\ge0.\\
    \end{array}
\end{equation}
\end{proposition}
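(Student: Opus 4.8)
The plan is to treat Proposition~\ref{pro2.1} as a routine epigraphical reformulation that decouples the bilinear coupling $\sum_{i=1}^{\tau}\lambda_i P_i(\x)$ appearing in~\eqref{eq6}. Since Theorem~\ref{th:fenchel} already certifies that~\eqref{eq1} is equivalent to~\eqref{eq6}, it suffices to establish that~\eqref{eq6} and~\eqref{eq:ref} share the same optimal value and the same set of optimal $\x$. As both problems carry the identical objective $L(\x)$, I would prove the equivalence by showing that, after introducing the slack vector $\mathbf{r}=(r_1,\dots,r_\tau)$, the two feasible regions project onto the same set of $\x$.

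First I would handle the direction from~\eqref{eq6} to~\eqref{eq:ref}. Given any $(\x,\bro,\blam)$ feasible for~\eqref{eq6}, I would simply set $r_i := P_i(\x)$ for each $i\in[\tau]$. The epigraph constraints $P_i(\x)\le r_i$ then hold with equality, while substituting into the coupled constraint gives $F(\x,\blam,\bro)+\sum_{i=1}^{\tau}\lambda_i r_i = F(\x,\blam,\bro)+\sum_{i=1}^{\tau}\lambda_i P_i(\x)\le 0$, so $(\x,\bro,\mathbf{r},\blam)$ is feasible for~\eqref{eq:ref} with the same objective $L(\x)$.

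Conversely, starting from a point $(\x,\bro,\mathbf{r},\blam)$ feasible for~\eqref{eq:ref}, the crucial observation is the monotonicity afforded by the sign constraint $\blam\ge0$: from $P_i(\x)\le r_i$ and $\lambda_i\ge0$ I obtain $\lambda_i P_i(\x)\le \lambda_i r_i$ termwise, hence $\sum_{i=1}^{\tau}\lambda_i P_i(\x)\le \sum_{i=1}^{\tau}\lambda_i r_i$. Adding $F(\x,\blam,\bro)$ to both sides and invoking the feasibility of~\eqref{eq:ref} yields $F(\x,\blam,\bro)+\sum_{i=1}^{\tau}\lambda_i P_i(\x)\le F(\x,\blam,\bro)+\sum_{i=1}^{\tau}\lambda_i r_i\le 0$, so $(\x,\bro,\blam)$ is feasible for~\eqref{eq6}. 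Since the objective is untouched in either direction, the two problems are equivalent, which combined with Theorem~\ref{th:fenchel} finishes the argument.

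The only point requiring care—rather than a genuine obstacle—is the degenerate case $\lambda_i=0$, where the term $\lambda_i P_i^*(\bro_i/\lambda_i)$ hidden inside $F$ in~\eqref{eq:f} is governed by the stated convention $0\,P_i^*(\bro_i/0)=0$. I would verify that this convention is applied consistently across both substitutions and note that the monotonicity step $\lambda_i P_i(\x)\le\lambda_i r_i$ reduces to $0\le0$ precisely when $\lambda_i=0$, so the equivalence is unaffected. Everything else is a direct arithmetic verification that preserves feasibility and objective value simultaneously.
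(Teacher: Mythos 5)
Your proposal is correct and follows essentially the same route as the paper: the paper also obtains \eqref{eq:ref} from \eqref{eq6} by introducing slack variables $r_i$ with $P_i(\x)\le r_i$ and noting that $\lambda_i\ge0$ makes the substitution an equivalence. You simply spell out the two directions (setting $r_i=P_i(\x)$ one way, using termwise monotonicity $\lambda_i P_i(\x)\le\lambda_i r_i$ the other) that the paper leaves implicit.
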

Note that the function $F(\x,\blam,\bro)$ is convex because the conjugate functions $l^*$ and $P_i^*$ are convex, and $\lbd_iP_i^*\left(\frac{\bro_i}{\lbd_i}\right)$ is convex as it is the perspective  of function $P_i^*(\bro_i)$ \citep{boyd2004convex}.
We remark that the reformulation in \cref{pro2.1} for \eqref{eq:ref} is proposed for the first time. The advantage of this reformulation is that it is a single level problem and does not involve any implicit function like the value function of the LL problem.
\section{Majorization Minimization Algorithm for Hyperparameter Selection}\label{LDMMA}
%Next we describe our algorithm for hyperparameter optimization.
In this section, we describe an algorithm that utilizes the reformulation \eqref{eq:ref}.
\subsection{Approximation via Majorization Function}
Note that the only nonconvex term in \eqref{eq:ref} is the bilinear term $\sum_{i=1}^{\tau}  \lambda_ir_i$.
We adopt a majorization and minimization technique to handle this nonconvex term \citep{lange2016mm}.
To this end, we define a majorization function as follows.
\begin{definition}\label{def:maj}
We say $m(\bullet,\bullet;\bar\xi,\bar\zeta)$ is a majorization for the bilinear form $\xi\zeta$ at $(\bar\xi,\bar\zeta)$ if it satisfies
\begin{enumerate}
  \item $m(\xi,\zeta;\bar\xi,\bar\zeta)\ge \xi\zeta$ and $m(\bar\xi,\bar\zeta;\bar\xi,\bar\zeta)= \bar \xi\bar\zeta$;
  \item $m(\xi,\zeta;\bar\xi,\bar\zeta)$ is a continuously differentiable function for $(\xi,\zeta)$, and
  $\frac{\partial m(\xi,\zeta;\bar\xi,\bar\zeta)}{\partial\xi}\mid_{(\xi,\zeta)=(\bar\xi,\bar\zeta)}=\bar\zeta$,  $\frac{\partial m(\xi,\zeta;\bar\xi,\bar\zeta)}{\partial\zeta}\mid_{(\xi,\zeta)=(\bar\xi,\bar\zeta)}=\bar\xi$;
  \item  $\frac{\partial m(\xi,\zeta;\bar\xi,\bar\zeta)}{\partial\xi}$ and $\frac{\partial m(\xi,\zeta;\bar\xi,\bar\zeta)}{\partial\zeta}$ are locally Lipschitz continuous with respect to $(\bar{\xi},\bar{\zeta})$.
\end{enumerate}
\end{definition}
There are various ways to construct such majorizations. For instance,
 when $\bar\xi,\bar\zeta>0$, we can set
 \begin{equation}\label{eq:dca_m1}
     m(\xi,\zeta;\bar\xi,\bar\zeta) = \frac{1}{2}\left(\frac{\bar\xi}{\bar\zeta}\zeta^2+\frac{\bar\zeta}{\bar\xi}\xi^2\right)
 \end{equation} by using the Cauchy inequality.
Another method is to use the identity
\[\xi\zeta=\frac{1}{4}(\xi+\zeta)^2 -\frac{1}{4}(\xi-\zeta)^2,\]
and set 
{\small
\begin{equation}\label{eq:dca_m}
m(\xi,\zeta;\bar\xi,\bar\zeta)
=\frac{1}{4}(\xi+\zeta)^2+\frac{1}{4}(\bar\xi-\bar\zeta)^2- \frac{1}{2}(\bar\xi-\bar\zeta)(\xi-\zeta)
\end{equation}
}
by linearizing the second term in the above identity at $(\bar\xi,\bar\zeta)$.

Let $m$ be a majorization of $\xi\zeta$ according to \cref{def:maj}. We now have the following inner approximation of \eqref{eq:ref} at $(\blam^k,\r^k)$,
\begin{equation}\label{eq:refsub}
    \begin{array}{cl}
       \min\limits_{\x,\blam,\bro,\r} & L(\x) \\
    {\rm s.t.} &F(\x,\blam,\bro) + \sum_{i=1}^{\tau} m(\lambda_i,r_i;\lambda_i^k,r_i^k) \leq 0,\\
    &P_i(\x)\leq r_i,~i\in [\tau],\quad \blam\ge0.
    \end{array}
\end{equation}
Traditional MM algorithms solve the convex problem \eqref{eq:refsub} iteratively.
However, we point out that the above problem does not satisfy general constraint qualifications(CQs) like the Slater condition, which
requires that there exists an interior point in the feasible region. Indeed, according to \cref{pro2.1} and item 2 of \cref{def:maj}, we obtain that $F(\x,\blam,\bro) + \sum_{i=1}^{\tau} m(\lambda_i,r_i;\lambda_i^k,r_i^k)\ge0$ for any feasible solution, and thus there does not exist any interior point.

The absence of CQ not only prevents the use of general interior point methods for efficiently solving \eqref{eq:ref} \citep{wright1999numerical}, but also makes it difficult to show the convergence of solutions by sequentially solving \eqref{eq:ref} to KKT points \citep{andreani2016cone}.
To address this, we add a small positive number $\epsilon$ to the right-hand side of the first constraint in \eqref{eq:ref}, and obtain the following approximation problem
\begin{equation}\label{eq:subeps}
 \begin{array}{rl}
       \min\limits_{\x,\blam,\bro,\r} & L(\x) +\frac{\beta}{2}\left\|\left(\x-\x^k,\blam-\blam^k,\r-\r^k,\bro-\bro^k \right) \right\|^2\\
    {\rm s.t.} &F(\x,\blam,\bro) + \sum_{i=1}^{\tau} m(\lambda_i,r_i;\lambda_i^k,r_i^k) \leq \epsilon,\\
     &P_i(\x)\leq r_i,~i\in [\tau], \quad \blam\geq0.
    \end{array}
\end{equation}
Here we also add a proximal term in the objective function to ensure the convergence of our algorithm.
We summarise our method in  \cref{alg:1}.
We remark that line 1 of \cref{alg:1} helps us find a feasible solution for problem \eqref{eq:subeps}, which guarantees the feasibility of problem \eqref{eq:blpeps}, thanks to \cref{def:maj}.

% For $j=1,2,3$, $\{\x :i\in J_j\}$ exactly composes $\x$.
\begin{algorithm}[htbp]
\caption{Lower-level Dual based Majorization Minimization algorithm (LDMMA)}
\label{alg:1}
\begin{algorithmic}[1]
\REQUIRE initial $\epsilon>0,\beta>0$, and ${\boldsymbol{\lambda}}^0\ge 0$.

\STATE Solve the lower-level subproblem $\min_\x l(\x)+\sum_{i=1}^{\tau}\lambda_i^0 P_i(\x)$ and set $r_i^0=P_i(\x)$, $i=1,2,\ldots,\tau$
%\STATE compute conjugate functions $l^*$ and $P_i^*$
\FOR{$k=0,1,\dots,$}
    % \STATE $\epsilon=\epsilon/u^k$;
    \STATE Solve problem \eqref{eq:subeps} and obtain an optimal solution $(\x^{k+1},\mathbf{r}^{k+1},{\boldsymbol{\lambda}}^{k+1},\bro^{k+1})$
    \IF{Termination criteria is met}
    \STATE Stop
    \ENDIF
  %  \STATE set $\r^0=\r^k,\blam^0=\blam^k,\bro^0=\bro^k$
    %\STATE solve the upper-level problems without lower-level constraints and obtain $x^*$
%    \STATE if $\|(\x^{k+1},\boldsymbol{\lambda}^{k+1})-(\x^{k},{\boldsymbol{\lambda}}^{k})\|<tol$, stop
\ENDFOR
\end{algorithmic}
\end{algorithm}
\subsection{Conic Formulations of Subproblems}
We point out that for all hyperparameter selection problems in Table \ref{table1},
the subproblems of \eqref{eq:refsub} or \eqref{eq:subeps} have explicit conic convex formulations, which can be solved by existing off-the-shelf solvers efficiently. 

Here, we give an example of the elastic net problem. { Other problems in Table \ref{table1} admit similar conic reformulations. We note that the full row rank condition is not necessary for the conic reformulation. Without such a condition, we can still obtain a conic program for the subproblem but with one extra linear constraint.  See \cref{sec:a} for proofs, remarks, and more details on other problems. }
\begin{proposition}\label{pro:elastic_net}
   Consider the elastic net problem with training data  $A_{tr},\b_{tr}$ and validation data $A_{val},\b_{val}$,
\begin{equation*}\label{eq:elastic}
    \begin{array}{rl}
         \min\limits_{\x} & L(\x) = \frac12\|A_{val}\x-\b_{val}\|_2^2
         \\
         {\rm s.t.} & \x \in{\argmin}\frac12\|A_{tr}\x-\b_{tr}\|_2^2 +\lambda_1\|\x\|_1+\frac{\lambda_2}{2}\|\x\|_2^2.
    \end{array}
\end{equation*}
        If $A_{tr}$ is of full row rank, then using \eqref{eq:dca_m1} or \eqref{eq:dca_m}, we obtain that the subproblem \eqref{eq:refsub}  for the above problem can be reformulated into the following conic program:
     \begin{equation*}
    \begin{array}{lcl}
     &\min\limits_{\x,\blam,\bro,\r,t} & t\\
    &{\rm s.t.}  &  A_{tr}^T\w+ {\bro}_1+\bro_2=\bz,\\
  & & \|\x\|_1\leq r_1,\ \|\bro_1\|_{\infty}\leq\lambda_1,\\
   & &  SOCs(\x,\blam,\bro,\r,t),
    \end{array}
\end{equation*}
 where $SOCs(\x,\blam,\bro,\r,t)$ {  represents second-order cone constraints with variables} $ (\x,\blam,\bro,\r,t)$.
\end{proposition}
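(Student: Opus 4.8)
The plan is to specialize the generic subproblem \eqref{eq:refsub} to the elastic net by reading off $l(\x)=\frac12\|A_{tr}\x-\b_{tr}\|_2^2$, $P_1(\x)=\|\x\|_1$, and $P_2(\x)=\frac12\|\x\|_2^2$ (so $\tau=2$), and then to rewrite every convex quadratic or norm term of the resulting problem as a (rotated) second-order cone constraint after introducing an epigraph variable $t$ for the objective. The key analytic input is the explicit evaluation of the three conjugates that enter $F$ in \eqref{eq:f}.

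First I would handle the two regularizers. Since $P_1^*$ is the indicator of the unit $\ell_\infty$-ball, the convention below \eqref{eq6} gives $\lambda_1P_1^*(\bro_1/\lambda_1)=0$ exactly when $\|\bro_1\|_\infty\le\lambda_1$ (and $+\infty$ otherwise), producing the linear constraint $\|\bro_1\|_\infty\le\lambda_1$. Likewise $P_2^*(\y)=\frac12\|\y\|_2^2$, so $\lambda_2P_2^*(\bro_2/\lambda_2)=\frac{1}{2\lambda_2}\|\bro_2\|_2^2$, a quadratic-over-linear term representable by a rotated SOC in $(\bro_2,\lambda_2)$. The epigraph constraints $P_i(\x)\le r_i$ become $\|\x\|_1\le r_1$ and $\frac12\|\x\|_2^2\le r_2$, the latter again a rotated SOC.

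The crux is the conjugate $l^*$ of the data-fitting term, and this is where the full row rank hypothesis enters. Writing $\y=-\bro_1-\bro_2$, one has $l^*(\y)=\sup_\x\{\y^T\x-\frac12\|A_{tr}\x-\b_{tr}\|_2^2\}$, which is finite iff $\y\in\operatorname{range}(A_{tr}^T)$; I would encode this membership by introducing $\w$ with $A_{tr}^T\w+\bro_1+\bro_2=\bz$, exactly the linear equality in the claimed conic program. When $A_{tr}$ has full row rank, $A_{tr}^T$ is injective, so $\w$ is uniquely determined, and maximizing over the residual $A_{tr}\x-\b_{tr}$ (which then ranges over all of its ambient space) yields the closed form $l^*(-\bro_1-\bro_2)=\frac12\|\w\|_2^2+\w^T\b_{tr}$. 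I expect this conjugate computation — establishing both the range/equality condition and the clean quadratic value, and pinpointing precisely where injectivity of $A_{tr}^T$ is invoked — to be the main obstacle; the remark's "one extra linear constraint" in the rank-deficient case reflects the additional work of encoding $\y\in\operatorname{range}(A_{tr}^T)$ and correcting the value when $\w$ is non-unique.

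Finally I would assemble the single scalar constraint $F+\sum_{i=1}^2 m(\lambda_i,r_i;\lambda_i^k,r_i^k)\le 0$. After the substitutions above, its left-hand side is a sum of convex quadratics in $(\x,\w,\bro_2)$ plus the majorization terms, which under either \eqref{eq:dca_m1} or \eqref{eq:dca_m} are themselves convex quadratics in $(\lambda_i,r_i)$ (a sum of squares, respectively a single square $(\lambda_i+r_i)^2$ plus affine terms). Bounding each convex-quadratic summand above by one auxiliary variable turns every such bound into a rotated SOC, while the remaining affine parts together with these auxiliary variables are forced $\le 0$ by a single linear inequality; combined with the SOC epigraph $\frac12\|A_{val}\x-\b_{val}\|_2^2\le t$ for the objective, this yields precisely the conic program in the statement, where $SOCs(\x,\blam,\bro,\r,t)$ collects all the second-order cone constraints just described.
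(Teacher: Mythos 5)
Your proposal is correct and follows essentially the same route as the paper: it evaluates the conjugates $P_1^*$, $P_2^*$, and $l^*$, uses full row rank exactly as the paper does (to force $-\bro_1-\bro_2\in\operatorname{range}(A_{tr}^T)$ via the equality $A_{tr}^T\w+\bro_1+\bro_2=\bz$ and to obtain $l^*=\frac12\|\w+\b_{tr}\|_2^2-\frac12\|\b_{tr}\|_2^2$), and then conifies the remaining quadratics. The only difference is cosmetic: the paper aggregates all quadratic terms of the main constraint into a single second-order cone by completing the square in $\|\b_{tr}\|_2$, whereas you split each quadratic into its own rotated cone plus one linear inequality; both satisfy the statement.
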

{  Equipped with the conic reformulations for the subproblems, one can take clear and concrete steps to apply Algorithm \ref{alg:1} to solve the hyperparameter selection problems. This enhances the implementability of the proposed algorithm, making it practical for real-world applications.

Before we end this section,
we  would like to emphasize the differences between our approach and the duality-based method in \cite{Ouattara2016DualityAT}. First, the main novelty of \cite{Ouattara2016DualityAT} is to use the Lagrangian duality of the LL problem to deal with the constraints of LL problems, while we focus on Fenchel’s duality for unconstrained LL problems. Second, the duality approach in \cite{Ouattara2016DualityAT} still necessitate the calculation of an abstract value function $h$. In contrast, we utilize the splitting structures to obtain a reformulation that only consists of primal atom functions and their conjugates. Our approach circumvents the computation of complex value functions. Third, our reformulation leads to implementable subproblems in the form of conic programs for many problems of interest while that of \cite{Ouattara2016DualityAT} does not. }

%In the next section, we will show that the proposed algorithm will generate a sequence whose accumulation points satisfy KKT conditions of the following $\epsilon$-perturbed problem,

\section{Theoretical Investigations}\label{sec:th}
In this section, we show that the sequence generated by \cref{alg:1} { converges to a KKT point of the} $\epsilon$-approximate problem 
\begin{equation}\label{eq:blpeps}
\begin{array}{rl}
       \min\limits_{\x,\blam,\bro,\r}& L(\x)  \\
    {\rm s.t.} & F(\x,\blam,\bro) + \sum_{i=1}^{\tau} \lambda_ir_i \leq \epsilon,\\
    & P_i(\x)\leq r_i,~i\in [\tau], \quad \blam\geq0.
\end{array} 
\end{equation}
Note that in the above problem, we add a positive $\epsilon$ to the LL constraint. We remark that similar techniques are widely used in value function approaches in the literature \cite{liu21o,ye2022difference}. 

We begin with formal definitions of the KKT point and a nonsmooth CQ. Let $N_{\cX}(\x)$ denote the normal cone of the set $\cX$ and $\partial \varphi$ denote the limiting sub-differential of the function $\varphi$ \citep{rockafellar2009variational}.
\begin{definition}
For a constrained optimization
\begin{equation}\label{eq:std}
       \min\limits_{\x\in\cX}  \hat f(\x) \qquad
    {\rm s.t.} ~\hat  h_i(\x)\leq0,\quad i=1,2,\ldots,m,
\end{equation}
%\begin{equation}\label{eq:std}
%    \begin{array}{lcl}
%       &\min\limits_{\x\in\cX} & \hat f(\x) \\
%    &{\rm s.t.} &\hat  h_i(\x)\leq0,\quad i=1,2,\ldots,m, \\
%    \end{array}
%\end{equation}
we say that $\x^*$ is its KKT point if there exists $\bmu^*\in\R^+$ such that $\mu_i^*\hat h_i(\x^*)=0$, $\hat h_i(\x^*)\leq0$ and
\[\bz\in \partial\hat  f(\x^*)+\sum_{i=1}^m\mu_i^*\partial\hat  h_i(\x^*)+N_{\cX}(\x^*).\]
\end{definition}
The following CQ is the nonsmooth version of the MFCQ that is frequently used for many algorithms.
\begin{definition}[\cite{jourani1994constraint,ye2022difference}]
    Let $\x^*$ be a feasible point of \eqref{eq:std}. We say that the nonzero
abnormal multiplier constraint qualification (NNAMCQ) holds at $\x^*$ for problem \eqref{eq:std} if $\hat h_i(\x^*)<0$ for $i\in[m]$ or $\bz\notin$
{\small
\[ \left\{\sum_{i=1}^m\mu_i\partial \hat h_i(\x^*)+N_{\cX}(\x^*):\mu_i\hat h_i(\x^*)=0,\mu_i\geq0,\bmu\neq\bz\right\}.\]
}
\end{definition}
%We have the following lemma for the above CQ before giving our convergence analysis.
\begin{lemma}[NNAMCQ]\label{le:cq} {\rm (i)}  Let $\x^*$ be a solution of \eqref{eq:std}. If NNAMCQ holds at $\x^*$, then $\x^*$ is a KKT point of problem \eqref{eq:std}.
  {\rm (ii)} NNAMCQ holds at any feasible point for problem \eqref{eq:blpeps}.
\end{lemma}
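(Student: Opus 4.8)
The plan is to treat the two parts separately, dispatching (i) by a standard Fritz--John argument and reserving the real work for (ii).

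For part (i), I would invoke the generalized Fritz--John necessary conditions for the nonsmooth program \eqref{eq:std} (available since the data functions are lower semicontinuous and $\cX$ is closed; see \citep{rockafellar2009variational}): at the solution $\x^*$ there exist multipliers $\mu_0,\mu_1,\dots,\mu_m\ge0$, not all zero, with $\mu_i\hat h_i(\x^*)=0$ and $\bz\in\mu_0\partial\hat f(\x^*)+\sum_{i=1}^m\mu_i\partial\hat h_i(\x^*)+N_{\cX}(\x^*)$. The only thing to exclude is the abnormal case $\mu_0=0$. If $\mu_0=0$, then $(\mu_1,\dots,\mu_m)\neq\bz$, and the inclusion together with complementarity places $\bz$ inside the set forbidden by NNAMCQ, a contradiction. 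Hence $\mu_0>0$, and rescaling by $1/\mu_0$ and setting $\mu_i^*=\mu_i/\mu_0$ produces a KKT point.

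For part (ii), I would label the constraints of \eqref{eq:blpeps} as $H_0(\x,\blam,\bro,\r):=F(\x,\blam,\bro)+\sum_{i=1}^\tau\lambda_ir_i-\epsilon$ and $H_i(\x,\r):=P_i(\x)-r_i$ for $i\in[\tau]$, with the set constraint $\cX=\{\blam\ge0\}$, and argue by contradiction that no nonzero abnormal multiplier $\bnu=(\nu_0,\dots,\nu_\tau)\ge\bz$ can satisfy complementarity and $\bz\in\nu_0\partial H_0+\sum_{i=1}^\tau\nu_i\partial H_i+N_{\cX}$. The first key step is to read off the $\r$-block of this inclusion: since $H_0$ depends on $r_j$ only through the smooth bilinear term $\lambda_jr_j$ and $H_j$ only through $-r_j$ (and $N_{\cX}$ has no $\r$-component), the $r_j$ equation gives $\nu_j=\nu_0\lambda_j$ for every $j$. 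If $\nu_0=0$ this forces $\bnu=\bz$, a contradiction, so I may assume $\nu_0>0$.

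The crux is the case $\nu_0>0$, where I would extract the remaining blocks and invoke lower-level strong duality. Complementarity gives $H_0=0$, i.e.\ $F+\sum_i\lambda_ir_i=\epsilon$, while $\nu_iH_i=0$ with $\nu_i=\nu_0\lambda_i$ gives $\lambda_ir_i=\lambda_iP_i(\x)$; together these yield $F(\x,\blam,\bro)+\sum_i\lambda_iP_i(\x)=\epsilon$. On the other hand, the $\x$-block reads $\bz\in\partial l(\x)+\sum_i\lambda_i\partial P_i(\x)$, so $\x$ solves the lower-level problem $\min_{\hat\x}l(\hat\x)+\sum_i\lambda_iP_i(\hat\x)$ with primal value $p^*$; and the $\bro$-block, after dividing by $\nu_0$, is exactly the stationarity condition for the Fenchel dual, so $\bro$ is dual-optimal with dual value $d^*$. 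Reading off from \eqref{eq:f} that $F+\sum_i\lambda_iP_i(\x)$ splits as the primal objective at $\x$ plus the negative dual objective at $\bro$, strong duality $p^*=d^*$ (which holds under the standing relative-interior assumption of \cref{th:fenchel}) forces $F+\sum_i\lambda_iP_i(\x)=p^*-d^*=0$, contradicting the value $\epsilon>0$ obtained above. This contradiction is precisely where the perturbation $\epsilon$ earns its keep.

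The main obstacle, and the step I would be most careful with, is the block decomposition of the stationarity inclusion: $H_0$ is not jointly convex because of the bilinear term, so I must justify separating $\partial H_0$ into its partial pieces. This is legitimate because the bilinear term is smooth, so the limiting-subdifferential sum rule applies and leaves the convex function $F$ to contribute its partial subdifferentials. I would also treat the degenerate indices with $\lambda_i=0$ via the convention $0\,P_i^*(\bro_i/0)=0$ (which, as noted after \eqref{eq:f}, forces $\bro_i=\bz$ at feasible points), and confirm that the $\bro$-block is genuinely the dual stationarity condition, i.e.\ that the factor $\lambda_i$ cancels in $\partial_{\bro_i}[\lambda_iP_i^*(\bro_i/\lambda_i)]=\partial P_i^*(\bro_i/\lambda_i)$. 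Everything else is routine.
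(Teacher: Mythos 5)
Your proof is correct and follows essentially the same route as the paper's: part (i) via a Fritz--John argument with NNAMCQ excluding the abnormal multiplier (the paper simply cites \cite{ye2022difference}), and part (ii) by contradiction, using the $r_i$-structure to kill the case $\nu_0=0$ and lower-level strong duality to show that an abnormal stationary point with active constraint would force $F+\sum_i\lambda_i r_i=0$, contradicting the value $\epsilon>0$. Your block-by-block decomposition of the stationarity inclusion spells out details the paper leaves implicit, but the skeleton of the argument is identical.
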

Let $\z^k:=(\x^k,\blam^k,\r^k,\bro^k)$ be the $k$-th iteration point of Algorithm \ref{alg:1}. We use the following notations for the concerned problem  \eqref{eq:blpeps} and its subproblem \eqref{eq:subeps}:
    {
    \[
    \begin{array}{rl}
         \cX&=\{(\x,\blam,\r,\bro):\bro=(\bro_1,\ldots,\bro_{\tau}), \blam,\r\geq\bz\},  \\
          \z&=(\x,\blam,\r,\bro)\in\cX,\\
          f(\z)&=L(\x),\\
          f^k(\z) &=L(\x)+\frac{\beta}{2}(\|\r-\r^k\|^2+\|\blam-\blam^k\|^2), \\
         g (\z)&= F(\x,\blam,\bro) + \sum_{i=1}^{\tau}\lambda_ir_i-\epsilon,\\
         \bar g^k (\z)&= F(\x,\blam,\bro) + \sum_{i=1}^{\tau} m(\lambda_i,r_i;\lambda_i^k,r_i^k)-\epsilon,\\
         h_j(\z)&=P_i(\x)-r_i,\ i=1,2,\ldots,\tau.\\
    \end{array}
    \]}
   By the definition of $m$ in \cref{def:maj}, the following lemma naturally holds.
    \begin{lemma}\label{le:direct}For $k=0,1,2,\dots$,
we have the following results: {\rm (i)} $\bar g^k(\z)\geq g (\z)$ and $\bar g^k (\z^k)= g (\z^k)$; {\rm (ii)} $\partial \bar g^k (\z^k)=\partial g (\z^k)$.
%\begin{enumerate}[label={{\rm (\roman*)}}]   \item $\bar g^k(\z)\geq g (\z)$ and $\bar g^k (\z^k)= g (\z^k)$;\item $\partial \bar g^k (\z^k)=\partial g (\z^k)$.\end{enumerate}
    \end{lemma}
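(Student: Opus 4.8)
The plan is to prove both parts of Lemma~\ref{le:direct} directly from the defining properties of the majorization function $m$ in Definition~\ref{def:maj}, comparing the two constraint functions $g(\z)$ and $\bar g^k(\z)$ term by term. Recall that they differ only in the replacement of the bilinear terms $\lambda_i r_i$ by their majorizations $m(\lambda_i,r_i;\lambda_i^k,r_i^k)$; the remaining piece $F(\x,\blam,\bro)-\epsilon$ is common to both. So everything reduces to the scalar-by-scalar comparison of $\lambda_i r_i$ with $m(\lambda_i,r_i;\lambda_i^k,r_i^k)$.

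For part (i), I would first write $\bar g^k(\z)-g(\z)=\sum_{i=1}^{\tau}\bigl(m(\lambda_i,r_i;\lambda_i^k,r_i^k)-\lambda_i r_i\bigr)$. By item~1 of Definition~\ref{def:maj}, each summand satisfies $m(\lambda_i,r_i;\lambda_i^k,r_i^k)\ge\lambda_i r_i$, so the sum is nonnegative, giving $\bar g^k(\z)\ge g(\z)$ for all $\z$. Evaluating at $\z=\z^k$, i.e.\ at $(\lambda_i,r_i)=(\lambda_i^k,r_i^k)$, the same item~1 gives the tightness $m(\lambda_i^k,r_i^k;\lambda_i^k,r_i^k)=\lambda_i^k r_i^k$, so every summand vanishes and $\bar g^k(\z^k)=g(\z^k)$.

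For part (ii), the goal is $\partial\bar g^k(\z^k)=\partial g(\z^k)$. The function $F(\x,\blam,\bro)-\epsilon$ is common and convex, and the only difference lies in the separable additive terms in $(\blam,\r)$. Since $m(\cdot,\cdot;\lambda_i^k,r_i^k)$ is continuously differentiable by item~2 of Definition~\ref{def:maj}, I would invoke the sum rule for the limiting subdifferential: adding a $C^1$ function to $\varphi$ shifts the subdifferential by its gradient, so it suffices to show that at $\z^k$ the gradient of $\sum_i m(\lambda_i,r_i;\lambda_i^k,r_i^k)$ equals the (sub)gradient contribution of $\sum_i\lambda_i r_i$. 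The bilinear term $\sum_i\lambda_i r_i$ is smooth with gradient components $r_i$ (in $\lambda_i$) and $\lambda_i$ (in $r_i$), evaluated at $\z^k$ giving $r_i^k$ and $\lambda_i^k$. Item~2 of Definition~\ref{def:maj} asserts exactly $\partial m/\partial\xi|_{(\bar\xi,\bar\zeta)}=\bar\zeta$ and $\partial m/\partial\zeta|_{(\bar\xi,\bar\zeta)}=\bar\xi$, which here reads $\partial m/\partial\lambda_i=r_i^k$ and $\partial m/\partial r_i=\lambda_i^k$ at $(\lambda_i^k,r_i^k)$. Hence the two smooth pieces have identical gradients at $\z^k$, and after the sum rule the subdifferentials coincide.

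The only genuine subtlety is the handling of the subdifferential calculus: one must ensure the sum rule applies so that the common nonsmooth part $F$ (together with the normal-cone/indicator structure of $\cX$ carried implicitly) does not interfere. Since the differing terms are $C^1$, the exact (equality) sum rule for limiting subdifferentials holds, so no inclusion is lost—this is what makes the matching gradients at $\z^k$ upgrade to full equality of subdifferentials rather than a mere inclusion. I expect this to be the main point requiring care, though it is routine given differentiability of $m$; the rest is immediate from Definition~\ref{def:maj}.
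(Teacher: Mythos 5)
Your proposal is correct and is exactly the argument the paper intends: the paper simply asserts that the lemma ``naturally holds'' by Definition~\ref{def:maj}, and your write-up supplies the details in the expected way --- item~1 gives (i), and item~2 together with the exact sum rule for the limiting subdifferential with a $C^1$ addend (Exercise~8.8(c) in \cite{rockafellar2009variational}, which the paper invokes elsewhere) gives (ii). No gaps.
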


    We then introduce a sufficient decrease property of Algorithm  \ref{alg:1}.
\begin{lemma}\label{le:suff}
    Assume $L(\x)$ is bounded below. Then for all $k\in\N$, we have {\rm (i)} $L(\x^{k+1})-L(\x^{k})\leq -\frac{\beta}{2}\|\z^{k+1}-\z^k\|^2$; { \rm (ii)}  $\lim_{k\rightarrow\infty}\|\z^{k+1}-\z^k\|=0.$
%\begin{enumerate}[label={{\rm (\roman*)}}]   \item $L(\x^{k+1})-L(\x^{k})\leq -\frac{\beta}{2}\|\z^{k+1}-\z^k\|^2$; \item  $\lim_{k\rightarrow\infty}\|\z^{k+1}-\z^k\|=0.$ \end{enumerate}
\end{lemma}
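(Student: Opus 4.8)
The plan is to recognize this as a standard sufficient-decrease argument for a proximal majorization-minimization scheme, whose crux is that the previous iterate $\z^k$ is itself feasible for the subproblem \eqref{eq:subeps} solved at step $k$. The first and most delicate step is to propagate feasibility. By construction the optimal solution $\z^{k+1}$ of \eqref{eq:subeps} satisfies the majorized constraint $\bar g^k(\z^{k+1})\le 0$ together with $P_i(\x^{k+1})\le r_i^{k+1}$ and $\blam^{k+1}\ge 0$; invoking \cref{le:direct}(i), which gives $g(\z^{k+1})\le\bar g^k(\z^{k+1})$, we conclude $g(\z^{k+1})\le 0$, so $\z^{k+1}$ is feasible for the $\epsilon$-approximate problem \eqref{eq:blpeps}. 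An induction then shows every $\z^k$ is feasible for \eqref{eq:blpeps}, the base case $k=0$ being supplied by line 1 of \cref{alg:1} (strong duality makes the initial constraint value equal $-\epsilon\le 0$). Conversely, since $\bar g^k(\z^k)=g(\z^k)\le 0$ again by \cref{le:direct}(i), and since $P_i(\x^k)\le r_i^k$, $\blam^k\ge 0$ are inherited from the previous subproblem, $\z^k$ is feasible for the subproblem \eqref{eq:subeps} solved at iteration $k$.

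With feasibility of $\z^k$ in hand, part (i) follows from optimality. Because $\z^{k+1}$ minimizes the objective of \eqref{eq:subeps} over its feasible set and $\z^k$ belongs to that set, comparing objective values yields
\[
L(\x^{k+1})+\frac{\beta}{2}\|\z^{k+1}-\z^k\|^2 \le L(\x^k)+\frac{\beta}{2}\|\z^k-\z^k\|^2 = L(\x^k),
\]
and rearranging gives exactly $L(\x^{k+1})-L(\x^k)\le-\frac{\beta}{2}\|\z^{k+1}-\z^k\|^2$.

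For part (ii) I would telescope (i) over $k=0,\dots,N$, which gives $\frac{\beta}{2}\sum_{k=0}^{N}\|\z^{k+1}-\z^k\|^2\le L(\x^0)-L(\x^{N+1})$. Since $L$ is bounded below, the right-hand side stays uniformly bounded as $N\to\infty$, so the nonnegative series $\sum_{k=0}^{\infty}\|\z^{k+1}-\z^k\|^2$ converges; its general term must then tend to zero, yielding $\lim_{k\to\infty}\|\z^{k+1}-\z^k\|=0$. The main obstacle is the feasibility-propagation step, where one must track the two roles of \cref{le:direct}(i): the equality $\bar g^k(\z^k)=g(\z^k)$ keeps $\z^k$ feasible for the next subproblem, while the majorization inequality $\bar g^k\ge g$ is what transfers feasibility of $\z^{k+1}$ back to \eqref{eq:blpeps}. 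The remainder is a routine monotone-sequence argument, and the proximal term in \eqref{eq:subeps} is essential precisely because it forces the quadratic gap $\frac{\beta}{2}\|\z^{k+1}-\z^k\|^2$ into the descent, without which (ii) could not be concluded.
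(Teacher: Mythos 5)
Your proposal is correct and follows essentially the same route as the paper's proof: an induction on feasibility of $\z^k$ for the subproblem \eqref{eq:subeps} using both the inequality and equality parts of \cref{le:direct}, followed by the optimal-versus-feasible objective comparison for (i) and a telescoping/bounded-below argument for (ii). The only difference is that you spell out the base case via strong duality and the summability argument, which the paper leaves implicit.
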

\begin{theorem}\label{th:main}
Assume $L(\x)$ is bounded below and $\{\z^k\}_{k\in\N}$ is bounded. The following two statements hold:
\begin{enumerate}[label={{\rm (\roman*)}}]
    \item 
    {  If $\epsilon>0$ in \eqref{eq:blpeps}, then any accumulation point of $\{\z^k\}_{k\in\N}$ is a KKT point of \eqref{eq:blpeps};}
\item  Furthermore, if $L(\x),l(\x)$, and $P_i(\x)$, $i\in[\tau]$ are semi-algebraic functions, then $\{\z^k\}_{k\in\N}$ converges to a KKT point of \eqref{eq:blpeps}.
\end{enumerate}

%the following problem
%\begin{equation} \label{eq:blpeps}
%    \begin{array}{lcl}
%       &\min & L(\x) \\
%    &{\rm s.t.} &
%    F(\x,\blam,\bro) + \sum_{i=1}^{\tau}  \lambda_ir_i \leq \epsilon,\\
%  &&  P_i(\x)\leq r_i,~i\in J, \quad \blam\ge0.\\
%    \end{array}
%\end{equation}
\end{theorem}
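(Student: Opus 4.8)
The plan is to prove the two statements of \cref{th:main} separately, building on the sufficient decrease property of \cref{le:suff} and the majorization identities of \cref{le:direct}. For statement (i), I would first use boundedness of $\{\z^k\}$ to extract a convergent subsequence $\z^{k_j}\to\z^*$, and from \cref{le:suff}(ii) conclude that $\z^{k_j+1}\to\z^*$ as well. The key observation is that each iterate $\z^{k+1}$ is an optimal solution of the convex subproblem \eqref{eq:subeps}, so it satisfies the KKT conditions of that subproblem. Since \cref{def:maj} guarantees the majorization $\bar g^k$ is convex and, crucially, has an interior point after the $\epsilon$-perturbation, the subproblem obeys a constraint qualification and the KKT system holds with some multipliers. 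I would write out these stationarity conditions explicitly: $\bz$ lies in the subdifferential of the (proximal-regularized) objective plus nonnegative combinations of the subdifferentials of $\bar g^k$ and the $h_i$, together with the normal cone $N_{\cX}$.

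The central step is then a limiting argument. I would pass to the limit along the subsequence in the KKT system of \eqref{eq:subeps}. The proximal term $\frac{\beta}{2}\|\z-\z^k\|^2$ contributes a gradient $\beta(\z^{k+1}-\z^k)$, which vanishes in the limit by \cref{le:suff}(ii); this is precisely why the proximal term was added. Using \cref{le:direct}(ii), namely $\partial\bar g^{k_j}(\z^{k_j+1})$ relates to $\partial g$ at the limit, and invoking outer semicontinuity (closedness) of the limiting subdifferentials and normal cones \citep{rockafellar2009variational}, I would show the limiting multipliers satisfy the KKT conditions of the target problem \eqref{eq:blpeps}. Complementary slackness would be preserved by continuity of the constraint functions. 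To guarantee the multiplier sequence itself admits a convergent subsequence, I would invoke \cref{le:cq}(ii): since NNAMCQ holds at every feasible point of \eqref{eq:blpeps}, the multipliers cannot escape to infinity (otherwise normalizing them would produce a nonzero abnormal multiplier, contradicting NNAMCQ), so they are bounded and a further subsequence converges.

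For statement (ii), the plan is to upgrade subsequential convergence to convergence of the whole sequence under the semi-algebraicity hypothesis. The standard route is a Kurdyka--\L{}ojasiewicz (KL) argument: semi-algebraic functions satisfy the KL inequality, and this property is inherited by the relevant merit or Lyapunov function built from $L$ together with the constraint data. I would first establish that the semi-algebraic assumptions on $L$, $l$, and the $P_i$ make an appropriate value function of the iterates a KL function. Combining the sufficient decrease from \cref{le:suff}(i), a relative-error bound obtained from the subproblem optimality conditions (again exploiting the vanishing proximal gradient), and the KL inequality yields finite length of the sequence $\sum_k\|\z^{k+1}-\z^k\|<\infty$, which forces $\{\z^k\}$ to be Cauchy and hence convergent. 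Its limit is an accumulation point, so by part (i) it is a KKT point of \eqref{eq:blpeps}.

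The main obstacle I anticipate is the boundedness of the multiplier sequence in the limiting argument for (i): passing subdifferentials to the limit is routine given outer semicontinuity, but controlling the dual variables requires carefully exploiting NNAMCQ for \eqref{eq:blpeps} and the near-feasibility of the iterates (they are feasible for the majorized constraint, which dominates the true constraint by \cref{le:direct}(i)). A secondary technical difficulty in (ii) is verifying the relative-error condition in the abstract KL framework, since the constraints are nonsmooth and the iterates are defined only implicitly as subproblem minimizers rather than by an explicit proximal map; I would need to extract the error bound from the subproblem KKT residual rather than from a gradient step.
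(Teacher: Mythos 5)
Your proposal matches the paper's proof in all essentials: part (i) is handled by the same subsequence extraction, subproblem KKT conditions, multiplier boundedness via a normalization-contradiction against NNAMCQ (\cref{le:cq}), and outer semicontinuity of limiting subdifferentials; part (ii) follows the same KL route via a merit function built from $L$ and indicator functions of the constraint sets, with sufficient decrease from \cref{le:suff} and a relative-error bound extracted from the subproblem KKT residual (the paper controls this term using the local Lipschitz continuity of $\nabla m$ from item 3 of \cref{def:maj}, which is the ingredient you would need for the step you flag as the secondary difficulty). The approach and key lemmas are the same, so no substantive comparison is needed.
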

The $\epsilon$ perturbation in \eqref{eq:blpeps} is essential, which can be found in many value function based BLP algorithms \citep{ye2022difference,gao2022value,Xu2014ASA}. We suggest referring to \cite{Xu2014ASA,ye2022difference} for the analysis of this relaxation. We remark that boundedness assumptions on $\{\z^k\}_{k\in\N}$ are widely used in relevant literature; see \cite{ye2022difference} and \cite{gao2022value}. We argue its necessity by referring to Theorem 4.2 in \cite{attouch2013convergence}, a well-known convergence result requiring very mild conditions but needs the boundedness of the iterate sequence.
We also remark that the convergence of our algorithm does not require the lower-level problem to be strongly convex, unlike many existing methods for BLP \citep{feng2018gradient,pedregosa2016hyperparameter}.

% Finally, $\z^*\in{\rm ri}\cX$ is only used to ensure $\blam^*>0$ and thus $\sum\limits_{i=1}^{\tau}\lbd_iP_i^*\left(\frac{\bro_i}{\lbd_i}\right)$ in \eqref{eq:f} is differentiable. Since $\blam$ is the regularization coefficient, we could expect $\blam$ is away from $0$ in practice.

\section{Experiments}
%In this section, we conduct experiments to compare LDMMA with existing algorithms for hyperparameter optimization on synthetic data and real datasets, respectively. We consider hyperparameter optimization for elastic net, sparse group lasso, and support vector machines \cite{kunapuli2008classification,feng2018gradient,gao2022value}. These three models only use a combination of regularization functions $\|\cdot\|_1,\|\cdot\|_2$ and $\frac12\|\cdot\|_2^2$ that are included by our previous analysis. The elastic net \cite{zou2003regression} is a linear combination of the lasso and ridge penalties and the sparse group lasso \cite{simon2013sparse} combines the group lasso and lasso penalties, which are designed to encourage sparsity and grouping of predictors \cite{feng2018gradient}. The support vector machine is a classical machine learning model that assigns labels to objects \cite{noble2006support} and its related BLP has been intensively studied \cite{kunapuli2008classification,couellan2015bi,jiang2020hyper}. To compare the performance of each method, we calculate validation and test error with LL minimizers from solving subproblems in each experiment.
In this section, we conduct experiments to compare LDMMA with existing algorithms for hyperparameter optimization on synthetic data and real datasets, respectively.
We briefly introduce our competitors in experiments:

%\textbf{Grid Search:} We perform a $10\times10$ uniformly-spaced grid search.

%\textbf{Random Search:} We uniformly sample $100$ times for each direction of hyperparameters.

%\textbf{Implicit Differentiation}: We implement the IGJO algorithm in \cite{feng2018gradient}.

%\textbf{TPE}:  We use the Tree-structured Parzen Estimator approach in \cite{bergstra2013making} which is known as a Bayesian optimization method.

%\textbf{VF-iDCA}: We implement the VF-iDCA algorithm in \cite{gao2022value}, which considers the LL value function and applies DC program to approximately solve the BLP.
\begin{itemize}
    \item \textbf{Grid Search}: We perform a $10\times10$ uniformly-spaced grid search.
    \item \textbf{Random Search}: We uniformly sample $100$ times for each direction of hyperparameters.
    \item \textbf{Implicit Differentiation}: We implement the IGJO algorithm in \cite{feng2018gradient}.
    \item \textbf{TPE}:  We use the Tree-structured Parzen Estimator approach in \cite{bergstra2013making} which is known as a Bayesian optimization method.
    \item \textbf{VF-iDCA}: We implement the VF-iDCA algorithm in \cite{gao2022value}, which considers the LL value function and applies DC program to approximately solve the BLP.
\end{itemize}
% The elastic net \citet{zou2003regression} is a linear combination of the lasso and ridge penalties and the sparse group lasso \citet{simon2013sparse} combines the group lasso and lasso penalties. We generate corresponding data as described in Appendix \ref{detail-data}.

We consider hyperparameter optimization for elastic net, sparse group lasso, and support vector machines \citep{kunapuli2008classification,feng2018gradient,gao2022value}. These three models only use a combination of regularization functions $\|\cdot\|_1,\|\cdot\|_2$ and $\frac12\|\cdot\|_2^2$ that are included by our previous analysis. The elastic net \citep{zou2003regression} is a linear combination of the lasso and ridge penalties and the sparse group lasso \citep{simon2013sparse} combines the group lasso and lasso penalties, which are designed to encourage sparsity and grouping of predictors \citep{feng2018gradient}. The support vector machine is a classical machine learning model that assigns labels to objects \citep{noble2006support} and its related BLP has been intensively studied \citep{kunapuli2008classification,couellan2015bi,jiang2020hyper}. To compare the performance of each method, we calculate validation and test error with obtained LL minimizers from solving subproblems in each experiment. Our competitors are implemented using code from \url{https://github.com/SUSTech-Optimization?tab=repositories}. We use the off-the-shelf solver MOSEK\footnote{\url{https://docs.mosek.com/9.3/toolbox/index.html}} to solve the subproblem \eqref{eq:subeps} at each iteration of LDMMA.
The formulations of the three models and their associated subproblems can be found in \cref{sec:a}.

%We compared our proposed LDMMA algorithm with several existing hyperparameter optimization algorithms with open-source code \url{https://github.com/SUSTech-Optimization?tab=repositories}. For zero-order methods, we considered grid search and random search. Grid search performes a $10\times10$ uniformly-spaced brute force search and random search samples $100$ times for each direction of hyperparameters. We also conduct the Tree-structured Parzen Estimator(TPE) approach \cite{bergstra2013making} known as a Bayesian optimization method. For gradient-based algorithms for BLPs, we implement IGJO \cite{feng2018gradient} and VF-iDCA \cite{gao2022value} as competitors. We use the off-the-shelf solver MOSEK \footnote{\url{https://docs.mosek.com/9.3/toolbox/index.html}} to solve the subproblem \eqref{eq:subeps} at each iteration of LDMMA.
%The formulations of the three models and their associated subproblems can be found in \cref{sec:a}.The synthetic data consists of observation matrix samples from specific distribution and response vectors with rational noise. Detailed descriptions of the synthetic data generation settings and parameter settings of each method are in \cref{detail-data}. All experiments are conducted in the experimental environment of macOS Monterey 12.5 and 8GB of memory.
\begin{table*}
%\tiny
%\vspace{-0.1in}
\caption{Elastic net problems on synthetic data, where $|I_{tr}|$, $|I_{val}|$, $|I_{te}|$ and $p$ represent the number of training observations, validation observations, predictors and features, respectively.}
\centering
 \setlength\tabcolsep{2pt}
 \resizebox{\linewidth}{!}{
\scalebox{1}{
\begin{tabular}{clccc|cccc}
% \begin{tiny}
\toprule
% \hline
\textbf{Settings} & \textbf{Methods} & \textbf{Time(s)} & \textbf{Val. Err.} & \textbf{Test Err.}&\textbf{Settings}  & \textbf{Time(s)} & \textbf{Val. Err.} & \textbf{Test Err.}\\
\midrule
 \multirow{6}{*}{\makecell[l]{$|I_{tr}|=100$\\ $|I_{val}|=20$\\ $|I_{te}|=250$\\ $p=250$}} & Grid & $6.14\pm2.34$ & $6.31\pm0.84$ & $6.55\pm0.91$ &\multirow{6}{*}{\makecell[l]{$|I_{tr}|=100$\\ $|I_{val}|=100$\\ $|I_{te}|=250$\\ $p=450$}}& $7.49\pm0.26$ & $7.40\pm1.29$ & $6.05\pm1.06$\\
 & Random & $9.54\pm0.28$ & $5.98\pm2.24$ & $6.56\pm0.89$& &$17.12\pm0.40$ & $7.40\pm1.31$ & $7.10\pm1.08$ \\
 & TPE & $10.10\pm0.44$ & $6.05\pm1.35$ & $6.53\pm0.90$&& $17.86\pm0.92$ & $7.38\pm1.30$ & $7.06\pm1.06$\\
 & IGJO & $3.92\pm2.42$ & $4.46\pm1.75$ & $6.76\pm0.97$&& $4.02\pm2.99$ & $5.63\pm1.24$ & $5.36\pm1.07$\\
 & VF-iDCA & $0.84\pm0.25$ & $2.14\pm0.76$ & $4.03\pm0.65$&&$2.57\pm0.96$ & $3.64\pm0.53$ & $4.73\pm0.69$\\
 & LDMMA & $0.64\pm0.20$ & $2.06\pm0.42$ & $3.91\pm0.63$ \vspace{2pt}&&$1.85\pm0.21$ & $3.15\pm0.32$ & $4.25\pm0.48$\\
 \hline \\ [-1.8ex]
 \multirow{6}{*}{\makecell[l]{$|I_{tr}|=100$\\ $|I_{val}|=100$\\ $|I_{te}|=250$\\ $p=250$}} & Grid & $9.71\pm0.21$ & $6.82\pm1.14$ & $6.55\pm0.91$ & \multirow{6}{*}{\makecell[l]{$|I_{tr}|=100$\\ $|I_{val}|=100$\\ $|I_{te}|=100$\\ $p=2500$}} & $13.17\pm3.43$ & $7.81\pm1.53$ & $8.82\pm0.92$ \\
 & Random & $9.54\pm0.28$ & $6.31\pm0.84$ & $6.68\pm1.13$ && $15.29\pm2.60$ & $6.44\pm1.53$ & $8.67\pm0.94$ \\
 & TPE & $10.10\pm0.44$ & $6.30\pm0.85$ & $6.54\pm1.15$ && $22.42\pm1.30$ & $7.71\pm1.32$ & $8.43\pm0.80$\\
 & IGJO & $3.92\pm2.42$ & $4.36\pm0.96$ & $5.54\pm0.82$ && $31.30\pm6.41$ & $7.78\pm1.12$ & $8.61\pm0.82$\\
 & VF-iDCA & $1.90\pm0.56$ & $3.04\pm1.51$ & $4.52\pm0.62$ && $23.57\pm4.06$ & $1.83\pm0.71$ & $5.13\pm1.02$\\
 & LDMMA & $1.12\pm0.15$ & $2.63\pm0.41$ & $4.05\pm0.97$ && $9.30\pm2.61$ & $2.25\pm1.09$ & $4.19\pm0.76$
 \vspace{2pt}\\
\bottomrule
% \end{tiny}
\end{tabular}}}\label{table-ela}
\end{table*}

\begin{table*}
%\tiny
\caption{Sparse group lasso problems on synthetic data, where $p$ and $M$ represent the number of covariates and covariate groups, respectively, and $n$ represent the data scale described above.}
\label{table-sgl}
\centering
\setlength\tabcolsep{2pt}
 \resizebox{\linewidth}{!}{
\scalebox{1}{
\begin{tabular}{clccc|cccc}
\toprule
\textbf{Settings} & \textbf{Methods} & \textbf{Time(s)} & \textbf{Val. Err.} & \textbf{Test Err.} & \textbf{Settings} & \textbf{Time(s)} & \textbf{Val. Err.} & \textbf{Test Err.}\\
\midrule
 \multirow{5}{*}{\makecell[l]{$n=300$\\ $p=600$\\ $M=30$}} & Grid & $35.68\pm1.85$ & $43.80\pm7.31$ & $45.43\pm7.87$ & \multirow{5}{*}{\makecell[l]{$n=450$\\ $p=900$\\ $M=60$}} & $45.72\pm4.88$ & $39.58\pm5.31$ & $46.66\pm5.33$\\
 & Random & $26.32\pm1.51$ & $36.94\pm7.01$ & $43.54\pm8.87$ && $58.58\pm1.24$ & $43.91\pm4.90$ & $41.08\pm9.05$\\
 & IGJO & $49.00\pm4.11$ & $38.90\pm6.21$ & $41.94\pm6.73$&& $64.90\pm10.63$ & $29.90\pm7.15$ & $48.82\pm6.74$\\
 & VF-iDCA & $8.69\pm1.25$ & $0.04\pm0.01$ & $37.31\pm4.01$ && $25.41\pm1.56$ & $20.19\pm6.04$ & $36.36\pm5.45$\\
 & LDMMA & $6.85\pm0.74$ & $22.94\pm2.56$ & $21.25\pm4.63$ && $20.15\pm2.61$ & $21.04\pm2.99$ & $28.83\pm6.76$
 \vspace{2pt}\\
 \hline \\ [-1.8ex]
 \multirow{5}{*}{\makecell[l]{$n=300$\\ $p=900$\\ $M=60$}} & Grid & $40.84\pm1.04$ & $42.45\pm7.67$ & $44.56\pm7.33$ & \multirow{5}{*}{\makecell[l]{$n=600$\\ $p=1200$\\ $M=150$}} & $74.22\pm8.89$ & $50.52\pm4.14$ & $59.90\pm9.01$\\
 & Random & $66.58\pm1.01$ & $39.27\pm7.32$ & $43.00\pm8.83$ & &$72.15\pm4.49$ & $53.21\pm7.64$ & $57.84\pm14.52$\\
 & IGJO & $60.67\pm5.77$ & $28.32\pm4.93$ & $43.43\pm7.44$ && $80.52\pm5.66$ & $41.70\pm5.37$ & $56.01\pm12.74$\\
 & VF-iDCA & $31.75\pm5.62$ & $17.85\pm3.27$ & $32.65\pm4.83$ && $33.57\pm7.48$ & $25.64\pm6.35$ & $29.55\pm3.88$\\
 & LDMMA & $24.78\pm0.92$ & $24.54\pm3.77$ & $24.91\pm3.58$ && $27.34\pm3.73$ & $20.94\pm3.52$ & $23.74\pm2.01$
 \vspace{2pt}\\
 % \hline \\ [-1.8ex]
 % \multirow{5}{*}{\makecell[l]{$n=450$\\ $p=900$\\ $M=60$}} & Grid & $45.72\pm4.88$ & $39.58\pm5.31$ & $46.66\pm5.33$\\
 % & Random & $58.58\pm1.24$ & $43.91\pm4.90$ & $41.08\pm9.05$\\
 % & IGJO & $64.90\pm10.63$ & $29.90\pm7.15$ & $48.82\pm6.74$\\
 % & VF-iDCA & $25.41\pm1.56$ & $20.19\pm6.04$ & $36.36\pm5.45$\\
 % & LDMMA & $20.15\pm2.61$ & $21.04\pm2.99$ & $28.83\pm6.76$
 % \vspace{2pt}\\
 % \hline \\ [-1.8ex]
 % \multirow{5}{*}{\makecell[l]{$n=600$\\ $p=1200$\\ $M=150$}} & Grid & $74.22\pm8.89$ & $50.52\pm4.14$ & $59.90\pm9.01$\\
 % & Random & $72.15\pm4.49$ & $53.21\pm7.64$ & $57.84\pm14.52$\\
 % & IGJO & $80.52\pm5.66$ & $41.70\pm5.37$ & $56.01\pm12.74$\\
 % & VF-iDCA & $33.57\pm7.48$ & $25.64\pm6.35$ & $29.55\pm3.88$\\
 % & LDMMA & $27.34\pm3.73$ & $20.94\pm3.52$ & $23.74\pm2.01$\\
\bottomrule
\end{tabular}}}
\end{table*}

\begin{table*}
\tiny
\caption{Support Vector Machine problems with 3-fold and 6-fold cross-validation on three datasets, where the number of features $p$ and samples $|\Omega|,|\Omega_{test}|$ are displayed together with dataset names. Results on other datasets are presented in Appendix \ref{detail-data}.}
\label{table-SVM-3fold-6fold}
\centering
\small
% \footnotesize
\setlength\tabcolsep{4pt}
% \scalebox{1}{
\resizebox{\linewidth}{!}{
\begin{tabular}{ll|ccc|ccc}
\toprule
% & \multicolumn{3}{c}{\textbf{6-fold}}
 \multirow{2}{*}{\textbf{Dataset}} & \multirow{2}{*}{\textbf{Methods}} & \multicolumn{3}{c}{\textbf{3-fold}} & \multicolumn{3}{c}{\textbf{6-fold}} \vspace{2pt}\\
 \cline{3-5}\cline{6-8} \multicolumn{1}{c}{}& \multicolumn{1}{c}{} & \textbf{Times(s)} & \textbf{Val. Err.} & \textbf{Test Err.} & \textbf{Times(s)} & \textbf{Val. Err.} & \textbf{Test Err.}\\
 \midrule
 % \multirow{5}{*}{\makecell[l]{liver-disorders-scale\\ $p=5$ \\ $|\Omega|=72$ \\ $|\Omega_{test}|=73$}} & Grid & $0.74\pm0.01$ & $0.65\pm0.08$ & $0.32\pm0.07$ & $1.16\pm0.02$ & $0.61\pm0.08$ & $0.32\pm0.06$\\
 % & Random & $0.75\pm0.02$ & $0.63\pm0.07$ & $0.32\pm0.05$ & $1.16\pm0.04$ & $0.59\pm0.06$ & $0.32\pm0.05$ \\
 % & TPE & $0.68\pm0.55$ & $0.65\pm0.08$ & $0.32\pm0.07$ & $2.26\pm1.67$ & $0.62\pm0.06$ & $0.32\pm0.06$ \\
 % & VF-iDCA & $0.13\pm0.03$ & $0.52\pm0.07$ & $0.27\pm0.04$ & $0.27\pm0.03$ & $0.40\pm0.05$ & $0.30\pm0.04$ \\
 % & LDMMA & $0.08\pm0.01$ & $0.46\pm0.08$ & $0.23\pm0.10$ & $0.15\pm0.04$ & $0.19\pm0.08$ & $0.24\pm0.08$
 % \vspace{2pt}\\
 % \hline \\ [-1.8ex]
 \multirow{5}{*}{\makecell[l]{diabetes-scale \\ $p=8$ \\ $|\Omega|=384$ \\ $|\Omega_{test}|=384$}} & Grid & $3.17\pm0.08$ & $0.55\pm0.03$ & $0.19\pm0.03$ & $6.22\pm0.21$ & $0.54\pm0.03$ & $0.33\pm0.04$\\
 & Random & $3.47\pm0.14$ & $0.56\pm0.03$ & $0.32\pm0.05$ & $7.18\pm0.30$ & $0.55\pm0.04$ & $0.30\pm0.05$ \\
 & TPE & $10.21\pm6.68$ & $0.55\pm0.04$ & $0.29\pm0.06$ & $76.67\pm36.39$ & $0.54\pm0.03$ & $0.34\pm0.06$ \\
 & VF-iDCA & $0.28\pm0.04$ & $0.48\pm0.03$ & $0.23\pm0.01$ & $0.65\pm0.03$ & $0.43\pm0.03$ & $0.23\pm0.02$ \\
 & LDMMA & $0.22\pm0.03$ & $0.49\pm0.02$ & $0.19\pm0.01$ & $0.55\pm0.10$ & $0.39\pm0.05$ & $0.20\pm0.02$
 \vspace{2pt}\\
 \hline \\ [-1.8ex]
 \multirow{5}{*}{\makecell[l]{breast-cancer-scale \\ $p=14$ \\ $|\Omega|=336$ \\ $|\Omega_{test}|=347$}} & Grid & $3.32\pm0.09$ & $0.08\pm0.01$ & $0.16\pm0.08$ & $6.32\pm0.11$ & $0.08\pm0.01$ & $0.15\pm0.12$\\
 & Random & $3.69\pm0.07$ & $0.09\pm0.01$ & $0.08\pm0.08$ & $7.20\pm0.12$ & $0.09\pm0.02$ & $0.10\pm0.11$ \\
 & TPE & $17.88\pm10.05$ & $0.09\pm0.01$ & $0.10\pm0.11$ & $34.66\pm20.57$ & $0.09\pm0.01$ & $0.18\pm0.13$ \\
 & VF-iDCA & $0.24\pm0.04$ & $0.09\pm0.01$ & $0.04\pm0.01$ & $0.57\pm0.12$ & $0.08\pm0.01$ & $0.03\pm0.01$ \\
 & LDMMA & $0.12\pm0.01$ & $0.08\pm0.01$ & $0.03\pm0.01$ & $0.42\pm0.17$ & $0.08\pm0.01$ & $0.02\pm0.01$
 \vspace{2pt}\\
 % \hline \\ [-1.8ex]
 % \multirow{5}{*}{\makecell[l]{ sonar \\ $p=60$ \\ $|\Omega|=102$ \\ $|\Omega_{test}|=106$}} & Grid & $10.08\pm0.33$ & $0.59\pm0.10$ & $0.41\pm0.14$ & $20.88\pm0.61$ & $0.63\pm0.06$ & $0.49\pm0.12$\\
 % & Random & $10.30\pm0.18$ & $0.55\pm0.07$ & $0.31\pm0.08$ & $20.56\pm0.31$ & $0.58\pm0.03$ & $0.41\pm0.10$ \\
 % & TPE & $42.80\pm13.95$ & $0.64\pm0.13$ & $0.45\pm0.11$ & $189.82\pm19.80$ & $0.70\pm0.06$ & $0.53\pm0.07$\\
 % & VF-iDCA & $1.32\pm0.23$ & $0.03\pm0.02$ & $0.25\pm0.04$ & $3.03\pm0.09$ & $0.00\pm0.00$ & $0.24\pm0.04$\\
 % & LDMMA & $0.82\pm0.15$ & $0.17\pm0.02$ & $0.25\pm0.04$ & $2.38\pm0.19$ & $0.00\pm0.00$ & $0.22\pm0.02$
 % \vspace{2pt}\\
 % \hline \\ [-1.8ex]
 % \multirow{5}{*}{\makecell[l]{a1a \\ $p=123$ \\ $|\Omega|=801$ \\ $|\Omega_{test}|=804$}} & Grid & $17.07\pm0.36$ & $0.41\pm0.02$ & $0.24\pm0.02$ & $36.77\pm0.99$ & $0.39\pm0.02$ & $0.24\pm0.01$ \\
 % & Random & $17.81\pm0.30$ & $0.41\pm0.02$ & $0.21\pm0.03$ & $39.03\pm0.65$ & $0.39\pm0.02$ & $0.21\pm0.02$\\
 % & TPE & $187.91\pm39.92$ & $0.42\pm0.02$ & $0.23\pm0.02$ & $447.17\pm85.49$ & $0.40\pm0.02$ & $0.24\pm0.01$\\
 % & VF-iDCA & $2.40\pm0.13$ & $0.27\pm0.02$ & $0.17\pm0.01$ & $11.01\pm1.26$ & $0.19\pm0.02$ & $0.18\pm0.01$\\
 % & LDMMA & $1.24\pm0.12$ & $0.20\pm0.02$ & $0.15\pm0.08$ & $8.04\pm0.71$ & $0.15\pm0.05$ & $0.17\pm0.01$
 % \vspace{2pt}\\
 \hline \\ [-1.8ex]
 \multirow{5}{*}{\makecell[l]{w1a \\ $p=300$ \\ $|\Omega|=1236$ \\ $|\Omega_{test}|=1241$}} & Grid & $20.08\pm0.33$ & $0.59\pm0.10$ & $0.41\pm0.14$ & $104.47\pm2.99$ & $0.06\pm0.01$ & $0.03\pm0.00$\\
 & Random & $20.30\pm0.18$ & $0.55\pm0.07$ & $0.31\pm0.08$ & $147.88\pm8.64$ & $0.05\pm0.00$ & $0.02\pm0.00$\\
 & TPE & $85.80\pm13.95$ & $0.64\pm0.13$ & $0.45\pm0.11$ & $682.35\pm17.52$ & $0.06\pm0.01$ & $0.03\pm0.00$\\
 & VF-iDCA & $4.32\pm0.23$ & $0.03\pm0.02$ & $0.03\pm0.00$ & $25.37\pm3.10$ & $0.01\pm0.00$ & $0.03\pm0.00$\\
 & LDMMA & $2.19\pm0.24$ & $0.01\pm0.00$ & $0.01\pm0.00$ & $15.25\pm2.90$ & $0.01\pm0.00$ & $0.02\pm0.00$\\
\bottomrule
\end{tabular}
}
\end{table*}
\begin{figure*}[htbp]
    \centering
      \includegraphics[width=0.32\linewidth]{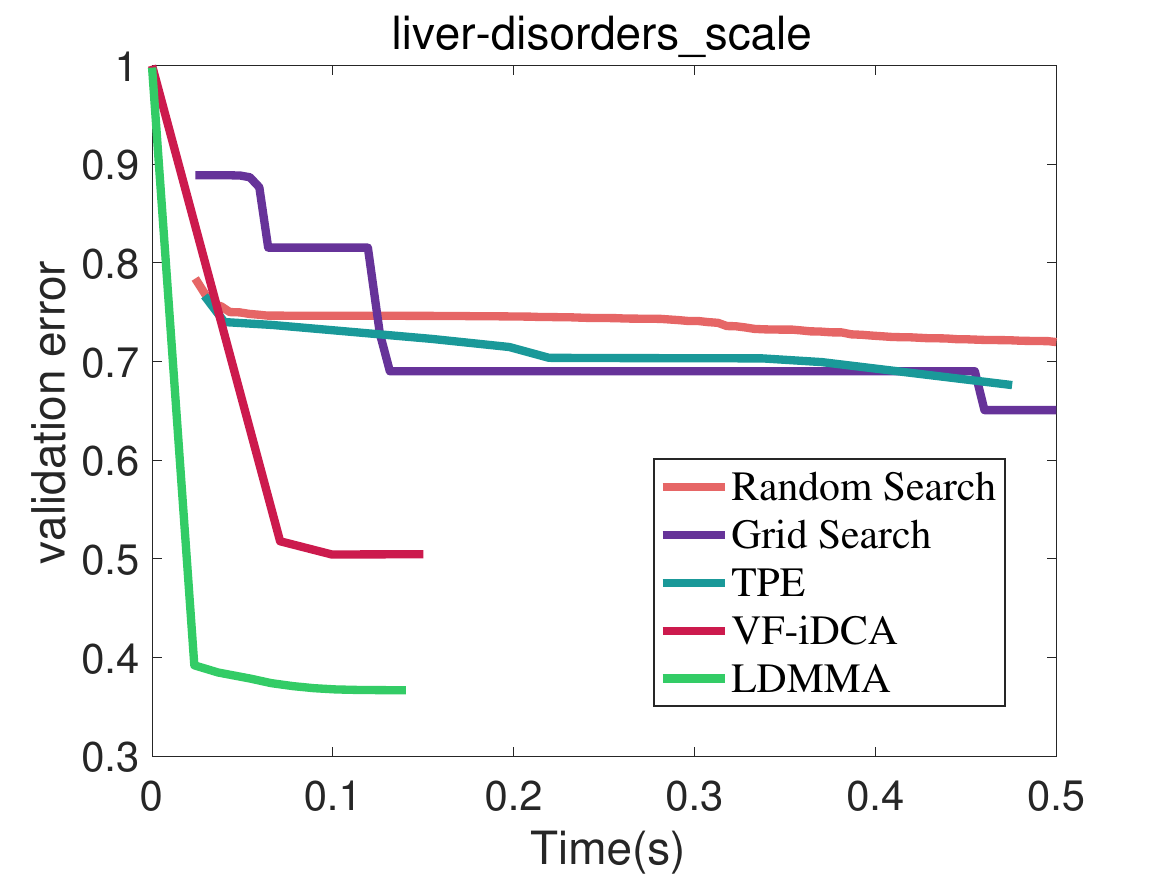}
 		\label{SVM1_val}
 		\includegraphics[width=0.32\linewidth]{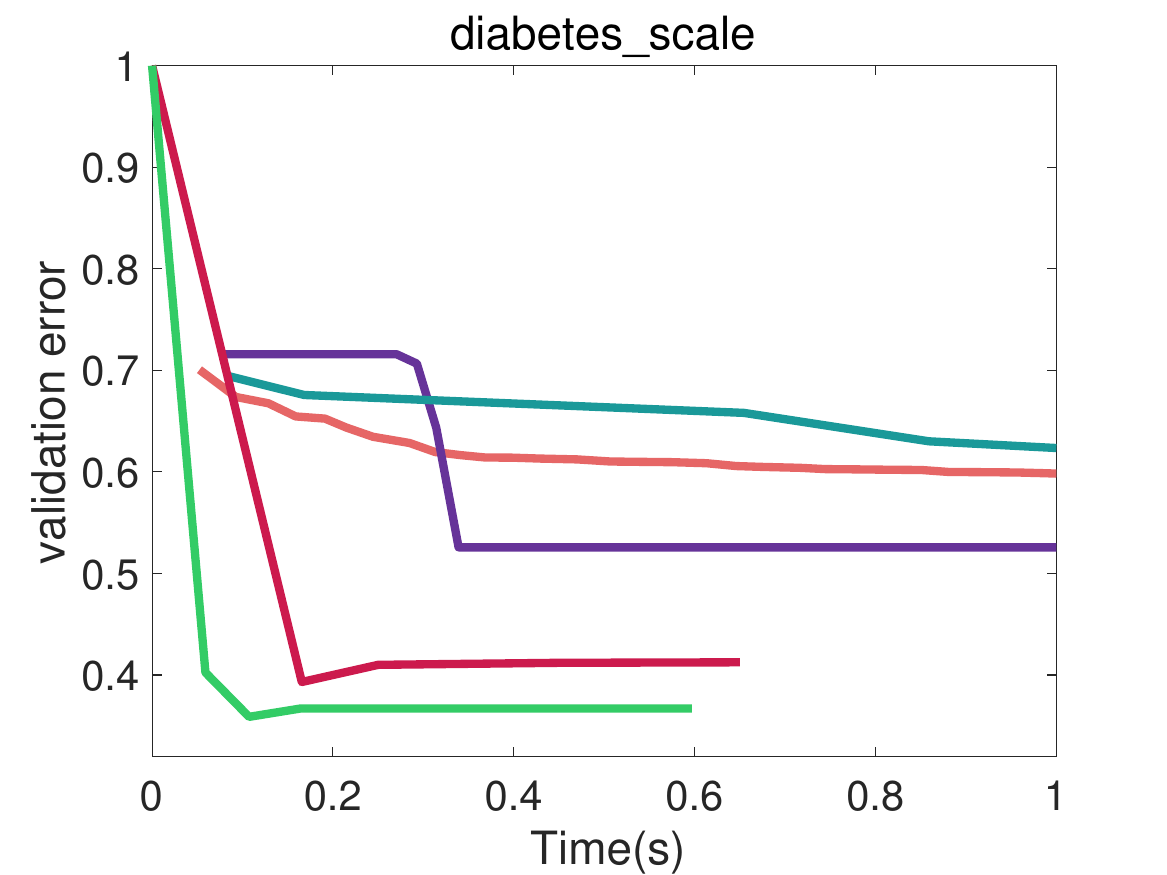}
 		% \label{SVM2_val}
            \includegraphics[width=0.32\linewidth]{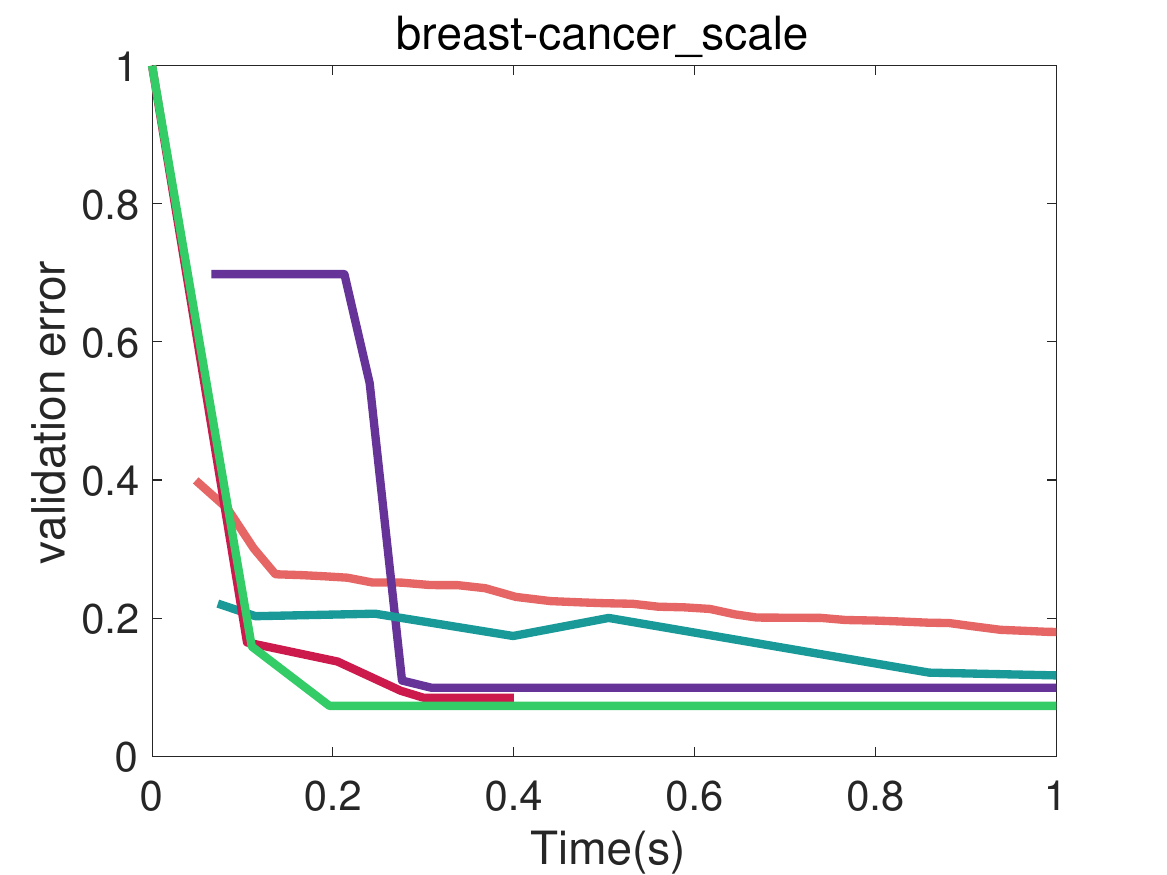}
 		% \label{SVM3_val}
 		\includegraphics[width=0.32\linewidth]{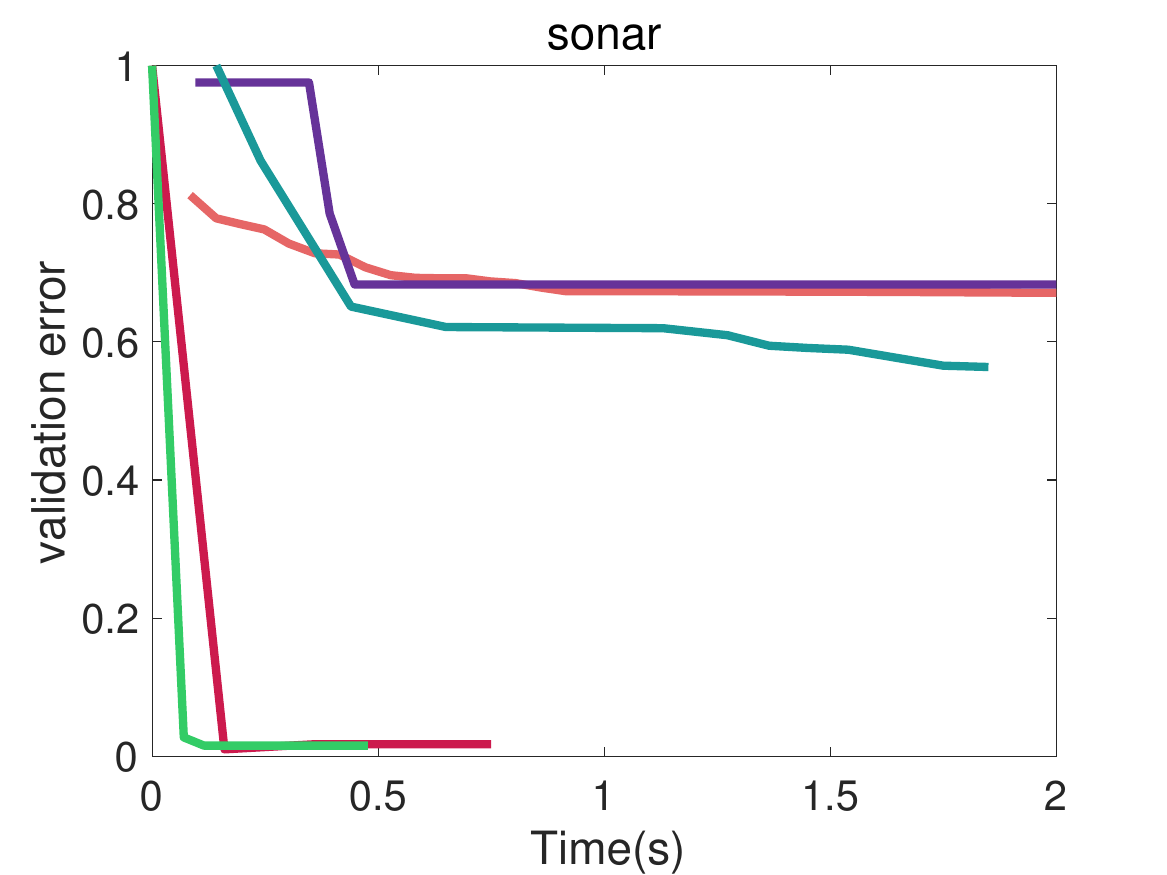}
 		\label{SVM4_val}
            \includegraphics[width=0.32\linewidth]{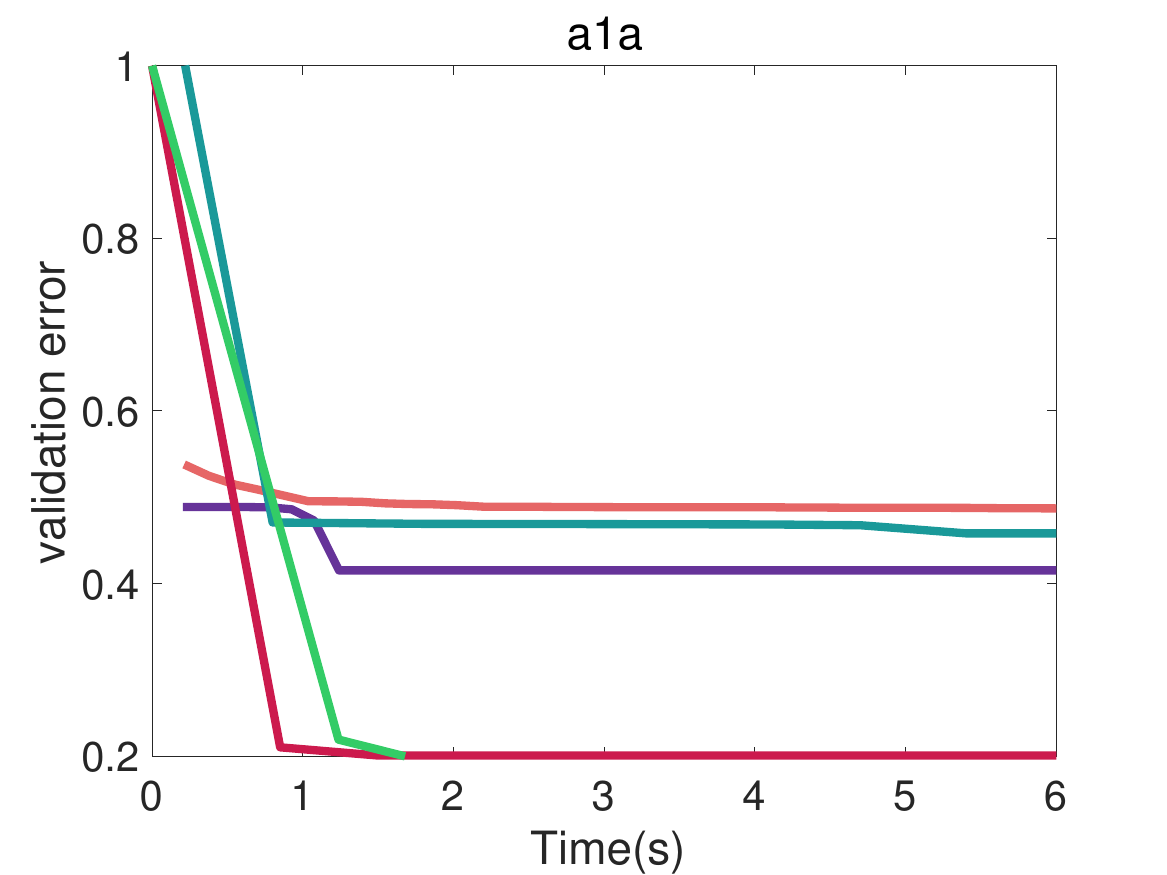}
 		\label{SVM5_val}
 		\centering
 		\includegraphics[width=0.32\linewidth]{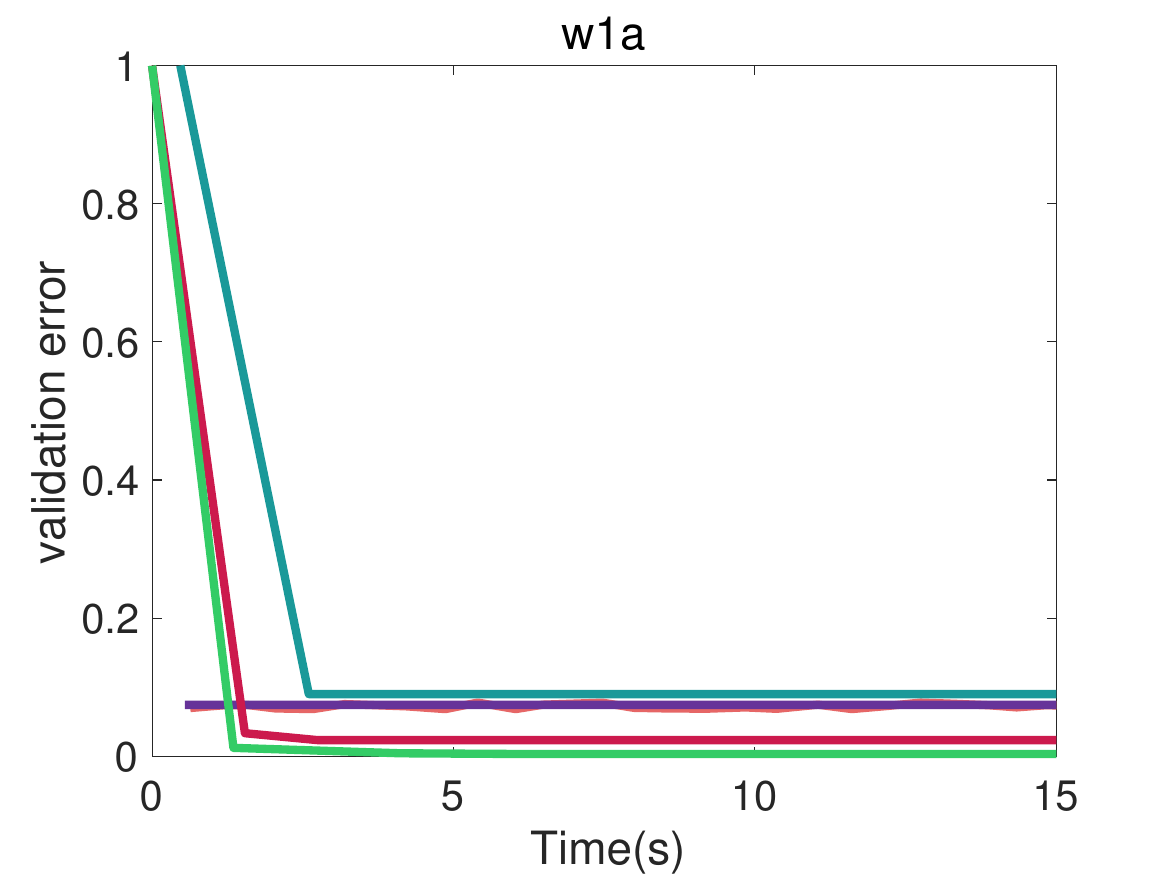}
 		% \label{SVM6_val}
        \includegraphics[width=0.32\linewidth]{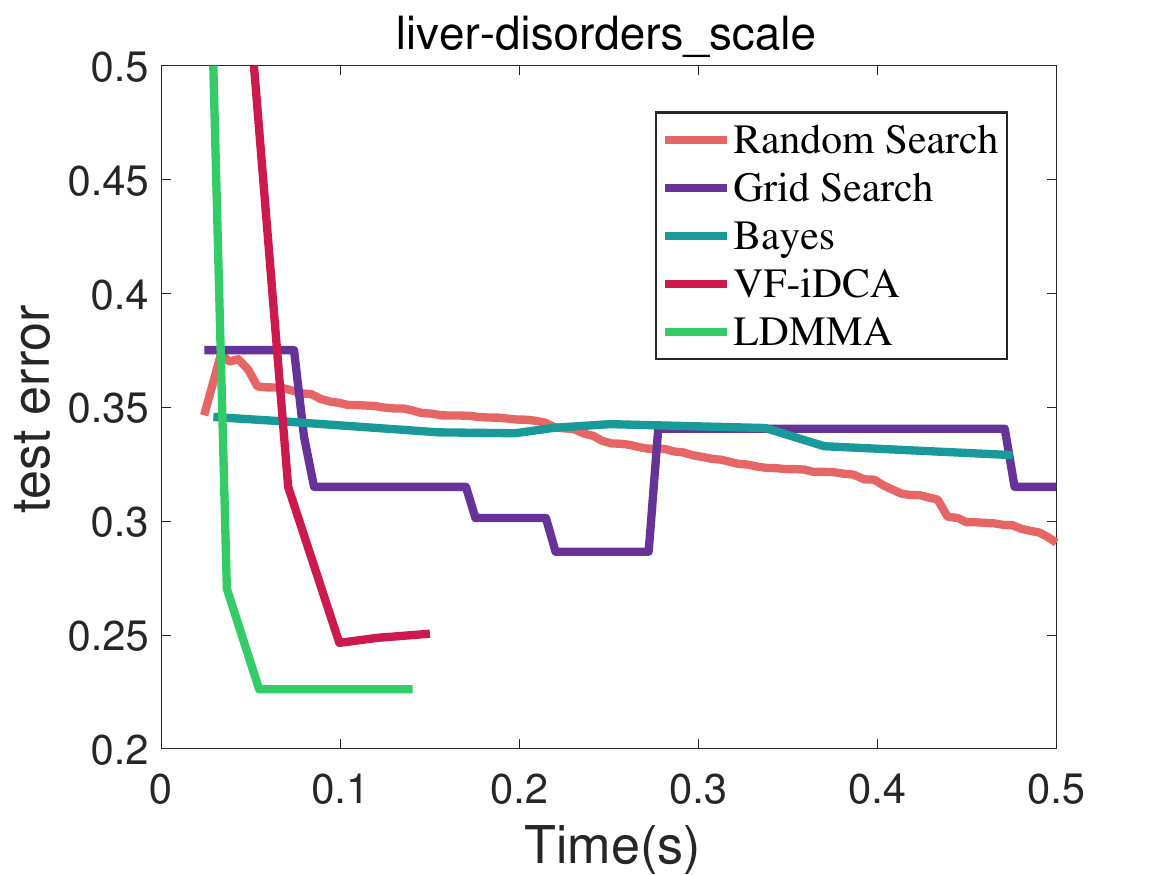}
        \label{SVM1_test}
 		\includegraphics[width=0.32\linewidth]{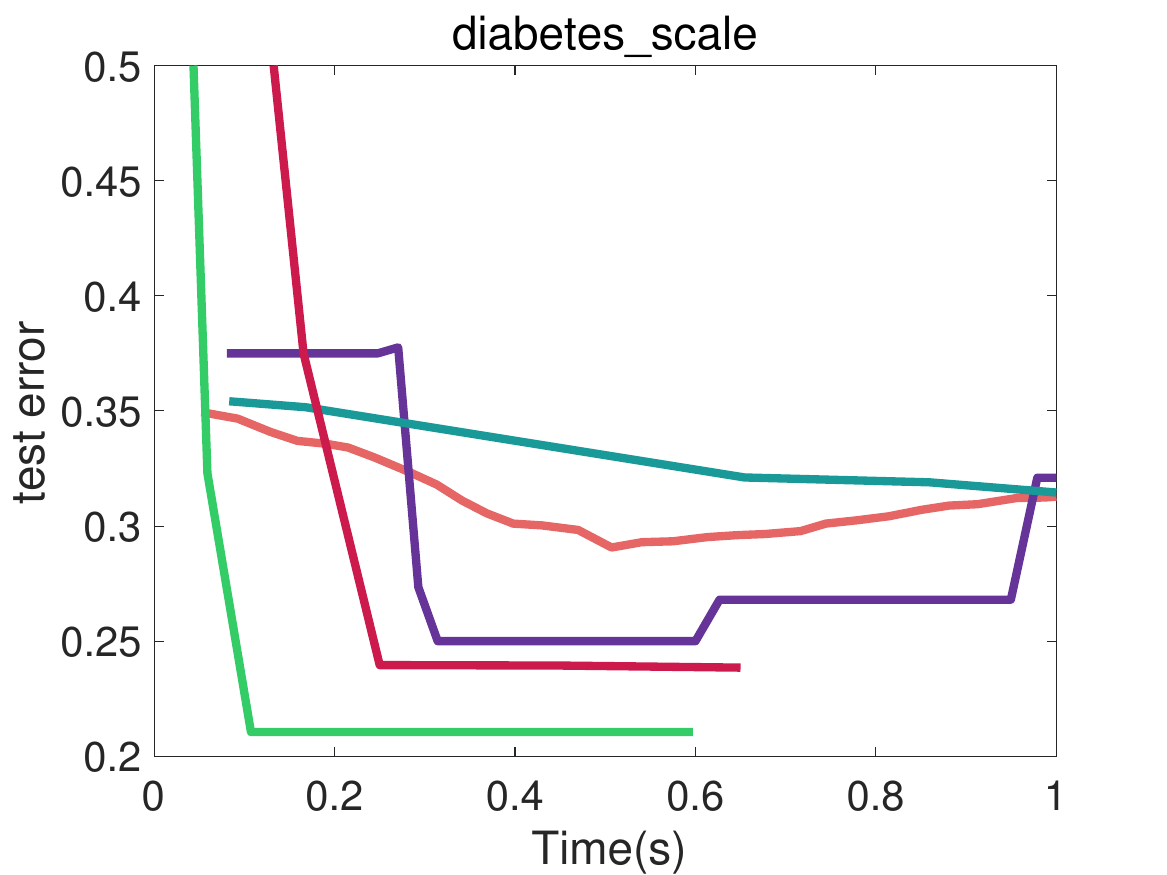}
 		% \label{SVM2_test}
   %          % \caption{diabetes_scale}
        \includegraphics[width=0.32\linewidth]{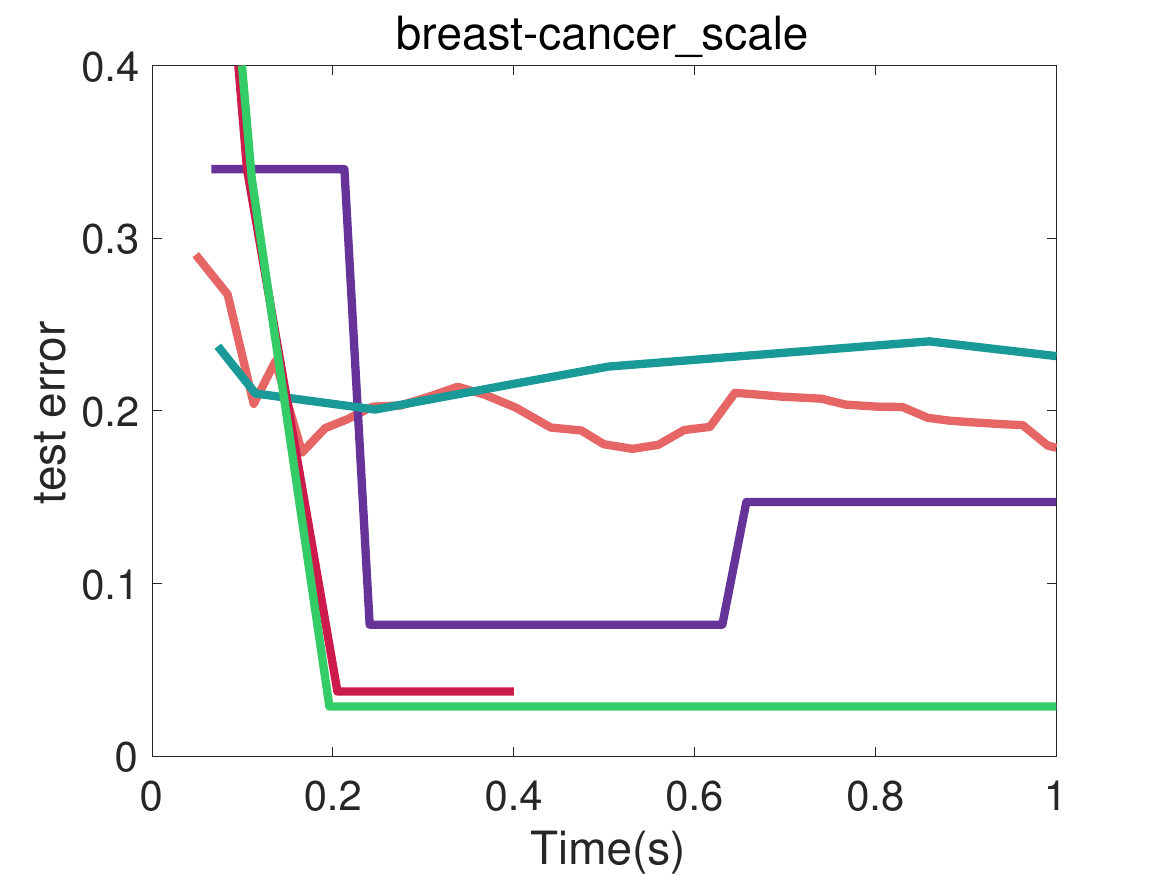}
   %      \label{SVM3_test}
 		\includegraphics[width=0.32\linewidth]{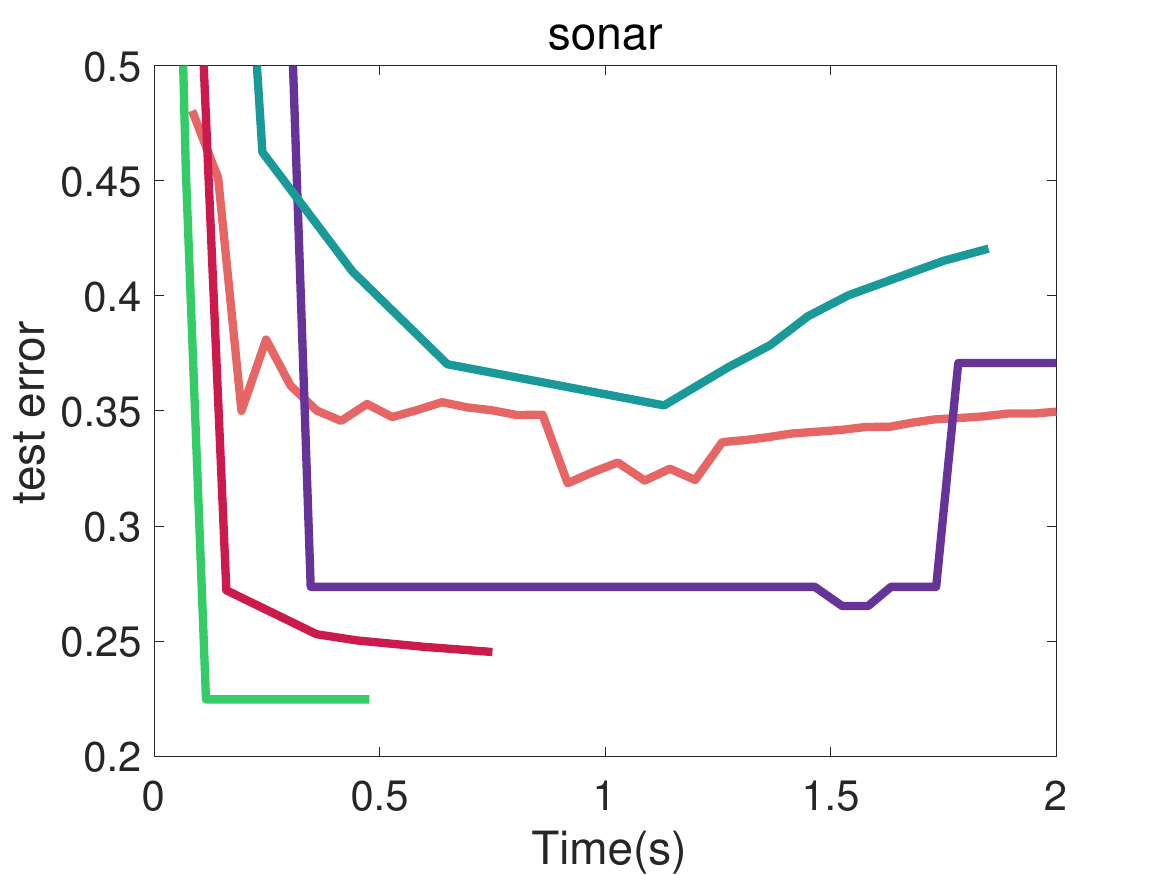}
 		\label{SVM4_test}
        \includegraphics[width=0.32\linewidth]{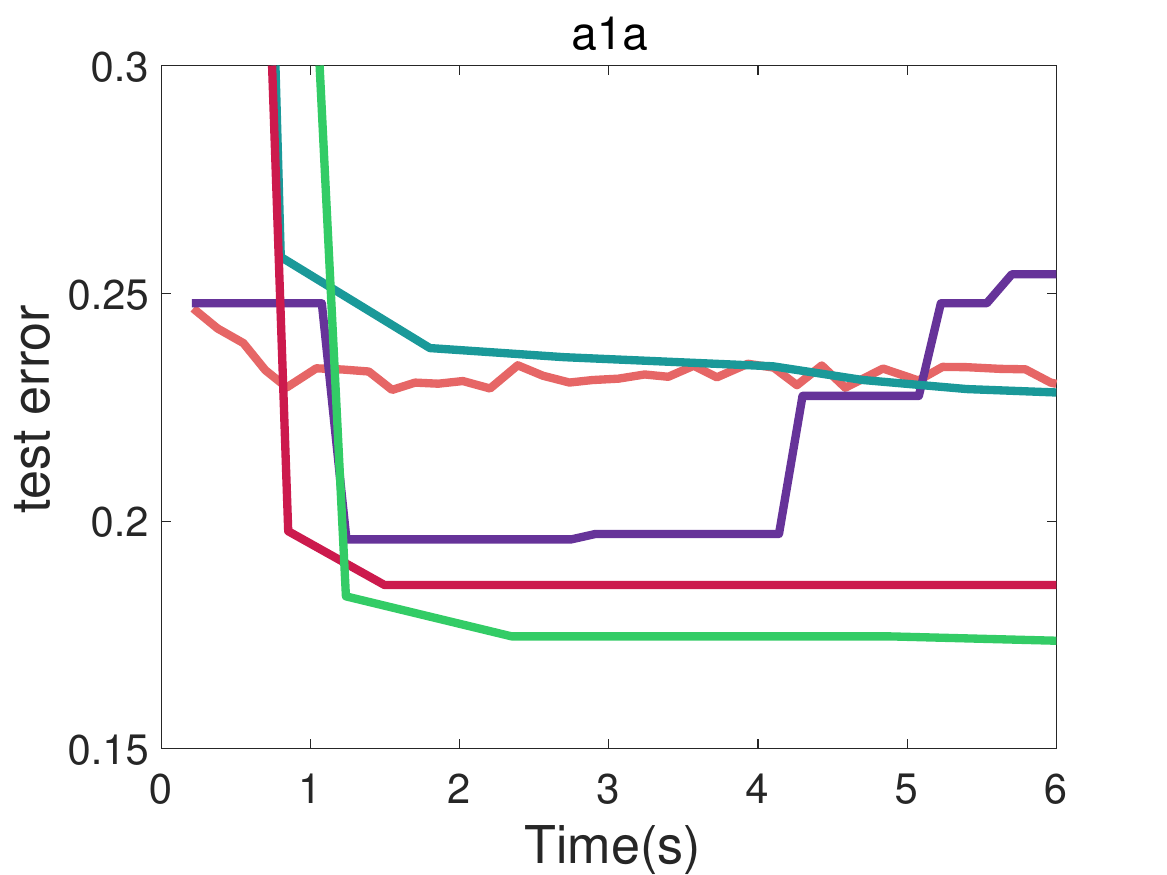}
        \label{SVM5_test}
 		\includegraphics[width=0.32\linewidth]{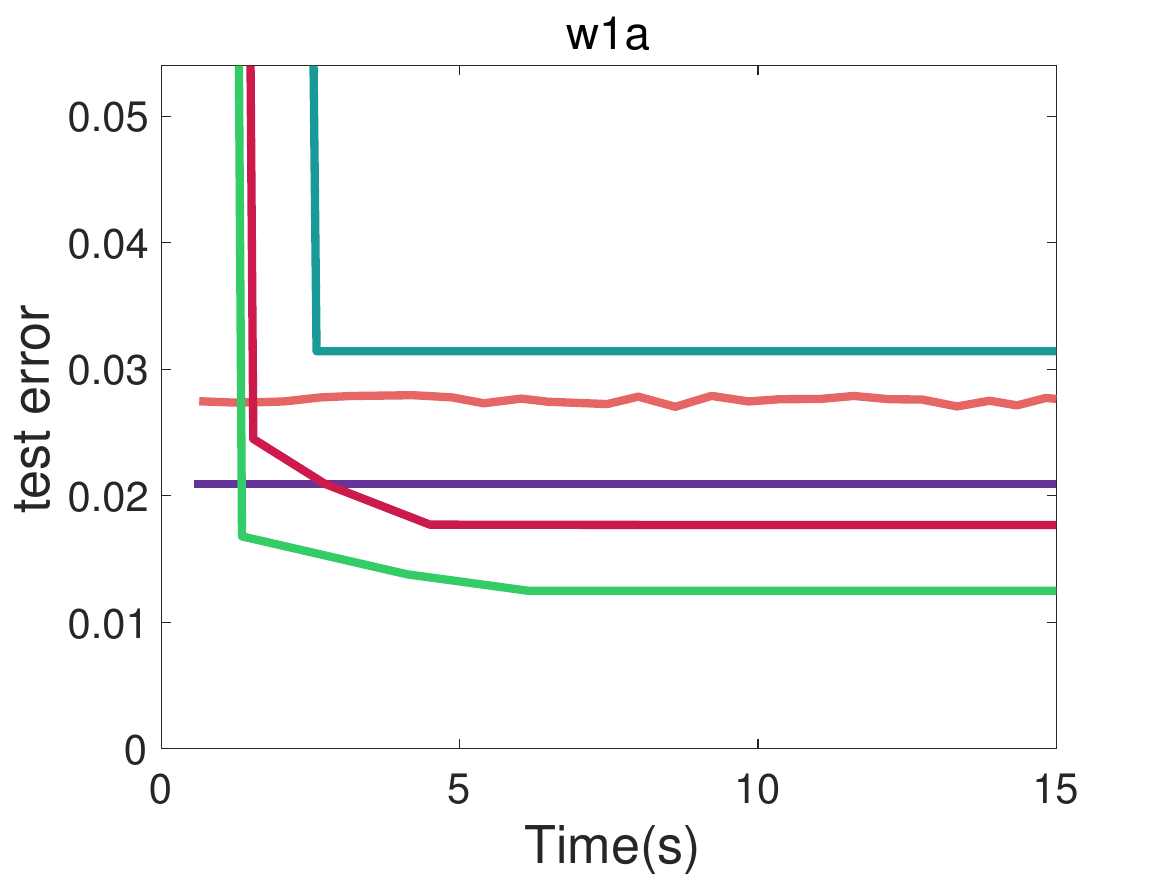}
 		% \label{SVM6_test}
 	\caption{Comparison for variation trend of validation error and test error with time in SVM experiments.}
 \label{fig:SVM}
 \vspace{-0.1in}
\end{figure*}
\begin{table*}
%\tiny
%\vspace{-0.1in}
\caption{Elastic net problem on datasets gisette and sensit, where $|I_{tr}|$, $|I_{val}|$, $|I_{te}|$ and $p$ represent the number of training samples, validation samples, test samples and features, respectively.}
%\label{table-ela}
\centering
 \setlength\tabcolsep{2pt}
 \resizebox{\linewidth}{!}{
\scalebox{1}{
\begin{tabular}{clccc|cccc}
% \begin{tiny}
\toprule
% \hline
\textbf{Settings} & \textbf{Methods} & \textbf{Time(s)} & \textbf{Val. Err.} & \textbf{Test Err.}&\textbf{Settings}  & \textbf{Time(s)} & \textbf{Val. Err.} & \textbf{Test Err.}\\
\midrule
 \multirow{6}{*}{\makecell[l]{gisette \\ $p=5000$ \\$|I_{tr}|=50$\\ $|I_{val}|=50$\\ $|I_{te}|=5900$}} & Grid & $36.14\pm4.52$ & $0.22\pm0.04$ & $0.23\pm0.01$ &\multirow{6}{*}{\makecell[l]{sensit \\ $p=78823$ \\ $|I_{tr}|=25$\\ $|I_{val}|=25$\\ $|I_{te}|=50$}}& $1.39\pm0.15$ & $1.40\pm0.79$ & $1.31\pm0.45$\\
 & Random & $55.44\pm9.21$ & $0.22\pm0.05$ & $0.23\pm0.03$ & &$1.28\pm0.10$ & $1.52\pm0.55$ & $1.45\pm0.40$ \\
 & TPE & $40.01\pm7.04$ & $0.22\pm0.05$ & $0.24\pm0.02$ && $1.78\pm0.09$ & $1.38\pm0.96$ & $1.39\pm0.55$\\
 & IGJO & $6.15\pm1.54$ & $0.24\pm0.05$ & $0.24\pm0.03$ && $0.49\pm0.74$ & $0.52\pm0.20$ & $0.61\pm0.11$\\
 & VF-iDCA & $5.38\pm1.65$ & $0.00\pm0.00$ & $0.19\pm0.01$ && $0.16\pm0.08$ & $0.23\pm0.11$ & $0.51\pm0.06$\\
 & LDMMA & $5.64\pm0.94$ & $0.00\pm0.00$ & $0.17\pm0.01$ \vspace{2pt}&&$0.16\pm0.07$ & $0.25\pm0.12$ & $0.40\pm0.09$\\
\bottomrule
% \end{tiny}
\end{tabular}}}\label{table:high}
\end{table*}
\subsection{Experiments on Synthetic Data}
% As for the initial points and parameters $\epsilon$ in LDMMA, we adopted $\epsilon=10^{-4}$ and $r_0=0.01$ as the initial point for introduced constraints for all experiments. When $\epsilon$ is too small, the algorithm may converge slower since the algorithm takes more steps to reach the stop criterion.
% a linear combination of the lasso and ridge penalties that encourages both sparsity and grouping of predictors.
The synthetic data consists of observation matrix samples from specific distribution and response vectors with rational noise. Detailed descriptions of the synthetic data generation settings and parameter settings of each method are in \cref{detail-data}.
\subsubsection{Elastic Net}
The numerical results on elastic net are reported in Table \ref{table-ela}. We conduct 30 repeated experiments in each data size and take the average. Overall, LDMMA achieves the highest solution quality in the shortest running time on this problem model. Traditional gradient-free methods (grid search, random search, and TPE) still suffer from limitations in testing error and expensive time costs. Gradient-based methods IGJO perform slightly better on accuracy and efficiency, and VF-iDCA is the best among existing methods in the literature. Furthermore, LDMMA achieves more exquisite validation and test error beyond the reach of other methods along with greatly reduced time cost.
\subsubsection{Sparse Group Lasso}
We conduct experiments with different data scales and report numerical results averaged over 30 repetitions in Table \ref{table-sgl}. For each experiment, the generated datasets consist of $n$ training, $n/3$ validation, and 100 test samples. LDMMA still performs the best in the sense that it achieves the minimum time cost and test error, meanwhile a similar validation error with VF-iDCA. As the dimension of data increases, our methods appear to offer all-round fitness for the problem, which indicates the superiority of LDMMA in large-scale hyperparameter optimization. It is worth noting that our algorithm can obtain the optimal solution for both hyperparameters and upper-level variables by solving problem \eqref{eq:subeps}. This is a significant advantage of our algorithm.
\subsection{Experiments on Real Data}
\subsubsection{Support Vector Machine with Cross-validation}
We conduct experiments for support vector machine (SVM) model on real-world datasets. Real-world datasets tend to be lager in size than synthetic datasets and exhibit more complex and irregular sample distributions. Consequently, hyperparameter selection will be heavily influenced by the partition of the training, validation, and test sets. Different partition can lead to substantial variations in the predictive performance of the models. Therefore, we perform 3-fold and 6-fold cross-validation using six moderately sized real datasets: liver-disorders, diabetes, breast-cancer, sonar, a1a \citep{asuncion2007uci}, and w1a \citep{catanzaro2008fast}. These datasets are derived from medical statistics and offer rich features and samples for analysis.

The details of corresponding subproblem for cross-validation with dataset partition and experimental settings are presented in Appendix \ref{detail-data}. We report numerical results on three datasets in Table \ref{table-SVM-3fold-6fold} and \cref{fig:SVM}. As shown in Table \ref{table-SVM-3fold-6fold}, comparison results demonstrate that LDMMA consistently outperforms other optimization algorithms in terms of both the validation error and test error (except the case of diabetes-scal with 3-fold). Moreover, LDMMA achieves faster convergence than other methods.
\cref{fig:SVM} reports the variation trend of validation error and test error versus time from our experiments with 6-fold cross-validation.
We emphasize that LDMMA remarkably reduces the validation and test errors at a faster speed than other algorithms. These results verify the superiority and applicability of our algorithm for SVM on real-world datasets.

\subsubsection{Elastic Net with High Dimensional datasets}
Furthermore, to certify the robustness of our algorithm, it is necessary to conduct experiments with larger scale which may capture more practical settings. We consider elastic net problem on high dimendional datasets gisette\citep{guyon2004result} and sensit \citep{duarte2004vehicle}. Experimental results are reported in Table \ref{table:high}, demonstrating that even in relatively high dimensional problems, LDMMA still achieves competitive performance at a fast speed.

\section{Conclusion}
In this paper, we propose a novel single-level reformulation for a group of hyperparameter optimization problems, where the main steps are leveraging the structure of the lower-level problem and applying Fenchel's duality. Our reformulation does not involve complex implicit functions but conjugates of some atom functions. Based on the new reformulation, we then propose the LDMMA, which applies the majorization-minimization method to obtain a convex subproblem. One superiority of our method is that for many practical problems, our subproblem are conic programs so that the subproblem can be efficiently solved by the off-the-shelf solvers.  Theoretically,  we prove the sequence convergence of the LDMMA. Numerical experiments on both synthetic and real-world data demonstrate the outperformance of LDMMA over existing methods.

We remark that the methods for solving subproblem \eqref{eq:subeps} are not limited to the off-the-shelf solvers.  In future work, we will explore first-order methods that are suitable for high-dimension settings for solving subproblem \eqref{eq:subeps}; see, e.g., \cite{lan2011primal} and \cite{necoara2019complexity}.

\subsection*{Acknowledgements}
\label{Acknowledgements}
Rujun Jiang is partly supported by the National Key RD Program of China under grant 2023YFA1009300, National Natural Science Foundation of China under grants 12171100 and 72394364, and Natural Science Foundation of Shanghai 22ZR1405100. Anthony Man-Cho So is partly supported by the Hong Kong Research Grants Council (RGC) General Research Fund (GRF) project CUHK 14204823.

\bibliography{ref}

\begin{thebibliography}{}

\bibitem[Andreani et~al., 2016]{andreani2016cone}
Andreani, R., Martinez, J.~M., Ramos, A., and Silva, P.~J. (2016).
\newblock A cone-continuity constraint qualification and algorithmic
  consequences.
\newblock {\em SIAM Journal on Optimization}, 26(1):96--110.

\bibitem[Asuncion and Newman, 2007]{asuncion2007uci}
Asuncion, A. and Newman, D. (2007).
\newblock Uci machine learning repository.

\bibitem[Attouch et~al., 2010]{attouch2010}
Attouch, H., Bolte, J., Redont, P., and Soubeyran, A. (2010).
\newblock Proximal alternating minimization and projection methods for
  nonconvex problems: An approach based on the {K}urdyka–{{\L}}ojasiewicz
  inequality.
\newblock {\em Mathematics of Operations Research}, 35:438--457.

\bibitem[Attouch et~al., 2013]{attouch2013convergence}
Attouch, H., Bolte, J., and Svaiter, B.~F. (2013).
\newblock Convergence of descent methods for semi-algebraic and tame problems:
  Proximal algorithms, forward--backward splitting, and regularized
  {G}auss--{S}eidel methods.
\newblock {\em Mathematical Programming}, 137(1):91--129.

\bibitem[Bergstra et~al., 2013]{bergstra2013making}
Bergstra, J., Yamins, D., and Cox, D. (2013).
\newblock Making a science of model search: Hyperparameter optimization in
  hundreds of dimensions for vision architectures.
\newblock In {\em International conference on machine learning}, pages
  115--123. PMLR.

\bibitem[Bertrand et~al., 2020]{bertrand2020implicit}
Bertrand, Q., Klopfenstein, Q., Blondel, M., Vaiter, S., Gramfort, A., and
  Salmon, J. (2020).
\newblock Implicit differentiation of lasso-type models for hyperparameter
  optimization.
\newblock In {\em International Conference on Machine Learning}, pages
  810--821. PMLR.

\bibitem[Bertrand et~al., 2022]{bertrand2022implicit}
Bertrand, Q., Klopfenstein, Q., Massias, M., Blondel, M., Vaiter, S., Gramfort,
  A., and Salmon, J. (2022).
\newblock Implicit differentiation for fast hyperparameter selection in
  non-smooth convex learning.
\newblock {\em The Journal of Machine Learning Research}, 23(1):6680--6722.

\bibitem[Bishop et~al., 1995]{bishop1995neural}
Bishop, C.~M. et~al. (1995).
\newblock {\em Neural networks for pattern recognition}.
\newblock Oxford university press.

\bibitem[Bolte et~al., 2007]{bolte2007lojasiewicz}
Bolte, J., Daniilidis, A., and Lewis, A. (2007).
\newblock The {{\L}}ojasiewicz inequality for nonsmooth subanalytic functions
  with applications to subgradient dynamical systems.
\newblock {\em SIAM Journal on Optimization}, 17(4):1205--1223.

\bibitem[Bolte et~al., 2014]{bolte2014proximal}
Bolte, J., Sabach, S., and Teboulle, M. (2014).
\newblock Proximal alternating linearized minimization for nonconvex and
  nonsmooth problems.
\newblock {\em Mathematical Programming}, 146(1):459--494.

\bibitem[Boyd et~al., 2004]{boyd2004convex}
Boyd, S., Boyd, S.~P., and Vandenberghe, L. (2004).
\newblock {\em Convex optimization}.
\newblock Cambridge university press.

\bibitem[Br{\"u}ckner and Scheffer, 2011]{bruckner2011stackelberg}
Br{\"u}ckner, M. and Scheffer, T. (2011).
\newblock Stackelberg games for adversarial prediction problems.
\newblock In {\em Proceedings of the 17th ACM SIGKDD international conference
  on Knowledge discovery and data mining}, pages 547--555.

\bibitem[Catanzaro et~al., 2008]{catanzaro2008fast}
Catanzaro, B., Sundaram, N., and Keutzer, K. (2008).
\newblock Fast support vector machine training and classification on graphics
  processors.
\newblock In {\em Proceedings of the 25th international conference on Machine
  learning}, pages 104--111.

\bibitem[Chen et~al., 2023]{chen2023bilevel}
Chen, L., Xu, J., and Zhang, J. (2023).
\newblock On bilevel optimization without lower-level strong convexity.
\newblock {\em arXiv preprint arXiv:2301.00712}.

\bibitem[Claesen et~al., 2014]{claesen2014ensembleSVM}
Claesen, M., De~Smet, F., Suykens, J., and De~Moor, B. (2014).
\newblock Ensemblesvm: A library for ensemble learning using support vector
  machines.
\newblock {\em arXiv preprint arXiv:1403.0745}.

\bibitem[Couellan and Wang, 2015]{couellan2015bi}
Couellan, N. and Wang, W. (2015).
\newblock Bi-level stochastic gradient for large scale support vector machine.
\newblock {\em Neurocomputing}, 153:300--308.

\bibitem[Duarte and Hu, 2004]{duarte2004vehicle}
Duarte, M.~F. and Hu, Y.~H. (2004).
\newblock Vehicle classification in distributed sensor networks.
\newblock {\em Journal of Parallel and Distributed Computing}, 64(7):826--838.

\bibitem[Facchinei and Pang, 2003]{facchinei2003finite}
Facchinei, F. and Pang, J.-S. (2003).
\newblock {\em Finite-dimensional variational inequalities and complementarity
  problems}.
\newblock Springer.

\bibitem[Feng and Simon, 2018]{feng2018gradient}
Feng, J. and Simon, N. (2018).
\newblock Gradient-based regularization parameter selection for problems with
  nonsmooth penalty functions.
\newblock {\em Journal of Computational and Graphical Statistics},
  27(2):426--435.

\bibitem[Franceschi et~al., 2017]{franceschi2017forward}
Franceschi, L., Donini, M., Frasconi, P., and Pontil, M. (2017).
\newblock Forward and reverse gradient-based hyperparameter optimization.
\newblock In {\em International Conference on Machine Learning}, pages
  1165--1173. PMLR.

\bibitem[Franceschi et~al., 2018]{franceschi2018bilevel}
Franceschi, L., Frasconi, P., Salzo, S., Grazzi, R., and Pontil, M. (2018).
\newblock Bilevel programming for hyperparameter optimization and
  meta-learning.
\newblock In {\em International Conference on Machine Learning}, pages
  1568--1577. PMLR.

\bibitem[Gao et~al., 2022]{gao2022value}
Gao, L.~L., Ye, J., Yin, H., Zeng, S., and Zhang, J. (2022).
\newblock Value function based difference-of-convex algorithm for bilevel
  hyperparameter selection problems.
\newblock In {\em International Conference on Machine Learning}, pages
  7164--7182. PMLR.

\bibitem[Guyon et~al., 2004]{guyon2004result}
Guyon, I., Gunn, S., Ben-Hur, A., and Dror, G. (2004).
\newblock Result analysis of the nips 2003 feature selection challenge.
\newblock {\em Advances in neural information processing systems}, 17.

\bibitem[Jiang and Siddiqui, 2020]{jiang2020hyper}
Jiang, W. and Siddiqui, S. (2020).
\newblock Hyper-parameter optimization for support vector machines using
  stochastic gradient descent and dual coordinate descent.
\newblock {\em EURO Journal on Computational Optimization}, 8(1):85--101.

\bibitem[Jourani, 1994]{jourani1994constraint}
Jourani, A. (1994).
\newblock Constraint qualifications and lagrange multipliers in
  nondifferentiable programming problems.
\newblock {\em Journal of Optimization Theory and Applications},
  81(3):533--548.

\bibitem[Kunapuli et~al., 2008]{kunapuli2008classification}
Kunapuli, G., Bennett, K.~P., Hu, J., and Pang, J.-S. (2008).
\newblock Classification model selection via bilevel programming.
\newblock {\em Optimization Methods \& Software}, 23(4):475--489.

\bibitem[Lan et~al., 2011]{lan2011primal}
Lan, G., Lu, Z., and Monteiro, R.~D. (2011).
\newblock Primal-dual first-order methods with iteration-complexity for cone
  programming.
\newblock {\em Mathematical Programming}, 126(1):1--29.

\bibitem[Lange, 2016]{lange2016mm}
Lange, K. (2016).
\newblock {\em MM optimization algorithms}.
\newblock SIAM.

\bibitem[Liu et~al., 2022]{liubome}
Liu, B., Ye, M., Wright, S., Stone, P., et~al. (2022).
\newblock Bome! bilevel optimization made easy: A simple first-order approach.
\newblock In {\em Advances in Neural Information Processing Systems}.

\bibitem[Liu et~al., 2021]{liu21o}
Liu, R., Liu, X., Yuan, X., Zeng, S., and Zhang, J. (2021).
\newblock A value-function-based interior-point method for non-convex bi-level
  optimization.
\newblock In Meila, M. and Zhang, T., editors, {\em Proceedings of the 38th
  International Conference on Machine Learning}, volume 139 of {\em Proceedings
  of Machine Learning Research}, pages 6882--6892. PMLR.

\bibitem[Lorraine et~al., 2020]{lorraine2020optimizing}
Lorraine, J., Vicol, P., and Duvenaud, D. (2020).
\newblock Optimizing millions of hyperparameters by implicit differentiation.
\newblock In {\em International Conference on Artificial Intelligence and
  Statistics}, pages 1540--1552. PMLR.

\bibitem[Necoara et~al., 2019]{necoara2019complexity}
Necoara, I., Patrascu, A., and Glineur, F. (2019).
\newblock Complexity of first-order inexact lagrangian and penalty methods for
  conic convex programming.
\newblock {\em Optimization Methods and Software}, 34(2):305--335.

\bibitem[Noble, 2006]{noble2006support}
Noble, W.~S. (2006).
\newblock What is a support vector machine?
\newblock {\em Nature biotechnology}, 24(12):1565--1567.

\bibitem[Ouattara and Aswani, 2016]{Ouattara2016DualityAT}
Ouattara, A. and Aswani, A. (2016).
\newblock Duality approach to bilevel programs with a convex lower level.
\newblock {\em 2018 Annual American Control Conference (ACC)}, pages
  1388--1395.

\bibitem[Pedregosa, 2016]{pedregosa2016hyperparameter}
Pedregosa, F. (2016).
\newblock Hyperparameter optimization with approximate gradient.
\newblock In {\em International conference on machine learning}, pages
  737--746. PMLR.

\bibitem[Rajeswaran et~al., 2019]{rajeswaran2019meta}
Rajeswaran, A., Finn, C., Kakade, S.~M., and Levine, S. (2019).
\newblock Meta-learning with implicit gradients.
\newblock {\em Advances in Neural Information Processing Systems}, 32.

\bibitem[Rockafellar and Wets, 2009]{rockafellar2009variational}
Rockafellar, R.~T. and Wets, R. J.-B. (2009).
\newblock {\em Variational analysis}, volume 317.
\newblock Springer Science \& Business Media.

\bibitem[Shen and Chen, 2023]{shen2023penalty}
Shen, H. and Chen, T. (2023).
\newblock On penalty-based bilevel gradient descent method.
\newblock {\em arXiv preprint arXiv:2302.05185}.

\bibitem[Simon et~al., 2013]{simon2013sparse}
Simon, N., Friedman, J., Hastie, T., and Tibshirani, R. (2013).
\newblock A sparse-group lasso.
\newblock {\em Journal of computational and graphical statistics},
  22(2):231--245.

\bibitem[Vial, 1983]{vial1983strong}
Vial, J.-P. (1983).
\newblock Strong and weak convexity of sets and functions.
\newblock {\em Mathematics of Operations Research}, 8(2):231--259.

\bibitem[Wang et~al., 2021]{wang21d}
Wang, J., Chen, H., Jiang, R., Li, X., and Li, Z. (2021).
\newblock Fast algorithms for stackelberg prediction game with least squares
  loss.
\newblock In {\em Proceedings of the 38th International Conference on Machine
  Learning}, pages 10708--10716. PMLR.

\bibitem[Wang et~al., 2022]{Wang2022SolvingSP}
Wang, J., Huang, W., Jiang, R., Li, X., and Wang, A.~L. (2022).
\newblock Solving stackelberg prediction game with least squares loss via
  spherically constrained least squares reformulation.
\newblock In {\em ICML}. PMLR.

\bibitem[Wright et~al., 1999]{wright1999numerical}
Wright, S., Nocedal, J., et~al. (1999).
\newblock Numerical optimization.
\newblock {\em Springer Science}, 35(67-68):7.

\bibitem[Wu et~al., 2020]{wu2020finite}
Wu, Y.~F., Zhang, W., Xu, P., and Gu, Q. (2020).
\newblock A finite-time analysis of two time-scale actor-critic methods.
\newblock {\em Advances in Neural Information Processing Systems},
  33:17617--17628.

\bibitem[Xu and Ye, 2014]{Xu2014ASA}
Xu, M. and Ye, J.~J. (2014).
\newblock A smoothing augmented lagrangian method for solving simple bilevel
  programs.
\newblock {\em Computational Optimization and Applications}, 59:353--377.

\bibitem[Yang et~al., 2019]{yang2019provably}
Yang, Z., Chen, Y., Hong, M., and Wang, Z. (2019).
\newblock Provably global convergence of actor-critic: A case for linear
  quadratic regulator with ergodic cost.
\newblock {\em Advances in Neural Information Processing Systems}, 32.

\bibitem[Ye et~al., 2021]{ye2021difference}
Ye, J.~J., Yuan, X., Zeng, S., and Zhang, J. (2021).
\newblock Difference of convex algorithms for bilevel programs with
  applications in hyperparameter selection.
\newblock {\em arXiv preprint arXiv:2102.09006}.

\bibitem[Ye et~al., 2022]{ye2022difference}
Ye, J.~J., Yuan, X., Zeng, S., and Zhang, J. (2022).
\newblock Difference of convex algorithms for bilevel programs with
  applications in hyperparameter selection.
\newblock {\em Mathematical Programming}, pages 1--34.

\bibitem[Zou and Hastie, 2003]{zou2003regression}
Zou, H. and Hastie, T. (2003).
\newblock Regression shrinkage and selection via the elastic net, with
  applications to microarrays.
\newblock {\em JR Stat Soc Ser B}, 67:301--20.

\end{thebibliography}

\section*{Checklist}
 \begin{enumerate}

 \item For all models and algorithms presented, check if you include:
 \begin{enumerate}
   \item A clear description of the mathematical setting, assumptions, algorithm, and/or model. [Yes]
   \item An analysis of the properties and complexity (time, space, sample size) of any algorithm. [Yes]
   \item (Optional) Anonymized source code, with specification of all dependencies, including external libraries. [Yes]
 \end{enumerate}

 \item For any theoretical claim, check if you include:
 \begin{enumerate}
   \item Statements of the full set of assumptions of all theoretical results. [Yes]
   \item Complete proofs of all theoretical results. [Yes]
   \item Clear explanations of any assumptions. [Yes]     
 \end{enumerate}

 \item For all figures and tables that present empirical results, check if you include:
 \begin{enumerate}
   \item The code, data, and instructions needed to reproduce the main experimental results (either in the supplemental material or as a URL). [Yes]
   \item All the training details (e.g., data splits, hyperparameters, how they were chosen). [Yes]
         \item A clear definition of the specific measure or statistics and error bars (e.g., with respect to the random seed after running experiments multiple times). [Yes]
         \item A description of the computing infrastructure used. (e.g., type of GPUs, internal cluster, or cloud provider). [Yes]
 \end{enumerate}

 \item If you are using existing assets (e.g., code, data, models) or curating/releasing new assets, check if you include:
 \begin{enumerate}
   \item Citations of the creator If your work uses existing assets. [Yes]
   \item The license information of the assets, if applicable. [Yes]
   \item New assets either in the supplemental material or as a URL, if applicable. [Yes]
   \item Information about consent from data providers/curators. [Yes]
   \item Discussion of sensible content if applicable, e.g., personally identifiable information or offensive content. [Not Applicable]
 \end{enumerate}

 \item If you used crowdsourcing or conducted research with human subjects, check if you include:
 \begin{enumerate}
   \item The full text of instructions given to participants and screenshots. [Not Applicable]
   \item Descriptions of potential participant risks, with links to Institutional Review Board (IRB) approvals if applicable. [Not Applicable]
   \item The estimated hourly wage paid to participants and the total amount spent on participant compensation. [Not Applicable]
 \end{enumerate}

 \end{enumerate}
%newpage
\appendix
%\section{Applications}
\onecolumn
The appendix is organized as follows.  In  Appendix \ref{appen:th},  we provide missing proofs of Sec. \ref{sec:th}. In Appendix. \ref{sec:a}, we use several widely used machine learning models to illustrate that the constraints in \eqref{eq:refsub} can often be represented by conic inequalities. In Appendix \ref{detail-data}, we provide details of our numerical experiments and give some additional experiments results. 

%\aistatstitle{Lower-level Duality Based Reformulation and Majorization Minimization Algorithm for Hyperparameter Optimization}

\section{Proofs in Section \ref{sec:th}}\label{appen:th}
\subsection{Proof for Lemma \ref{le:cq}}
\begin{proof}
   (i) The first item follows from \cite{ye2022difference}.

   (ii) The second item can be proved by contradiction. If NNAMCQ fails at some feasible point $\mathbf{z}:=(\mathbf{x},\boldsymbol{\lambda},\boldsymbol{r},\mathbf{\rho})$, i.e., there exists vector $(\mu_0,\mu_1,\ldots,\mu_{\tau})$ such that $0\in \mu_0\partial g(\mathbf{z})+\sum_{i=1}^{\tau}\mu_i\partial h_j(\mathbf{z})$, one can obtain a contradiction whether the constraint $g(\mathbf{z})=F(\mathbf{x},\boldsymbol{\lambda},\mathbf{\rho})+\sum_i{\lambda_ir_i}-\epsilon\leq0$ is active or not.
\begin{enumerate}
\item When $F(\mathbf{x},\boldsymbol{\lambda},\mathbf{\rho})+\sum_i{\lambda_ir_i}-\epsilon<0$, $\mu_0=0$ and we observe that $\partial h_i(\mathbf{z})=\partial\left(P_i(\mathbf{x})-r_i\right)$, $i\in[\tau]$ are linearly independent due to term $r_i$. Then $0\in \mu_0\partial g(\mathbf{z})+\sum_{i=1}^{\tau}\mu_i\partial h_j(\mathbf{z})$ implies $\mu_i=0$ for $i\in[\tau]$, which contradicts the definition of NNAMCQ that requires $\boldsymbol{\mu}\neq0$.
\item When $F(\mathbf{x},\boldsymbol{\lambda},\mathbf{\rho})+\sum_i{\lambda_ir_i}-\epsilon=0$ is active, the condition $0\in \mu_0\partial g(\mathbf{z})+\sum_{i=1}^{\tau}\mu_i\partial h_j(\mathbf{z})$ implies $\mathbf z={\rm argmin}_z F(\mathbf{x},\boldsymbol{\lambda},\mathbf{\rho})+\sum_i{\lambda_ir_i}$. This implies $F(\mathbf{x},\boldsymbol{\lambda},\mathbf{\rho})+\sum_i{\lambda_ir_i}=0$ because the left hand side of the constraint in \eqref{eq:fenchel} is always larger than or equal to 0 due to strong duality of the lower-level problem. This contradicts  $F(\mathbf{x},\boldsymbol{\lambda},\mathbf{\rho})+\sum_i{\lambda_ir_i}-\epsilon=0$.
\end{enumerate}
\end{proof}
\subsection{Proof for Lemma \ref{le:suff}}
\begin{proof}    We first prove that for $k\ge0$, $\z^{k}$ is a feasible point of \eqref{eq:subeps} for all $k\in\N$ by induction. Suppose this statement holds for some $k=d\ge0$, we prove it holds for $k=d+1$. Note that $(\x^{d+1},\blam^{d+1},\r^{d+1},\bro^{d+1})$ is optimal for problem \eqref{eq:subeps} with $\r^{d}$ and $\blam^{d}$.
     From the optimality of $\z^{d+1}$ to \eqref{eq:subeps}, we have
     \[h_j(\z^{d+1})\leq0,~\forall~j\in J~\text{ and }~\bar g^{d}(\z^{d+1})\leq0.\]
    Since $\bar g^{d}(\z^{d+1})\geq g (\z^{d+1})=\bar g^{d+1} (\z^{d+1})$ due to \cref{le:direct}, it follows that $\bar g^{d+1}(\z^{d+1})\leq0$ and thus $\z^{d+1}$ is feasible for problem \eqref{eq:subeps} with $k=d+1$. Finally, $\z^0$ is obviously feasible for \eqref{eq:subeps} with $k=0$ due to our special choice of $\blam^0$ and $\r^0$.

    Note that $(\x^{k+1},\blam^{k+1},\r^{k+1},\bro^{k+1})$ is optimal for problem \eqref{eq:subeps}. By the optimality of $\z^{k+1}$ and the feasibility of $\z^k$, we have
    \[L(\x^{k+1})+\frac{\beta}{2}\|\z^{k+1}-\z^k\|^2\leq L(\x^{k}),\]
    which proves (i). (Here $\x^0$ can be any feasible point.) Item (ii) directly follows from (i) and that $L$ is bounded below.
\end{proof}
\subsection{Proof for Theorem \ref{th:main}}
\begin{proof}
   (i) Assume subsequence $\{\z^{k_j}\}_{j\in\N}$ converges to $\z^*$.
     By Lemma \ref{le:cq}, the NNAMCQ holds at $\z^{k_j}$ for problem \eqref{eq:subeps} with 
     $k=k_j-1$. Moreover, the KKT optimality conditions implies that there exist acceptable Lagrange multipliers $\bmu^{k_j}=(\mu_0^{k_j},\mu_1^{k_j}\ldots,\mu_{\tau}^{k_j})$ such that \[\mu_0^{k_j}\bar g^{k_j}(\z^{k_j})=0, ~~\mu_i^{k_j}h_i(\z^{k_j})=0,~i=1,\ldots\tau\] and
     \begin{equation}\label{eq:partial_f}\small
         \bz\in \partial f^{k_j-1}(\z^{k_j})+\mu_0^{k_j}\partial \bar g^{k_j-1}(\z^{k_j})+\sum_{i=1}^{\tau}\mu_i^{k_j}\partial h_i(\z^{k_j})+N_{\cX}(\z^{k_j}).
     \end{equation}
     We then claim $\{\bmu^{l_j}\}_{j\in\N}$ is bounded.
      Since $\|\r^{l_j}-\r^{l_j-1}\|\rightarrow0$ and $\|\blam^{l_j}-\blam^{l_j-1}\|\rightarrow0$ by Lemma \ref{le:suff}, from $\z^{l_j}\rightarrow\z^*$ and \cref{def:maj} we have
\begin{equation}\label{eq:partial1}
    \partial \sum_{i=1}^\tau m(\lambda_i,r_i;\lambda_i^{l_j-1},r_i^{l_j-1}) \mid_{(\lambda_i^{l_j},r_i^{l_j})}\to\partial \sum_{i=1}^\tau\lambda_ir_i\mid_{(\lambda_i^{*},r_i^{*})}.
\end{equation}
According to Exercise 8.8(c) in \cite{rockafellar2009variational} and  the expressions of $\bar g^k $ and $g$, \eqref{eq:partial1} yields
     \[\limsup_{j\to\infty}\partial \bar g^{l_j-1}(\z^{l_j})\subseteq\partial g (\z^*).\]
     If $\{\bmu^{l_j}\}_{j\in N}$ is unbounded,
     by passing to a further subsequence if necessary, we assume$\bmu^{l_j}/\|\bmu^{l_j}\|\to\bar\bmu$. Note that $\|\bmu^{l_j}\|\to\infty$ and $\partial f(\z^{l_j})$ is bounded.
     Multiplying $\frac1{\|\bmu^{l_j}\|}$ on the right-hand side of \eqref{eq:partial_f} and taking limit to \eqref{eq:partial_f}, we have
     \[
     \begin{array}{l}
     \bz\in \bar{\mu}_0\partial g(\z^*) + \sum_{i=1}^m\bar{\mu}_i\partial h_i(\z^*)+N_{\cX}(\z^*)\text{ with }\|\bar{\bmu}\|=1,\\
 \bar\mu_0  g(\x^*)=0,~\bar \mu_i h_i(\x^*)=0,i\in[\tau],~\bar\mu_i\geq0,~i=0,\ldots,\tau.
         \end{array}
     \]
which contradicts the NNAMCQ at $\z^*$. Therefore, $\{\bmu^{l_j}\}_{j\in N}$ is bounded.

     Then we assume $\bmu^{k_j}\rightarrow\bmu^*$ by passing to a subsequence if necessary. Note that $\limsup_{j\to\infty}\partial f^{k_j}(\z^{k_j})\subseteq\partial f(\z^*)$ also holds because of the expression of $f^k$ and the outer semi-continuity of $\partial L(\x)$ (see, e.g., Definition 5.4 and Proposition 8.7 in \cite{rockafellar2009variational}).
  By $\z^{k_j}\rightarrow\z^*$, $\limsup_{j\to\infty}\partial \bar g^{k_j-1}(\z^{k_j})\subseteq\partial g (\z^*)$ and $\limsup_{j\to\infty}\partial f^{k_j}(\z^{k_j})\subseteq\partial f(\z^*)$, \eqref{eq:partial_f} yields
     \[\bz\in \partial f(\z^*)+\mu_0^*\partial g (\z^*)+\sum_{i=1}^{\tau}\mu_i^{*}\partial h_i(\z^{*})+N_{\cX}(\z^*),\]
     which says that $\z^*$ is a KKT point of \eqref{eq:blpeps}.

(ii) We construct a simple merit function for our model, which is defined by
$$%E(\mathbf{z},\mathbf{u})=L(\mathbf{x})+\frac{\beta}{4}\|\mathbf{z}-\mathbf{u}\|^2+\delta _{\mathcal{X}}(\mathbf{z})+\delta _{\{\mathbf{z}:g(\mathbf{z})\leq0\}}(\mathbf{z})+\sum _{j=1}^{\tau}\delta _{\{\mathbf{z}:h _j(\mathbf{z})\leq0\}}(\mathbf{z}).
G(\z)=L(\mathbf{x})+\delta _{\mathcal{X}}(\mathbf{z})+\delta _{\{\mathbf{z}:g(\mathbf{z})\leq0\}}(\mathbf{z})+\sum _{j=1}^{\tau}\delta _{\{\mathbf{z}:h _j(\mathbf{z})\leq0\}}(\mathbf{z}).
$$
By the feasibility of $\z^{k+1}$ in \eqref{eq:subeps} and Definition \ref{def:maj}, $h_j(\z^{k+1})\leq0$, $j\in[\tau]$ and $g(\z^{k+1})\leq \bar{g}^k(\z^{k+1})\leq0$. Hence,
\[G\left(\z^{k+1}\right)=L(\x^{k+1})\text{ for all }k\in\N.\]
This, together with $L(\x^{k+1})\leq L(\x^k)-\frac{\beta}{2}\left\|\z^{k+1}-\z^k\right\|^2$ given by Lemma \ref{le:suff}, yields
\begin{equation}\label{eq:G_suff}
    G\left(\z^{k+1}\right)\leq G\left(\z^{k}\right)-\frac{\beta}{2}\left\|\z^{k+1}-\z^k\right\|^2.
\end{equation}
We then establish the relative error, which needs to estimate $\dist(\bz,\partial G(\z))$. To begin with, by the convexity of $L(\x)$ and  $P_i(\x)$, $L(\x)$ and $h_j(\z)$, $j\in[\tau]$ are regular. By the convexity of $F$ and weak convexity of $\sum_{i=1}^{\tau}\lambda_ir_i$, $g(\z)$ is regular due to Proposition 4.5 in \cite{vial1983strong}. Then, $\delta _{\{\mathbf{z}:g(\mathbf{z})\leq0\}}(\mathbf{z})$ and $\delta _{\{\mathbf{z}:h _j(\mathbf{z})\leq0\}}(\mathbf{z})$, $j\in[\tau]$ are regular by Exercise 8.14 in \cite{rockafellar2009variational}. It follows from Corollary 10.9 in \cite{rockafellar2009variational} that 
\[\partial G(\z)=\partial L(\x)+\partial\delta_{\cX}(\z)+\partial\delta_{\{\mathbf{z}:g(\mathbf{z})\leq0\}}(\mathbf{z}) +\sum_{j=1}^{\tau}\partial\delta _{\{\mathbf{z}:h _j(\mathbf{z})\leq0\}}(\mathbf{z}).\]
Since Exercise 8.14 in \cite{rockafellar2009variational} ensures that $\partial\delta_{\cX}(\z)=N_{\cX}(\z)$, $\partial\delta_{\{\mathbf{z}:g(\mathbf{z})\leq0\}}(\mathbf{z})=N_{\{\mathbf{z}:g(\mathbf{z})\leq0\}}(\z)$, and $\partial\delta _{\{\mathbf{z}:h _j(\mathbf{z})\leq0\}}(\mathbf{z})=N_{\{\mathbf{z}:h _j(\mathbf{z})\leq0\}}(\mathbf{z})$, $j\in[\tau]$, we have
\[ \partial G(\z)= \partial L(\x)+N_{\cX}(\z)+N_{\{\mathbf{z}:g(\mathbf{z})\leq0\}}(\mathbf{z}) +\sum_{j=1}^{\tau}N_{\{\mathbf{z}:h _j(\mathbf{z})\leq0\}}(\mathbf{z}).\]
By Corollary 10.50 in \cite{rockafellar2009variational}, it follows that $\partial G(\z)=\nabla L(\x)+N_{\cX}(\z)+\mu_0\partial g(\mathbf{z})(\mathbf{z}) +\sum_{j=1}^{\tau}\mu_i\partial h _j(\mathbf{z})$, where $\mu_0,\mu_j\geq0$, $\mu_0g(\z)=0$, and $\mu_jh_j(\z)=0$ for $j\in[\tau]$. Thus,
\begin{equation}\label{eq:subdiff_G}
\begin{array}{cc}
     \partial G(\z^{k})=&\{\partial L(\x^k)+N_{\cX}(\z^k)+\mu^k_0\partial g(\mathbf{z}^k) +\sum_{j=1}^{\tau}\mu^k_i\partial h _j(\mathbf{z}^k):\\
    &\quad\mu^k_0,\mu^k_j\geq0,\mu^k_0g(\z^k)=0,\mu^k_jh_j(\z^k)=0 \text{ for }j\in[\tau]\}.
\end{array}
\end{equation}
%where $\mu^k_0,\mu^k_j\geq0$, $\mu^k_0g(\z^k)=0$, and $\mu^k_jh_j(\z^k)=0$ for $j\in[\tau]$. 
Note that $\partial f^{k-1}
(\z^k)=\partial L(\x^k)+\beta(\z^k-\z^{k-1})$ and that Definition \ref{def:maj} implies \[\partial  \bar{g}^{k-1}(\z^k)=\partial g(\z^k)+\sum_{i=1}^{\tau}\left(\nabla m(\lambda_i^k,r_i^k;\lambda_i^{k-1},r_i^{k-1})-\nabla m(\lambda_i^k,r_i^k;\lambda_i^{k},r_i^{k})\right).\]
%where $\mu_0^k$ is an acceptable Lagrange multiplier.
Comparing \eqref{eq:subdiff_G} and the optimality conditions \eqref{eq:partial_f}, we have \[\bz\in \partial G(\z^k)+\beta(\z^k-\z^{k-1})+\mu_0^k\sum_{i=1}^{\tau} (\nabla m(\lambda_i^k,r_i^k;\lambda_i^{k-1},r_i^{k-1})-\nabla m(\lambda_i^k,r_i^k;\lambda_i^{k},r_i^{k})),\]
where $\mu_0^k$ is an acceptable Lagrange multiplier.
This is equivalent to
\[\beta(\z^{k-1}-\z^k)+ \sum_{i=1}^{\tau}\left(\nabla m(\lambda_i^k,r_i^k;\lambda_i^{k},r_i^{k})-\nabla m(\lambda_i^k,r_i^k;\lambda_i^{k-1},r_i^{k-1})\right)\in \partial G(\z^k).\]
By the triangle inequality, it follows that
\begin{equation}\label{eq:distGk}
\begin{aligned}
     &\dist\left(\bz,\partial G(\z^k)\right)\\
     \leq& \beta\left\|\z^{k-1}-\z^k\right\|+\mu_0^k\sum_{i=1}^{\tau}\left\|\nabla m(\lambda_i^k,r_i^k;\lambda_i^{k},r_i^{k})-\nabla m(\lambda_i^k,r_i^k;\lambda_i^{k-1},r_i^{k-1}) \right\|.
\end{aligned}
\end{equation}
Recall that we have proved that sub-sequence $\{\mu^{k_j}\}_{j\in\N}$ is bounded in (i) by the NNAMFCQ if $\{\z^{k_j}\}_{j\in\N}$ converges. This, together with the boundedness of $\{\z^k\}_{k\in\N}$ ensures that $\{\mu^k\}_{k\in\N}$ is bounded. Thus, 
\begin{equation}\label{eq:mu0kM}
\mu_0^k\leq M_1\text{ for some }    M_1>0.
\end{equation}
Furthermore, by the local Lipschitz continuity of $\nabla m(\xi,\zeta;\bar\xi,\bar\zeta)$ w.r.t. $(\bar\xi,\bar\zeta)$ given by Definition \ref{def:maj}, for $\{\z^k\}_{k\in\N}$, there exists $M_2>0$ such that 
\begin{equation}\label{eq:mrk}
\begin{aligned}
      \|\nabla m(\lambda_i^k,r_i^k;\lambda_i^{k},r_i^{k})-\nabla m(\lambda_i^k,r_i^k;\lambda_i^{k-1},r_i^{k-1}) \|&\leq M_2\|(\lambda_i^k-\lambda_i^{k-1},r_i^k-r_i^{k-1})\|\\
      &\leq M_2\left\|\z^{k-1}-\z^k \right\|,
\end{aligned}
\end{equation}
where the second inequality directly follows from $\z=(\x,\blam,\r,\bro)$. Combining \eqref{eq:distGk}, \eqref{eq:mu0kM}, and \eqref{eq:mrk}, we have
\begin{equation}\label{eq:relative}
    \dist\left(\bz,\partial G(\z^k)\right)
     \leq (\beta+\tau M_1M_2)\left\|\z^{k-1}-\z^k\right\|.
\end{equation}
Next, we verify that $G$ satisfies the Kurdyka-Lojasiewicz (KL) property. It suffices to show that $G$ is a semi-algebraic function by \cite{bolte2007lojasiewicz}. Note that it has been shown by \cite{attouch2010,attouch2013convergence,facchinei2003finite,bolte2014proximal} that the semi-algebraic property is preserved under many operations such as finite sum, product, and partial maximization operations. Moreover, the epi-graphs of semi-algebraic functions are semi-algebraic sets and indicator functions of semi-algebraic sets are semi-algebraic functions. As a direct consequence, $L(\x),g(\z)$, and $h_j(\z),j\in[\tau]$ are all semi-algebraic functions. $\cX,\{\z:g(\z)\leq0\}$, and $\{\z:h_j(\z)\leq0\}$, $j\in[\tau]$ are semi-algebraic sets and $\delta_{\{\z:g(\z)\leq0\}},\delta_{\{\z:h_j(\z)\leq0\}},j\in[\tau]$ are semi-algebraic functions. Hence, $G$ is semi-algebraic and satisfies the KL property.

Finally, combining \eqref{eq:G_suff}, \eqref{eq:relative}, and that $G$ satisfies the KL property, Theorem 2.9 in \cite{attouch2013convergence} implies the convergence of $\{\z^k\}_{k\in\N}$ and that the limiting point $\z^*$ is a stationary of $G$, i.e., $\bz\in\partial G(\z^*)$. By \eqref{eq:subdiff_G}, this is equivalent to that $\z^*$ is a KKT point of \eqref{eq:blpeps}. The proof is complete.

\end{proof}
\section{Reformulations and Subproblems of LDMMA for Different Models}\label{sec:a}
In this section, we consider several widely used machine learning models to illustrate that the constraints in \eqref{eq:refsub} can often be represented by conic inequalities. We choose $m(\xi,\zeta;\bar\xi,\bar\zeta) = \frac{1}{2}\left(\frac{\bar\xi}{\bar\zeta}\zeta^2+\frac{\bar\zeta}{\bar\xi}\xi^2\right)$ to demonstrate the tractability of our subproblems when $\r^k,\blam^k>0$\footnote{We note that for the case that $r^k_i=0$ or $\lbd^k_i=0$, we can choose $m$ by \eqref{eq:dca_m} and conclude similar results.}.% Specifically, we show that \eqref{eq:refsub} can be reformulated into a conic program under this setting.
\subsection{Elastic Net and Sparse Group Lasso}
We recall the elastic net problem
\begin{equation*}\label{eq:elastic_appen}
    \begin{array}{ll}
         \min\limits_{\x} & L(\x)=\frac12\|A_{val}\x-\b_{val}\|^2\\
         {\rm s.t.} & \x \in{\argmin}\frac12\|A_{tr}\x-\b_{tr}\|^2+\lambda_1\|\x\|_1+\frac{\lambda_2}{2}\|\x\|_2^2
    \end{array}
\end{equation*}
and the sparse group lasso problem
\begin{equation*}\label{eq:sparse}
    \begin{array}{ll}
         \min\limits_{\x} & L(\x)=\frac12\|A_{val}\x-\b_{val}\|^2\\
         {\rm s.t.} & \x \in{\argmin}\frac12\|A_{tr}\x-\b_{tr}\|^2+\lambda_{M+1}\|\x\|_1+\sum\limits_{i=1}^M\lambda_i\|\x^{(i)}\|_2,
    \end{array}
\end{equation*}
where $\x^{(i)}$ is a sub-vector of $\x$.

The following two propositions show the explicit formulation of \eqref{eq:refsub}, which are closely related to \eqref{eq:subeps}, subproblems of the proposed LDMMA.
\begin{proposition}%\label{pro:elastic_net}
    For the elastic net problem with linearly independent $\{\mathbf{a}_i:i\in I_{tr}\}$, let $A_{tr}=(\mathbf{a}_i)_{i\in I_{tr}},\b_{tr}=(b_i)_{i\in I_{tr}}$ denote the training data and $A_{val}=(\mathbf{a}_i)_{i\in I_{val}},\b_{val}=(b_i)_{i\in I_{val}}$ denote the validation data, then \eqref{eq:refsub}  can be reformulated into the following conic program:
     \begin{equation*}
    \begin{array}{lcl}
     &\min & t\\
    &{\rm s.t.} & \left\|\begin{pmatrix}
             A_{tr}\x-\b_{tr},
         \sqrt{\frac{{\lbd}^0_1}{r^0_1}}r_1,
        \sqrt{\frac{ {r}^0_1}{ {\lbd}^0_1}}\lbd_1,
        \sqrt{\frac{{\lbd}^0_2}{r^0_2}}r_2,
        \sqrt{\frac{ {r}^0_2}{ {\lbd}^0_2}}\lbd_2,
        \w+\b_{tr},
      \frac{s}{\|\b_{tr}\|_2}
         \end{pmatrix}\right\|_2
         \leq
         \|\b_{tr}\|_2-\frac{1}{\|\b\|_2}s, \\
            & &  A_{tr}^T\w+ {\bro}_1+\bro_2=\bz,\\
  & & \|\x\|_1\leq r_1,\ \|\bro_1\|_{\infty}\leq\lambda_1,\\
   & &  \left\|\begin{pmatrix}
         \x\\
         r_2-\frac12
     \end{pmatrix}\right\|_2\leq r_2+\frac12,\
     \left\|\begin{pmatrix}
        \sqrt{2}\bro_2\\
        s-\lbd_2
    \end{pmatrix}\right\|_2\leq s+\lbd_2, \\
    & & \left\|\begin{pmatrix}
        A_{val}\x-\b_{val}\\
        t-\frac12
    \end{pmatrix} \right\|_2\leq t+\frac12.\\
    \end{array}
\end{equation*}
\end{proposition}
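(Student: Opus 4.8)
The plan is to instantiate the abstract constraint of \eqref{eq:refsub} with the three atom functions of the elastic net, namely the training loss $l(\x)=\tfrac12\|A_{tr}\x-\b_{tr}\|_2^2$ and the two regularizers $P_1(\x)=\|\x\|_1$, $P_2(\x)=\tfrac12\|\x\|_2^2$ (so $\tau=2$), and then to show term by term that the resulting convex constraint is second-order-cone (SOC) representable. Recalling \eqref{eq:f}, the single nonlinear constraint of \eqref{eq:refsub} reads
\[
l(\x)+l^*\!\Big(-\!\sum_{i=1}^2\bro_i\Big)+\lambda_1 P_1^*\!\Big(\tfrac{\bro_1}{\lambda_1}\Big)+\lambda_2 P_2^*\!\Big(\tfrac{\bro_2}{\lambda_2}\Big)+\sum_{i=1}^2 m(\lambda_i,r_i;\lambda_i^0,r_i^0)\le 0,
\]
together with $P_1(\x)\le r_1$ and $P_2(\x)\le r_2$. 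First I would compute the three conjugates. For the $\ell_1$ atom, $P_1^*$ is the indicator of the unit $\ell_\infty$ ball, so after the perspective scaling $\lambda_1 P_1^*(\bro_1/\lambda_1)$ is exactly the linear constraint $\|\bro_1\|_\infty\le\lambda_1$ (using the convention in the footnote when $\lambda_1=0$). For the squared $\ell_2$ atom, $P_2^*(\u)=\tfrac12\|\u\|_2^2$, so $\lambda_2 P_2^*(\bro_2/\lambda_2)=\tfrac1{2\lambda_2}\|\bro_2\|_2^2$, which is the perspective of $P_2^*$ and hence jointly convex; I would epigraph it with a scalar $s$ through $\|\bro_2\|_2^2\le 2\lambda_2 s$, a rotated SOC $\|(\sqrt2\,\bro_2,\,s-\lambda_2)\|_2\le s+\lambda_2$.

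The key step is the conjugate of the loss. Writing $l=q\circ(A_{tr}\cdot-\b_{tr})$ with $q=\tfrac12\|\cdot\|_2^2$ self-conjugate, the composition rule gives $l^*(\y)=\inf\{\tfrac12\|\w\|_2^2+\b_{tr}^T\w:\ A_{tr}^T\w=\y\}$, finite exactly when $\y\in\operatorname{range}(A_{tr}^T)$. This is where the full-row-rank hypothesis enters: it makes $A_{tr}^T$ injective, so the dual residual $\w$ solving $A_{tr}^T\w=\y$ is unique and the infimum collapses to the closed form $l^*(-\!\sum_i\bro_i)=\tfrac12\|\w+\b_{tr}\|_2^2-\tfrac12\|\b_{tr}\|_2^2$ under the single linear coupling $A_{tr}^T\w+\bro_1+\bro_2=\bz$; without full row rank the same expression survives but one must additionally certify $\y\in\operatorname{range}(A_{tr}^T)$, which is the extra linear constraint mentioned in the remark. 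Using \eqref{eq:dca_m1} with $\bar\xi,\bar\zeta>0$, each majorization $m(\lambda_i,r_i;\lambda_i^0,r_i^0)=\tfrac12(\tfrac{\lambda_i^0}{r_i^0}r_i^2+\tfrac{r_i^0}{\lambda_i^0}\lambda_i^2)$ is a sum of two squared monomials $\big(\sqrt{\lambda_i^0/r_i^0}\,r_i\big)^2$ and $\big(\sqrt{r_i^0/\lambda_i^0}\,\lambda_i\big)^2$.

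Assembling everything, the nonlinear constraint becomes
\[
\tfrac12\|A_{tr}\x-\b_{tr}\|_2^2+\tfrac12\|\w+\b_{tr}\|_2^2+\sum_{i=1}^2 m(\lambda_i,r_i;\lambda_i^0,r_i^0)+s\ \le\ \tfrac12\|\b_{tr}\|_2^2,
\]
which I would recast as one large shifted SOC. After multiplying by $2$ its left side is a sum of squares, say $\|\u\|_2^2$, bounded by $\|\b_{tr}\|_2^2-2s$; completing the square shows $\|\u\|_2^2\le\|\b_{tr}\|_2^2-2s$ is equivalent to $\big\|(\u,\,s/\|\b_{tr}\|_2)\big\|_2\le \|\b_{tr}\|_2-s/\|\b_{tr}\|_2$, because squaring the latter cancels the $s^2/\|\b_{tr}\|_2^2$ terms and reproduces exactly the $-2s$ linear term (the required nonnegativity of the right side is implied by $s\le\tfrac12\|\b_{tr}\|_2^2$). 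This is the big SOC of the statement, with $\u$ stacking $A_{tr}\x-\b_{tr}$, the four majorization monomials, and $\w+\b_{tr}$. The remaining rotated SOCs encode $P_2(\x)=\tfrac12\|\x\|_2^2\le r_2$ as $\|(\x,\,r_2-\tfrac12)\|_2\le r_2+\tfrac12$ and the epigraph $L(\x)=\tfrac12\|A_{val}\x-\b_{val}\|_2^2\le t$ as $\|(A_{val}\x-\b_{val},\,t-\tfrac12)\|_2\le t+\tfrac12$, with objective $t$.

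Finally I would verify equivalence of the two feasible regions in both directions: given any point feasible for \eqref{eq:refsub}, set $\w$ to the unique solution of $A_{tr}^T\w=-(\bro_1+\bro_2)$, $s=\tfrac1{2\lambda_2}\|\bro_2\|_2^2$, and $t=L(\x)$, and check each (rotated) SOC; conversely, projecting out $\w,s,t$ recovers a feasible point with the same objective, since $s\ge\lambda_2 P_2^*(\bro_2/\lambda_2)$ enters with a positive sign and is driven to equality. The main obstacle is the loss-conjugate step together with the homogenization trick: one must both handle the domain condition $\y\in\operatorname{range}(A_{tr}^T)$ correctly (the precise place the full-row-rank assumption is used, and the source of the extra constraint in its absence) and recognize that the linear $-2s$ term can be absorbed into a single norm cone via the $s/\|\b_{tr}\|_2$ entries, rather than treated as a separate affine constraint; the $\ell_\infty$ and rotated-SOC pieces are then routine.
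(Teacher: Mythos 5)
Your proposal is correct and follows essentially the same route as the paper's proof (Lemmas in Appendix B): compute the conjugates of the atoms $\|\cdot\|_1$ and $\tfrac12\|\cdot\|_2^2$, use the full-row-rank hypothesis to reduce $l^*$ to $\tfrac12\|\w+\b_{tr}\|_2^2-\tfrac12\|\b_{tr}\|_2^2$ on the range constraint $A_{tr}^T\w+\bro_1+\bro_2=\bz$, absorb the linear term $-2s$ into the large second-order cone via the $s/\|\b_{tr}\|_2$ completion-of-the-square identity, and encode the remaining quadratic epigraphs as rotated cones. The only cosmetic difference is that you derive the loss conjugate through the affine-composition rule rather than the paper's direct stationarity computation, which is the same argument in substance.
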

\begin{proposition}\label{pro:sgl}
    For the sparse group lasso problem with linearly independent $\{\mathbf{a}_i:i\in I_{tr}\}$, let $A_{tr}=(\mathbf{a}_i)_{i\in I_{tr}},\b_{tr}=(b_i)_{i\in I_{tr}}$ denote the training data and $A_{val}=(\mathbf{a}_i)_{i\in I_{val}},\b_{val}=(b_i)_{i\in I_{val}}$ denote the validation data, then \eqref{eq:refsub}  can be reformulated into the following conic program:
     \begin{equation*}
    \begin{array}{lcl}
     &\min & t\\
    &{\rm s.t.} &\left\|\begin{pmatrix}
             A_{tr}\x-\b_{tr},
         \sqrt{\frac{{\lbd}^0_1}{r^0_1}}r_1,
        \sqrt{\frac{ {r}^0_1}{ {\lbd}^0_1}}\lbd_1,\ldots
        \sqrt{\frac{{\lbd}^0_{M+1}}{r^0_{M+1}}}r_{M+1},
        \sqrt{\frac{ {r}^0_{M+1}}{ {\lbd}^0_{M+1}}}\lbd_{M+1},
        \w+\b_{tr}
         \end{pmatrix}\right\|_2
         \leq
         \|\b_{tr}\|_2,\\
            & & A_{tr}^T\w+\sum\limits_{i=1}^{M+1}\boldsymbol{\rho}_i=\bz,\\
 & & \|\x\|_1\leq r_{M+1},\ \|\bro_1\|_{\infty}\leq\lambda_{M+1},\\
    & & \|\x^{(i)}\|_2\leq r_{i},\ \|\bro_i\|_2\leq\lbd_i\ \ \text{for }i=1,2,\ldots,M,\\
    & & \left\|\begin{pmatrix}
        A_{val}\x-\b_{val}\\
        t-\frac12
    \end{pmatrix} \right\|_2\leq t+\frac12.\\
    \end{array}
\end{equation*}
\end{proposition}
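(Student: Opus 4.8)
The plan is to take the single nonlinear constraint $F(\x,\blam,\bro)+\sum_{i=1}^{M+1} m(\lambda_i,r_i;\lambda_i^0,r_i^0)\le 0$ of the subproblem \eqref{eq:refsub}, insert the closed-form conjugates of the atom functions together with the Cauchy majorization \eqref{eq:dca_m1}, and show that the resulting inequality --- combined with the epigraph form of the objective and the constraints $P_i(\x)\le r_i$ --- is exactly a system of second-order cone (SOC) constraints. Throughout I use $l(\x)=\tfrac12\|A_{tr}\x-\b_{tr}\|^2$, $P_{M+1}(\x)=\|\x\|_1$, $P_i(\x)=\|\x^{(i)}\|_2$ for $i\le M$, and the definition of $F$ in \eqref{eq:f}.

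The key computation is the conjugate of the least-squares loss. Since $l^*(\y)=\sup_\x\{\y^T\x-\tfrac12\|A_{tr}\x-\b_{tr}\|^2\}$ is finite only when $\y$ lies in the row space of $A_{tr}$, I would introduce an auxiliary variable $\w$ and encode $-\sum_{i=1}^{M+1}\bro_i=A_{tr}^T\w$ as the linear constraint $A_{tr}^T\w+\sum_{i=1}^{M+1}\bro_i=\bz$. The full-row-rank hypothesis on $A_{tr}$ is precisely what makes $\x\mapsto A_{tr}\x$ surjective, so substituting $\u=A_{tr}\x$ and maximizing over all $\u\in\R^{|I_{tr}|}$ gives the closed form $l^*(-\sum_i\bro_i)=\tfrac12\|\w\|^2+\w^T\b_{tr}=\tfrac12\|\w+\b_{tr}\|^2-\tfrac12\|\b_{tr}\|^2$ after completing the square. (Dropping full row rank keeps $\u$ confined to $\mathrm{range}(A_{tr})$, which is what accounts for the one extra linear constraint noted in the text.) For the regularizers, the conjugate of a norm is the indicator of the dual unit ball, so the perspective terms $\lambda_i P_i^*(\bro_i/\lambda_i)$ contribute $0$ to $F$ exactly on the feasible sets $\{\|\bro_{M+1}\|_\infty\le\lambda_{M+1}\}$ and $\{\|\bro_i\|_2\le\lambda_i\}$; the convention $0\,P_i^*(\bro_i/0)=0$ iff $\bro_i=\bz$ is captured automatically by these constraints at $\lambda_i=0$.

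Substituting these expressions and $m(\lambda_i,r_i;\lambda_i^0,r_i^0)=\tfrac12(\tfrac{\lambda_i^0}{r_i^0}r_i^2+\tfrac{r_i^0}{\lambda_i^0}\lambda_i^2)$, multiplying by $2$, and moving $\|\b_{tr}\|^2$ to the right-hand side turns $F+\sum_i m\le0$ into the inequality $\|A_{tr}\x-\b_{tr}\|^2+\|\w+\b_{tr}\|^2+\sum_{i=1}^{M+1}(\tfrac{\lambda_i^0}{r_i^0}r_i^2+\tfrac{r_i^0}{\lambda_i^0}\lambda_i^2)\le\|\b_{tr}\|^2$, which is a single norm bound $\|(\cdots)\|_2\le\|\b_{tr}\|_2$ on the stacked vector displayed in the proposition, i.e.\ an SOC constraint. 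I would then recast the remaining pieces: the objective $L(\x)=\tfrac12\|A_{val}\x-\b_{val}\|^2$ via its epigraph, where $\tfrac12\|A_{val}\x-\b_{val}\|^2\le t$ is equivalent (by squaring the rotated form) to $\|(A_{val}\x-\b_{val},\,t-\tfrac12)\|_2\le t+\tfrac12$, while $\|\x\|_1\le r_{M+1}$ and $\|\x^{(i)}\|_2\le r_i$ are directly conic-representable. Collecting these gives the stated program, and the elastic-net proposition follows by the identical argument with the group norms replaced by the single ridge term.

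The main obstacle is the conjugate computation of the loss: getting the domain right (finiteness only on the row space, handled by introducing $\w$ and the linear equation) and verifying that full row rank is exactly what licenses the unconstrained maximization over $\u$ producing the clean quadratic $\tfrac12\|\w+\b_{tr}\|^2-\tfrac12\|\b_{tr}\|^2$. Everything downstream is careful but routine bookkeeping: expressing each convex quadratic or norm inequality as an SOC and checking that the perspective terms vanish on the dual-ball constraints.
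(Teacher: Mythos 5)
Your proposal is correct and follows essentially the same route as the paper's proof: the paper factors the argument through two lemmas (a general reformulation of \eqref{eq:refsub} using the norm conjugates as indicators of dual balls, and a conic rewriting of the least-squares conjugate $l^*(\y)=\tfrac12\|\w+\b\|_2^2-\tfrac12\|\b\|_2^2$ on $\y=A^T\w$ under the full-row-rank hypothesis), then specializes with $J_1=\{M+1\}$, $J_2=\{1,\dots,M\}$, $J_3=\emptyset$ and the same epigraph/rotated-SOC step for the objective. Your derivation of $l^*$ via surjectivity of $\x\mapsto A_{tr}\x$ is the same computation the paper does through the normal equations, and your remark on dropping full row rank matches the paper's.
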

As a comparison, the reformulation \eqref{eq:ref} for elastic net and sparse group lasso are respectively
\begin{equation}
    \begin{array}{lcl}
       &\min & \frac12\|A_{val}\x-\b_{val}\|_2^2 \\
    &{\rm s.t.} &
   \frac12\|A_{tr}\x-\b_{tr}\|_2^2 +\frac12\|\w+\b\|_2^2-\frac12\|\b\|_2^2+ \lambda_1r_1+\lambda_2r_2+\frac1{2\lambda_2}\|\bro_2\|_2^2 \leq 0,\\
  && A^T\w+\bro_1+\bro_2=\bz,\\
  &&  \|\x\|_1\leq r_1,\ \frac12\|\x\|_2^2\leq r_2,\|\bro\|_{\infty}\leq\lambda_1 ,  \quad \blam\ge0\\
    \end{array}
\end{equation}
and
\begin{equation}
    \begin{array}{lcl}
       &\min & \frac12\|A_{val}\x-\b_{val}\|_2^2 \\
    &{\rm s.t.} &
   \frac12\|A_{tr}\x-\b_{tr}\|_2^2 +\frac12\|\w+\b\|_2^2-\frac12\|\b\|_2^2+ \sum_{i=1}^{M+1}\lambda_ir_i\leq 0,\\
  && A^T\w+\sum_{i=1}^{M+1}\bro_i=\bz,\\
  &&  \|\x\|_1\leq r_{M+1},\ \|\bro_{M+1}\|_{\infty}\leq\lambda_{M+1}\ \|\x^{(i)}\|_2\leq r_{i},\ \|\bro_{i}\|_2\leq \lambda_i,\ i=1,\ldots M,  \quad \blam\ge0.\\
    \end{array}
\end{equation}
We use a unified proof for the two problems. To this end, let $P_i$ denote $\|\cdot\|_1$, $\|\cdot\|_2$ or $\frac12\|\cdot\|^2_2$ of $\x^{(i)}$, where $\x^{(i)}$ is a sub-vector of $\x$. Further, we let
\[\begin{array}{l}
     [\tau]=J_1\cup J_2\cup J_3, \\
     P_{i_1}=\|\cdot\|_1,\ i_1\in J_1,\\
     P_{i_2}=\|\cdot\|_2,\ i_2\in J_2,\\
     P_{i_3}=\frac12\|\cdot\|^2_2,\ i_3\in J_3.
\end{array}
\]

\begin{lemma}\label{pro:nonconvex}
   \eqref{eq:refsub} is equivalent to
   {
    \begin{equation}\label{eq:nonconvex}
    \begin{array}{lcl}
     &\min & L(\x) \\
    &{\rm s.t.} &      l(\x)+\sum_{i=1}^{\tau}\frac{\frac{{\lbd}^k_i}{ {r}^k_i}r_i^2+\frac{{r}^k_i}{{\lbd}^k_i}\lbd_i^2}{2}+l^*(-\sum\limits_{i=1}^{\tau}{\bro}_i)+\sum\limits_{i\in J_3}s_i \leq 0 \\
    & & \|\x \|_1\leq r_i,\ i\in J_1,\ \|\x \|_2\leq r_i,\ i\in J_2 \\
    & & \left\|\begin{pmatrix}
         \x \\
    r_i-\frac12
     \end{pmatrix}\right\|_2\leq r_i+\frac12,\ i\in J_3\\
 &  & \|\bro_i\|_{\infty}\leq\lambda_i\text{ for }i\in J_1,\ \|\bro_i\|_{2}\leq\lambda_i\text{ for }i\in J_2\\
   & & \left\|\begin{pmatrix}
        \sqrt{2}\bro_i\\
        s_i-\lbd_i
    \end{pmatrix}\right\|_2\leq s_i+\lbd_i\ \text{ for }i\in J_3. \\

    \end{array}
\end{equation}
}
\end{lemma}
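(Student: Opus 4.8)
The plan is to start from the subproblem \eqref{eq:refsub}, substitute the chosen majorization \eqref{eq:dca_m1}, namely $m(\xi,\zeta;\bar\xi,\bar\zeta)=\frac12\left(\frac{\bar\xi}{\bar\zeta}\zeta^2+\frac{\bar\zeta}{\bar\xi}\xi^2\right)$, and then expand $F(\x,\blam,\bro)$ from \eqref{eq:f} term by term, replacing each conjugate $P_i^*$ by its explicit form according to whether $i\in J_1$, $J_2$, or $J_3$. Substituting the majorization immediately turns $\sum_{i=1}^{\tau} m(\lambda_i,r_i;\lambda_i^k,r_i^k)$ into $\sum_{i=1}^{\tau}\frac12\left(\frac{\lambda_i^k}{r_i^k}r_i^2+\frac{r_i^k}{\lambda_i^k}\lambda_i^2\right)$, which is exactly the quadratic term in the constraint of \eqref{eq:nonconvex}, while the two terms $l(\x)$ and $l^*\left(-\sum_i\bro_i\right)$ are left untouched.

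Next I would compute the perspective terms $\lambda_i P_i^*(\bro_i/\lambda_i)$. Using the standard fact that the conjugate of a norm is the indicator of the dual-norm unit ball, for $i\in J_1$ (where $P_i=\|\cdot\|_1$) one gets $\lambda_i P_i^*(\bro_i/\lambda_i)=\delta_{\{\|\bro_i\|_{\infty}\le\lambda_i\}}$, and for $i\in J_2$ (where $P_i=\|\cdot\|_2$) one gets $\delta_{\{\|\bro_i\|_2\le\lambda_i\}}$. Since these are $\{0,+\infty\}$-valued, they are equivalent to imposing the hard constraints $\|\bro_i\|_{\infty}\le\lambda_i$ and $\|\bro_i\|_2\le\lambda_i$ and deleting the corresponding term from the constraint inequality. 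For $i\in J_3$ (where $P_i=\frac12\|\cdot\|_2^2$), the conjugate is $P_i^*(\y)=\frac12\|\y\|_2^2$, so its perspective is the quadratic-over-linear term $\frac{1}{2\lambda_i}\|\bro_i\|_2^2$.

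I would then epigraph the $J_3$ contribution: introduce $s_i$ with $\frac{1}{2\lambda_i}\|\bro_i\|_2^2\le s_i$ and replace $\sum_{i\in J_3}\frac{1}{2\lambda_i}\|\bro_i\|_2^2$ by $\sum_{i\in J_3}s_i$ in the constraint. This is an equivalent reformulation because $s_i$ enters only the single inequality in which it appears with a positive sign, so any optimal solution drives $s_i$ down to $\frac{1}{2\lambda_i}\|\bro_i\|_2^2$. Finally I would cast the quadratic constraints as rotated second-order cones: $\frac{1}{2\lambda_i}\|\bro_i\|_2^2\le s_i$ is equivalent to $\|\bro_i\|_2^2\le 2\lambda_i s_i$, i.e. $\|(\sqrt 2\,\bro_i,\,s_i-\lambda_i)\|_2\le s_i+\lambda_i$ (a direct squaring check), and $P_i(\x)=\frac12\|\x^{(i)}\|_2^2\le r_i$ is equivalent to $\|(\x^{(i)},\,r_i-\tfrac12)\|_2\le r_i+\tfrac12$. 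The remaining constraints $\|\x^{(i)}\|_1\le r_i$ for $i\in J_1$ and $\|\x^{(i)}\|_2\le r_i$ for $i\in J_2$ are already in the required form, so chaining these term-by-term substitutions yields \eqref{eq:nonconvex}.

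The main obstacle, though largely bookkeeping, is to verify the perspective computation together with the convention $0\cdot P_i^*(\bro_i/0)=0$ from the footnote after \eqref{eq6}, so that the degenerate case $\lambda_i=0$ is handled consistently. In particular, for $i\in J_3$ one must check that setting $\lambda_i=0$ in the rotated-cone representation $\|(\sqrt 2\,\bro_i,\,s_i)\|_2\le s_i$ forces $\bro_i=\bz$ and $s_i\ge0$, whence minimization gives $s_i=0$, matching the convention; analogous checks apply to the indicator terms for $i\in J_1,J_2$. Once the $\lambda_i=0$ boundary behaviour is reconciled with the convention, the two rotated-SOC identities are elementary, and the equivalence of \eqref{eq:refsub} and \eqref{eq:nonconvex} follows.
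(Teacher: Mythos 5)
Your proposal is correct and follows essentially the same route as the paper's proof: substitute the explicit conjugates (indicator of the dual-norm ball for $i\in J_1,J_2$, the quadratic $\frac12\|\cdot\|_2^2$ for $i\in J_3$), introduce the epigraph variables $s_i$ for the $J_3$ terms, and convert the quadratic-over-linear and $\frac12\|\x\|_2^2\le r_i$ constraints into rotated second-order cones via the identities $s_i\lambda_i=\frac{(s_i+\lambda_i)^2-(s_i-\lambda_i)^2}{4}$ and $2r_i=(r_i+\frac12)^2-(r_i-\frac12)^2$. Your additional care with the $\lambda_i=0$ boundary case is consistent with the paper's footnote convention but goes slightly beyond what the paper spells out.
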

\begin{proof}
We first simplify $F$ defined by \eqref{eq:f}.
    Note that \text{for }$i\in J_1$ \[\ P_i^*(\y)=\begin{cases}
        0,\text{ if } \|\y\|_{\infty}\leq1\\
        \infty\text{ otherwise},
    \end{cases}\] \text{for }$i\in J_2$\[ P_i^*(\y)=\begin{cases}
        0,\text{ if } \|\y\|_{2}\leq1\\
        \infty\text{ otherwise},
    \end{cases}\]
    and for $i\in J_3$ $P_i(\y)=\frac12\|\y\|_2^2$. For $i\in J_3$, we introduce $\lbd_iP_i^*(\frac{\rho_i}{\lbd_i})\leq s_i$. Then the constraints of \eqref{eq:refsub} amount to
        \[ \begin{cases}
          l(\x)+\sum_{i=1}^{\tau} \frac{\frac{{\lbd}^k_i}{ {r}^k_i}r_i^2+\frac{ {r}^k_i}{ {\lbd}^k_i}\lbd_i^2}{2}+l^*(-\sum\limits_{i=1}^{\tau}{\bro}_i)\leq 0\\
           \|\x \|_1\leq r_i,\ i\in J_1,\ \|\x \|_2\leq r_i,\ i\in J_2 \\
    \frac12\|\x \|_2^2\leq r_i,\ i\in J_3\\
          \|\bro_i\|_{\infty}\leq\lambda_i,\ i\in J_1,\ \|\bro_i\|_{2}\leq\lambda_i,\ i\in J_2,\\
           \frac{\|\bro_i\|_2^2}{\lambda_i}\leq2 s_i\,\ i\in J_3.
    \end{cases} \]
    By $s_i\lbd_i=\frac{(s_i+\lbd_i)^2-(s_i-\lbd_i)^2}{4}$, $2r_i=(r_i+\frac12)^2-(r_i-\frac12)^2$ and taking square root, $\frac{\|\bro_i\|_2^2}{\lambda_i}\leq2 s_i$ and $\frac12\|\x \|_2^2\leq r_i $ are further equivalent to second-order cone constraints
    \[\left\|\begin{pmatrix}
        \sqrt{2}\bro_i\\
        s_i-\lbd_i
    \end{pmatrix}\right\|_2\leq s_i+\lbd_i,\    \left\|\begin{pmatrix}
         \x \\
         r_i-\frac12
     \end{pmatrix}\right\|_2\leq r_i+\frac12\]
    which concludes the result.
\end{proof}
% Actually, the conjugate functions of $l$ are very simple in many applications. For example, for widely used Least Square Loss $l(\x)=\|A\x-b\|_2^2$, $l^*$ is also a quadratic function if $A$ is full column rank. If $A$ is not full column rank, we can also replace $l^*$ with a quadratic function and add an affine constraint. For more details, please see the appendix.
%However, the obtained problem in Corollary \ref{pro:nonconvex} is still non-convex because of the item $\lbd_ir_i$. We wish to approximate this with a convex term so that the problem can be solved by convex solvers. A natural approach is to apply Cauchy inequality $\lambda_i r_i\leq \frac{\frac{{\lbd}^0_i}{ {r}^0_i}r_i^2+\frac{ {r}^0_i}{ {\lbd}^0_i}\lbd_i^2}{2}$ for some positive scalars $r_i^0,\lambda_i^0>0$, which leads to the following proposition.
Many practical problems of interest choose the least square error as the loss function, i.e., $l(\x)=\frac12\|A\x-\b\|_2^2$. Particularly, we consider $A^T$ to be of full row rank, i.e., the feature vectors of the data are linearly independent. In this case, \eqref{eq:refsub} is further equivalent to a conically constrained convex problem.
\begin{lemma}
    \label{co:ls}
If $l(\x)=\frac12\|A\x-\b\|_2^2$ with $A\in\R^{m\times n}$ being of full row rank, \eqref{eq:refsub} can be further written as
  {
    \begin{equation}\label{eq:conic_ls}
    \begin{array}{lcl}
     &\min & L(\x) \\
    &{\rm s.t.} &
        { \left\|\begin{pmatrix}
             A\x-\b,
             \ldots,
         \sqrt{\frac{{\lbd}^k_i}{r^k_i}}r_i,
        \sqrt{\frac{ {r}^k_i}{ {\lbd}^k_i}}\lbd_i,
        \ldots,
        \w+\b,
      \frac{\sum\limits_{i\in J_3} s_i}{\|\b\|_2}
         \end{pmatrix}\right\|_2}\leq
         \|\b\|_2-\frac{1}{\|\b\|_2}\sum\limits_{i\in J_3} s_i \\
   &     &    A^T\w+ \sum\limits_{i=1}^{\tau}{\bro}_i=\bz\\
& & \|\x \|_1\leq r_i,\ i\in J_1,\ \|\x \|_2\leq r_i,\ i\in J_2 \\
  & &   \left\|\begin{pmatrix}
         \x \\
         r_i-\frac12
     \end{pmatrix}\right\|_2\leq r_i+\frac12,\ i\in J_3\\
  & & \|\bro_i\|_{\infty}\leq\lambda_i,\ i\in J_1,\ \|\bro_i\|_2\leq\lambda_i,\ i\in J_2\\
 &   & \left\|\begin{pmatrix}
        \sqrt{2}\bro_i\\
        s_i-\lbd_i
    \end{pmatrix}\right\|_2\leq s_i+\lbd_i,\ i\in J_3. \\
    \end{array}
\end{equation}
}
\end{lemma}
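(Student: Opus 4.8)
The plan is to start from \cref{pro:nonconvex}, which already rewrites \eqref{eq:refsub} as \eqref{eq:nonconvex}; the cone constraints there on $\x$, $\bro_i$, $r_i$, $s_i$, $\lbd_i$ are already in the required second-order-cone form, so the only work left is to specialize the single scalar constraint containing $l$ and $l^*$ to the least-squares loss $l(\x)=\frac12\|A\x-\b\|_2^2$ and to cast it as one more second-order cone constraint together with the linear equality appearing in \eqref{eq:conic_ls}.

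First I would compute the conjugate $l^*$. Writing $l^*(\y)=\sup_{\x}\{\y^T\x-\frac12\|A\x-\b\|_2^2\}$ and setting the gradient to zero gives $A^T(A\x-\b)=\y$; introducing $\w=A\x-\b$ turns this into $A^T\w=\y$, which is solvable exactly when $\y\in\operatorname{range}(A^T)$, and a short computation using $\y^T\x=\w^T(A\x)=\w^T(\w+\b)$ gives the optimal value $\frac12\|\w+\b\|_2^2-\frac12\|\b\|_2^2$. Here the full-row-rank hypothesis is exactly what makes $A^T$ injective, so the $\w$ solving $A^T\w=\y$ is unique; hence, taking $\y=-\sum_{i=1}^{\tau}\bro_i$, I may replace $l^*(-\sum_{i=1}^{\tau}\bro_i)$ by $\frac12\|\w+\b\|_2^2-\frac12\|\b\|_2^2$ while adjoining the new variable $\w$ together with the linear constraint $A^T\w+\sum_{i=1}^{\tau}\bro_i=\bz$, which simultaneously enforces $-\sum_{i=1}^{\tau}\bro_i\in\operatorname{dom}l^*$.

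After this substitution the nonlinear constraint of \eqref{eq:nonconvex} reads $\frac12\|A\x-\b\|_2^2+\frac12\sum_{i=1}^{\tau}(\frac{\lbd_i^k}{r_i^k}r_i^2+\frac{r_i^k}{\lbd_i^k}\lbd_i^2)+\frac12\|\w+\b\|_2^2+\sum_{i\in J_3}s_i\le\frac12\|\b\|_2^2$. Multiplying through by $2$ and collecting all squared terms into the single vector $\v$ appearing inside the norm of \eqref{eq:conic_ls}, so that $\|\v\|_2^2=\|A\x-\b\|_2^2+\sum_{i=1}^{\tau}(\frac{\lbd_i^k}{r_i^k}r_i^2+\frac{r_i^k}{\lbd_i^k}\lbd_i^2)+\|\w+\b\|_2^2$, and writing $S=\sum_{i\in J_3}s_i$ and $d=\|\b\|_2$, the inequality becomes $\|\v\|_2^2+2S\le d^2$. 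Adding $S^2/d^2$ to both sides and using $d^2-2S+S^2/d^2=(d-S/d)^2$ turns this into $\|(\v,S/d)\|_2^2\le(d-S/d)^2$, i.e.\ the stated constraint $\|(\v,S/d)\|_2\le d-S/d$; the passage to the square root is legitimate because $s_i\ge0$ forces $S\ge0$ and $\|\v\|_2^2\ge0$ forces $2S\le d^2$, so the right-hand side $d-S/d$ is nonnegative.

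The main obstacle is the second step: computing the conjugate correctly and, in particular, justifying that the single linear equality $A^T\w+\sum_{i=1}^{\tau}\bro_i=\bz$ faithfully captures both the domain of $l^*$ and its value. This is precisely where full row rank is used, as it guarantees uniqueness of $\w$ so that no extra minimization over $\w$ is needed; when $A$ is not of full row rank, $\w$ ranges over an affine subspace and one must minimize $\|\w+\b\|_2^2$ over it, which is what produces the additional linear constraint mentioned in the discussion around \cref{pro:elastic_net}. The remaining manipulations — the cone rewritings of the $\|\cdot\|_2$ and $\frac12\|\cdot\|_2^2$ atoms inherited from \cref{pro:nonconvex}, and the completing-the-square step above — are routine.
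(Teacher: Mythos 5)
Your proposal is correct and follows essentially the same route as the paper's proof: starting from \cref{pro:nonconvex}, computing $l^*$ via the stationarity condition and using the full-row-rank of $A$ (injectivity of $A^T$) to pin down $\w=A\x^*-\b$ as the unique solution of $A^T\w=-\sum_{i=1}^{\tau}\bro_i$, then completing the square in $\sum_{i\in J_3}s_i$ to obtain the single second-order cone constraint. Your justification of the square-root step (nonnegativity of the right-hand side from $s_i\ge0$ and $\|\v\|_2^2\ge0$) is in fact slightly more explicit than the paper's, and your account of what fails without the rank condition matches the paper's remark.
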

\begin{proof}
    We first compute $l^*(\y):=\max\limits_{\x}\ \y^T\x-\frac12\|A\x-\b\|_2^2$, which is
    \[l^*(\y)=\max\limits_{\x}\ -\frac12\x^TA^TA\x+\x^T\left(A^T\b+\y\right)-\frac12\|\b\|^2.\]
    By Example 9.1.1 in \cite{boyd2004convex}, if the above problem is solvable, $A^TA\x^*=A^T\b+\y$ is solvable with $\x^*$ being the optimal solution. This is equivalent to $\y=A^T\w$ for some $\w\in\R^m$ and then the full column rank of $A$ yields $A\x^*=\b+\w$. Substituting it into $l^*(\y)$, we have $l^*(\y)=\frac12\|\w+\b\|_2^2-\frac12\|\b\|_2^2$, hence
    \[l^*(\y)=\begin{cases}
       \frac12\|\w+\b\|_2^2-\frac12\|\b\|_2^2 \text{ if } \y=A^T\w\\
        \infty\text{ otherwise}.
    \end{cases}\]
    Then the first constraint of \eqref{eq:nonconvex} can be replaced with
    {
    \[\begin{cases}
        \|A\x-\b\|_2^2+\sum\limits_{i=1}^{\tau}\left\|\begin{pmatrix}
            \sqrt{\frac{{\lbd}^0_i}{r^0_i}}r_i\\
             \sqrt{\frac{ {r}^0_i}{ {\lbd}^0_i}}\lbd_i
        \end{pmatrix}\right\|^2_2+ \|\w+\b\|_2^2+2\sum\limits_{i\in J_3}s_i\leq\|\b\|_2^2\\
        A^T\w+ \sum\limits_{i=1}^{\tau}\bro_i=\bz.
    \end{cases}\]
    }
By using \[\|\b\|^2_2-2\sum\limits_{i\in J_3} s_i=\left(\|\b\|_2-\frac{\sum\limits_{i\in J_3} s_i}{\|\b\|_2}\right)^2- \left(\frac{\sum\limits_{i\in J_3} s_i}{\|\b\|_2}\right)^2\]
and taking the square root, we conclude the result.
\end{proof}

 Now we are ready to give proofs for Propositions \ref{pro:elastic_net} and \ref{pro:sgl}.
\begin{proof}
Based on the above propositions, we give the expressions of the subproblems of the elastic net and sparse group lasso. Note that the expression of \eqref{eq:ref} is very similar to that of \eqref{eq:refsub}, we omit the augments for simplicity.

For the elastic net problem, $J_1=\{1\}$ and $J_2=\emptyset$ and $J_3=\{2\}$. Then the conclusion follows from \cref{co:ls} and introducing the variable $t$ such that $t\geq L(\x)=\frac12\|A_{val}\x-\b_{val}\|_2^2$, which is equivalent to $\left\|\begin{pmatrix}
        A_{val}\x-\b_{val}\\
        t-\frac12
    \end{pmatrix} \right\|_2\leq t+\frac12$.

    For the sparse group lasso problem, we let $J_2=\{1,2,\ldots,M\}$ and $J_1=\{M+1\}$ and the augments are the same as that of \cref{pro:elastic_net}.
\end{proof}
\begin{remark}
   {  Without the linear independence of the data, we can still obtain a conic program for the subproblem but with one extra linear constraint.} In fact, the linear independence, i.e., the full column rank of $A^T$, is only used in the proof of Lemma \ref{co:ls}  to yield $A\mathbf{x}^*=\mathbf{b}+\mathbf{w}$ from $A^TA\mathbf{x}^*=A^T\mathbf{b}+\mathbf{y}$ and $\mathbf{y}=A^T\mathbf{w}$. Without this condition, we could have
$$
A\mathbf{x}^*=\mathbf{b}+\mathbf{w}+\mathbf{v}\text{ with } A^T\mathbf{v}=\mathbf{0},
$$
and the arguments of conic program reformulation still go through. 
\end{remark}
\subsection{Support Vector Machine}
We now consider the support vector machine (SVM) problem
\begin{equation}\label{eq:SVMprimal}
    \begin{array}{ll}
         \min\limits_{\w,c} & L(\w,c)=\sum\limits_{j\in I_{val}}\max(1-b_j(\w^\top \a_j-c),0) \\
         {\rm s.t.} & \w\in\underset{-\overline{\w}\leq\w\leq\overline{\w}}{\argmin}\sum\limits_{j\in I_{tr}}\max(1-b_j(\w^\top \a_j-c),0)+\frac{\lambda}{2}\|\w\|_2^2.
    \end{array}
\end{equation}
\begin{proposition}
For the SVM problem, let $B_{tr}=\operatorname{diag}\{b_j,j\in I_{tr}\}$ denote the diagonal matrix whose diagnal elements consist of $\{b_i:i\in I_{tr}\}$ in sequence. Let $A_{tr}=(\a_i)_{i\in I_{tr}}$ denote the training data (without labels), $\w,\r_2\in\R^p$ and $[r_2]_j$ denotes the $j$-th element of $\r_2$. Given $\overline{\w}^0,\r_2^0>0$, \eqref{eq:refsub} for problem \eqref{eq:SVMprimal} can be reformulated as
\begin{equation}\label{eq:svm1}
    \begin{array}{lcl}
        & \min & L(\w,c)=\sum\limits_{j\in I_{val}}\max(1-b_j(\w^\top \a_j-c),0) \\
        & {\rm s.t.} & \sum\limits_{j\in I_{tr}}\max(1-b_j(\w^\top \a_j-c),0)+\frac12(\frac{\lambda^0}{r_1^0}r_1^2+\frac{r_1^0}{\lambda^0}\lambda^2)+\frac12\sum\limits_{j=1}^p\left(\frac{\overline{w}_j^0}{[r_2^0]_j}{[r_2]_j}^2+\frac{[r_2^0]_j}{\overline{w}_j^0}\overline{w}_j^2\right)+s-\mathbf{1}^T \v\leq0,\\
       &&   \frac{1}{2}\|\w\|^2\leq r_1,\\
       &&   \left\|\begin{matrix}
             \sqrt{2}\rho\\ \lambda-s
         \end{matrix}\right\|_2\leq \lambda+s,\\
     &&     \left(\begin{matrix}
        A_{tr}^T\\ \mathbf{1}^T
        \end{matrix}\right)B_{tr}\mathbf{v}+\left(\begin{matrix}
        -\bro\\0
        \end{matrix}\right)+\left(\begin{matrix}
         {\boldsymbol{\alpha}}_2- {\boldsymbol{\alpha}}_1\\0
    \end{matrix}\right)=0,\\
        && \boldsymbol{\alpha}_1+\boldsymbol{\alpha}_2=\r_2,\\
     &&    0\leq \mathbf{v}\leq1,\boldsymbol{\alpha}_1,\boldsymbol{\alpha}_2\geq0,\\
         &&-\overline{\w}\leq\w\leq\overline{\w}.
    \end{array}
\end{equation}
Moreover, \eqref{eq:ref} for SVM is
\begin{equation}\label{eq:svm2}
    \begin{array}{lcl}
        & \min & L(\w,c)=\sum\limits_{j\in I_{val}}\max(1-b_j(\w^\top \a_j-c),0) ,\\
        & {\rm s.t.} & \sum\limits_{j\in I_{tr}}\max(1-b_j(\w^\top \a_j-c),0)+r_1\lambda+\r_2^\top\overline{\w}+s-\mathbf{1}^T \v\leq0,\\
      &&    \frac{1}{2}\|\w\|^2\leq r_1,\\
        &&  \left\|\begin{matrix}
             \sqrt{2}\bro\\ \lambda-s
         \end{matrix}\right\|_2\leq \lambda+s,\\
    &&      \left(\begin{matrix}
        A_{tr}^T\\ \mathbf{1}^T
        \end{matrix}\right)B_{tr}\v+\left(\begin{matrix}
        -\bro\\
        0
        \end{matrix}\right)+\left(\begin{matrix}
         {\boldsymbol{\alpha}}_2- {\boldsymbol{\alpha}}_1\\0
    \end{matrix}\right)=0,\\
    && \boldsymbol{\alpha}_1+\boldsymbol{\alpha}_2=\r_2,\\
   &&      0\leq \v\leq1,\boldsymbol{\alpha}_1,\boldsymbol{\alpha}_2\geq0,\\
         &&-\overline{\w}\leq\w\leq\overline{\w}.
    \end{array}
\end{equation}
\end{proposition}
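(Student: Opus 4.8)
The plan is to specialize the Fenchel reformulation of Theorem~\ref{th:fenchel} to the three SVM atoms and then convexify and conify exactly as in Lemma~\ref{co:ls}. First I would split the lower-level objective of \eqref{eq:SVMprimal}, viewed over $\x=(\w,c)$, into the training hinge loss $g_0(\w,c)=\sum_{j\in I_{tr}}\max(1-b_j(\w^\top\a_j-c),0)$, the smooth regularizer $g_1(\w,c)=\frac{\lambda}{2}\|\w\|_2^2$, and the box indicator $g_2(\w,c)=\delta_{[-\overline{\w},\overline{\w}]}(\w)$, the latter two being independent of $c$. These are proper closed convex and share a common relative-interior point (any $\w$ strictly inside the box with arbitrary $c$), so Theorem~\ref{th:fenchel} applies and produces the single constraint $g_0+g_1+g_2+l^*\big(-(\bro_1+\bro_2)\big)+g_1^*(\bro_1)+g_2^*(\bro_2)\le0$; here $g_2(\w,c)=0$ forces the primal feasibility $-\overline{\w}\le\w\le\overline{\w}$ that appears in \eqref{eq:svm1}--\eqref{eq:svm2}.

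The heart of the derivation is computing the three conjugates with respect to $(\w,c)$. For the hinge loss I would use the variational form $\max(u,0)=\max_{0\le v\le1}vu$ to write $g_0(\w,c)=\max_{0\le\v\le1}\mathbf{1}^\top\v-\v^\top B_{tr}(A_{tr}\w-c\mathbf{1})$, and then invoke a minimax exchange (legitimate since $\v$ ranges over a compact set and the payoff is bilinear) to obtain $l^*(\y,\eta)=\min\{-\mathbf{1}^\top\v:\ A_{tr}^\top B_{tr}\v=-\y,\ \mathbf{1}^\top B_{tr}\v=\eta,\ 0\le\v\le1\}$. Evaluating at $\y=-(\bro_1+\bro_2)$ and $\eta=0$ (the latter because $g_1,g_2$ are $c$-free and hence force the $c$-dual components to vanish) produces both the term $-\mathbf{1}^\top\v$ and, upon identifying $\bro=\bro_1$ and $\boldsymbol{\alpha}_2-\boldsymbol{\alpha}_1=-\bro_2$, the linear block $\binom{A_{tr}^\top}{\mathbf{1}^\top}B_{tr}\v+\binom{-\bro}{0}+\binom{\boldsymbol{\alpha}_2-\boldsymbol{\alpha}_1}{0}=0$. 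For $g_1$ I use $g_1^*(\bro_1)=\frac{1}{2\lambda}\|\bro_1\|_2^2$ and bound it by a scalar $s$, which the identity $s\lambda=\frac14((s+\lambda)^2-(s-\lambda)^2)$ turns into the rotated cone $\|(\sqrt2\bro,\lambda-s)\|_2\le\lambda+s$; for the box I compute $g_2^*(\bro_2)=\overline{\w}^\top|\bro_2|$ and introduce $\r_2\ge|\bro_2|$ through the splitting $\boldsymbol{\alpha}_1+\boldsymbol{\alpha}_2=\r_2$, $\boldsymbol{\alpha}_2-\boldsymbol{\alpha}_1=-\bro_2$, $\boldsymbol{\alpha}_1,\boldsymbol{\alpha}_2\ge0$, which upper-bounds $\overline{\w}^\top|\bro_2|$ by the bilinear term $\r_2^\top\overline{\w}$, tight because $\overline{\w}\ge0$. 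Introducing $r_1$ with $\frac12\|\w\|_2^2\le r_1$ to rewrite the $g_1$-contribution as $\lambda r_1$ and collecting all pieces gives precisely \eqref{eq:svm2}.

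Finally, to pass from \eqref{eq:svm2} to the subproblem \eqref{eq:svm1} I would replace each bilinear coupling between a hyperparameter and its auxiliary variable, namely $\lambda r_1$ and every component product $[r_2]_j\overline{w}_j$, by the majorization \eqref{eq:dca_m1} at the base point $(\lambda^0,r_1^0)$, respectively $([r_2^0]_j,\overline{w}_j^0)$; this is valid by Definition~\ref{def:maj} and yields the quadratic terms $\frac12(\frac{\lambda^0}{r_1^0}r_1^2+\frac{r_1^0}{\lambda^0}\lambda^2)$ and $\frac12\sum_j(\frac{\overline{w}_j^0}{[r_2^0]_j}[r_2]_j^2+\frac{[r_2^0]_j}{\overline{w}_j^0}\overline{w}_j^2)$ of \eqref{eq:svm1}. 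The remaining nonlinear pieces $\frac12\|\w\|_2^2\le r_1$, the $g_1^*$ bound, and the epigraph of the convex objective $L$ are then written as second-order cone constraints by the same rotated-cone identities used in Lemma~\ref{co:ls}. I expect the main obstacle to be the hinge-loss conjugate: correctly carrying the free intercept $c$ through the minimax (it is exactly what enforces $\mathbf{1}^\top B_{tr}\v=0$) and checking that treating $\v$ as a free primal variable faithfully represents the inner minimization defining $l^*$, given that the outer problem places no proximal penalty on $\v$.
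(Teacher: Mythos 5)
Your derivation is correct and lands on exactly the constraint systems \eqref{eq:svm1}--\eqref{eq:svm2}, but it is organized differently from the paper's proof. The paper does \emph{not} treat the box $-\overline{\w}\le\w\le\overline{\w}$ as a Fenchel atom: it keeps the box as explicit inequality constraints of the lower-level problem, attaches Lagrange multipliers $\boldsymbol{\alpha}_1,\boldsymbol{\alpha}_2\ge0$ to them, folds them into an augmented loss $l_g(\w,c)=l(\w,c)+\boldsymbol{\alpha}_1^\top(\w-\overline{\w})+\boldsymbol{\alpha}_2^\top(-\w-\overline{\w})$, and then computes $l_g^*$ by introducing epigraph variables $u_j\ge 1-b_j(\w^\top\a_j-c)$, $u_j\ge0$ and dualizing the resulting linear program (the variable $\v$ is the LP dual multiplier $\bga_2$, with $0\le\v\le\mathbf{1}$ coming from $\bga_1+\bga_2=\mathbf{1}$, $\bga_1,\bga_2\ge0$). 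You instead stay entirely inside the Fenchel-atom framework of Theorem~\ref{th:fenchel}: the box enters as an indicator whose conjugate is the support function $\overline{\w}^\top|\bro_2|$, which you then linearize via the splitting $\boldsymbol{\alpha}_1+\boldsymbol{\alpha}_2=\r_2$, $\boldsymbol{\alpha}_1-\boldsymbol{\alpha}_2=\bro_2$, and the hinge conjugate is obtained from the variational form $\max(u,0)=\max_{0\le v\le1}vu$ plus a bilinear minimax exchange rather than an explicit LP dual. These are equivalent dual computations, and all the auxiliary variables end up playing identical roles; what your route buys is uniformity with the reformulation machinery of Section~\ref{sec:fenchel} (notable since the SVM lower-level problem is constrained, so the paper's own proof has to step outside the unconstrained setting of Theorem~\ref{th:fenchel} and mix in Lagrangian duality), at the price of having to verify the relative-interior condition for the indicator atom and to justify the minimax swap, both of which you do. The remaining steps --- bounding $\lambda P^*(\bro/\lambda)=\frac{1}{2\lambda}\|\bro\|_2^2$ by $s$ via the rotated-cone identity, introducing $\frac12\|\w\|_2^2\le r_1$, forcing the $c$-component of the dual variables to vanish (which yields the row $\mathbf{1}^\top B_{tr}\v=0$), and majorizing the bilinear terms $\lambda r_1$ and $[r_2]_j\overline{w}_j$ with \eqref{eq:dca_m1} --- coincide with the paper's. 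Only cosmetic care is needed with the orientation of $A_{tr}$ (columns versus rows of $\a_i$) so that the block equation reads $\bigl(\begin{smallmatrix}A_{tr}^T\\ \mathbf{1}^T\end{smallmatrix}\bigr)B_{tr}\v$ as in the statement.
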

We remark that \eqref{eq:svm1} is also equivalent to a conic program by the same augments of Lemma \ref{co:ls} and the proofs are omitted for the sake of brevity.
\begin{proof}
The lower-level problem can be written as
\begin{equation*}
    \begin{array}{ll}
        \min\limits_{\w,c,\z} & l(\w,c)+\lambda P(\z)  \\
        {\rm s.t.} & \w=\z,\quad g_1(\w,c)\leq0,\ g_2(\w,c)\leq0,
    \end{array}
\end{equation*}
where $l(\w,c)=\sum\limits_{j\in I_{tr}}\max(1-b_j(\w^\top a_j-c),0)$, $P=\frac12\|\cdot\|_2^2$, $g_1(\w,c)=\w-\overline{\w}$ and $g_2(\w,c)=-\w-\overline{\w}$. The strong duality holds since the constraints are linear, then the above problem is further equivalent to
\begin{equation*}
    \begin{array}{lll}
        \quad\max\limits_{\rho,\boldsymbol{\alpha}_1\geq0,\boldsymbol{\alpha}_2\geq0} & \min\limits_{\w,c,\z} & l(\w,c)+\lambda P(\z)+\bro^T(\w-\z)+\boldsymbol{\alpha}^T_1g_1(\w,c)+\boldsymbol{\alpha}^T_2g_2(\w,c) \\
       = \max\limits_{\bro,\boldsymbol{\alpha}_1\geq0,\boldsymbol{\alpha}_2\geq0} & -\max\limits_{\w,c,\z} & -\bro^\top\w-l(\w,c)+\rho^\top\z-\lambda P(\z)-(\boldsymbol{\alpha}^T_1g_1(\w,c)+\boldsymbol{\alpha}^T_2g_2(\w,c))\\
      =  \max\limits_{\bro,\boldsymbol{\alpha}_1\geq0,\boldsymbol{\alpha}_2\geq0} & & -l_g^*(-\bro,0)-\lambda P^*(\frac{\bro}{\lambda})\\
      =-\min\limits_{\bro,\boldsymbol{\alpha}_1\geq0,\boldsymbol{\alpha}_2\geq0} &&l_g^*(-\bro,0)+\lambda P^*(\frac{\bro}{\lambda}),
    \end{array}
\end{equation*}
where $l_g(\w,c)=l(\w,c)+\boldsymbol{\alpha}^T_1g_1(\w,c)+\boldsymbol{\alpha}^T_2g_2(\w,c)$. Then problem \eqref{eq:ref} for SVM is equivalent to
\begin{equation}\label{eq:llSVM}
    \begin{array}{ll}
         \min\limits_{\w,c} & L(\w,c) \\
         {\rm s.t.} & l(\w,c)+\lambda P(\w)\leq -l_g^*(-\bro)-\lambda P^*(\frac{\rho}{\lambda}).
    \end{array}
\end{equation}
Specifically, by Table \ref{table1}, we have
\begin{equation*}
    l_g(\w,c)=\sum\limits_{j\in I_{tr}} \max(1-b_j(\w^\top \a_j-c),0)+\boldsymbol{\alpha}^T_1(\w-\overline{\w})+\boldsymbol{\alpha}^T_2(-\w-\overline{\w}).
\end{equation*}
We calculate the conjugate function as follows
\begin{equation*}
    \begin{array}{lll}
        l_g^*(\y,t) & =\max\limits_{\w,c} & \{(\y^\top,t)(\w,c)^\top-\sum\limits_{j\in I_{tr}}\max(1-b_j(\w^\top a_j-c),0)-\boldsymbol{\alpha}^T_1(\w-\overline{\w})-\boldsymbol{\alpha}^T_2(-\w-\overline{\w})\}  \\
        & =\max\limits_{\w,c,\u} & \{(\y^\top,t)(\w,c)^\top-\sum\limits_{j\in I_{tr}}u_j-\boldsymbol{\alpha}^T_1(\w-\overline{\w})-\boldsymbol{\alpha}^T_2(-\w-\overline{\w})\}\\
        &\qquad \text{s.t.}&u_j\geq1-b_j(\w^\top a_j-c),u_j\geq0,j\in I_{tr}.\\
      \end{array}
\end{equation*}
Hence we have
\begin{equation}\label{eq:constrained}
\begin{array}{lll}
       l_g^*(\y,t) & =\max\limits_{\w,c,\u} & \{(\y^\top,t)(\w,c)^\top-\mathbf{1}^\top\u+(\boldsymbol{\alpha}^T_1+\boldsymbol{\alpha}^T_2)\overline{\w}-(\boldsymbol{\alpha}^T_1-\boldsymbol{\alpha}^T_2)\w\}\\
        & \qquad\text{s.t.}&u_j\geq1-b_j(\w^\top \a_j-c),z_j\geq0,j\in I_{tr}.\\
    \end{array}
\end{equation}
Note that \eqref{eq:constrained} is indeed a linear program and we simplify it by using duality.
Let $L_{\bga}$ denote the Lagrange function of \eqref{eq:constrained} and $\bga_1,\bga_2$ denote the multipliers. Then
\begin{equation}\label{eq:Lgamma}
    L_{\gamma}(\w,c,\u;\bga_1,\bga_2)=(\y,t)^T(\w,c)-\mathbf{1}^\top\z+(\boldsymbol{\alpha}_1+\boldsymbol{\alpha}_2)^T\overline{\w}-(\boldsymbol{\alpha}_1-\boldsymbol{\alpha}_2)^T\w+\bga_1^\top\u+\bga_2^\top(\u-\mathbf{1}+B_{tr}A_{tr}^\top\w-cB_{tr}\mathbf{1}).
\end{equation}
where $B_{tr}=\operatorname{diag}\{b_j,j\in I_{tr}\}$ is a diagonal matrix whose diagnal elements consist of $\{[b_{tr}]_i:i\in I_{tr}\}$ in sequence. By calculating the minimum value of the Lagrangian over $(\w,c,\u)$, we obtain the dual function as follows.

% using first order condition of \eqref{eq:constrained}, we substitute and calculate that
\begin{equation}\label{eq:llSVM2}
\begin{array}{lll}
       l_g^*(\y,t) & = \min\limits_{\y,t}&-\bga_2^\top\mathbf{1}+(\boldsymbol{\alpha}_1+\boldsymbol{\alpha}_2)^\top\overline{\w} \\
        & \qquad\text{s.t.}& \bga_1+\bga_2-\mathbf{1}=0,\\
        % &&\boldsymbol{\alpha}_1-\boldsymbol{\alpha}_2=0,\\
        &&   \left(\begin{matrix}
        \y\\t
    \end{matrix}\right)+\left(\begin{matrix}
        A_{tr}^\top\\ \mathbf{1}^T
    \end{matrix}\right)B_{tr}\bga_2+\left(\begin{matrix}
         {\boldsymbol{\alpha}}_2- {\boldsymbol{\alpha}}_1\\0
    \end{matrix}\right)=0.
    \end{array}
\end{equation}
 By introducing $\r_2=\boldsymbol{\alpha}_1+\boldsymbol{\alpha}_2$ and recalling $\boldsymbol{\alpha}_1,\boldsymbol{\alpha}_2\geq0$, we conclude that \eqref{eq:llSVM2} is equivalent to
\begin{equation}\label{eq:llSVM2_0}
\begin{array}{lll}
       l_g^*(\y,t) & = \min\limits_{\y,t}&-\bga_2^\top\mathbf{1}+\r_2^\top\overline{\w} \\
        & \qquad\text{s.t.}& \bga_1+\bga_2-\mathbf{1}=0,\\
        % &&\boldsymbol{\alpha}_1-\boldsymbol{\alpha}_2=0,\\
        && \boldsymbol{\alpha}_1+\boldsymbol{\alpha}_2=\r_2,\boldsymbol{\alpha}_1,\boldsymbol{\alpha}_2\geq0, \\
        &&   \left(\begin{matrix}
        \y\\t
    \end{matrix}\right)+\left(\begin{matrix}
        A_{tr}^\top\\ \mathbf{1}^T
    \end{matrix}\right)B_{tr}\bga_2+\left(\begin{matrix}
         {\boldsymbol{\alpha}}_2- {\boldsymbol{\alpha}}_1\\0
    \end{matrix}\right)=0.
    \end{array}
\end{equation}
Note that in \eqref{eq:llSVM}, $\lambda P^*(\frac{\bro}{\lambda})=\frac{\|\bro\|_2^2}{2\lambda}$. We introduce $\frac12\|\w\|_2^2\leq r_1,\frac{\|\bro\|_2^2}{2\lambda}\leq s$.
By combining \eqref{eq:llSVM} \eqref{eq:llSVM2} and using similar augments of Lemma \ref{pro:nonconvex}, we conclude that \eqref{eq:ref} is equivalent to \eqref{eq:svm2} for SVM. Using the expression $m_1(\lambda,r_1;\lambda^0,r_1^0) = \frac{1}{2}\left(\frac{\lambda^0}{r_1^0}r_1^2+\frac{r_1^0}{\lambda^0}\lambda^2\right)$ and $m_2(\overline{\w},\r_2;\overline{\w}^0,\r_2^0) = \frac{1}{2}\sum\limits_{j=1}^p\left(\frac{\overline{w}_j^0}{[r_2^0]_j}{[r_2]_j}^2+\frac{[r_2^0]_j}{\overline{w}_j^0}\overline{w}_j^2\right)$, \eqref{eq:svm1} follows.
\end{proof}
\subsection{Low-rank Matrix Completion}
We now discuss the low-rank matrix completion model and summarise problem as follows
\begin{equation}\label{eq:low_rank}
\begin{array}{cl}
     \min\limits_{\blam\in\R_+^{2G+1}} & \frac12\|M_{val}-X_{val}\bth\mathbf{1^T}-(Z_{val}\bbe\textbf{1}^T)^T-\Gamma\|_F^2 \\
     {\rm s.t.} & (\bbe,\Gamma)\in \argmin\limits_{\bbe,\Gamma}\frac12\|M_{tr}-X_{tr}\bth\mathbf{1^T}-(Z_{tr}\bbe\textbf{1}^T)^T-\Gamma\|_F^2\\
     & +\lambda_0\|\Gamma\|_*+\sum\limits_{g=1}^G\lambda_g\|\bth^{(g)}\|_2+\sum\limits_{g=1}^G\|\bbe^{(g)}\|_2
\end{array}
\end{equation}
where $M_{val}=\{M_{ij}\}_{(i,j)\in\Omega_{val}}, M_{tr}=\{M_{ij}\}_{(i,j)\in\Omega_{tr}}$, $X_{val}=(\x_i)_{i\in I_{val}},X_{tr}=(\x_i)_{i\in I_{tr}}$ and $Z_{val}=(\z_j)_{j\in I_{val}},Z_{tr}=(\z_j)_{j\in I_{tr}}$
\begin{proposition}
We denote the spectral norm of $W$ as $\|W\|_p$ and corresponding matrice as above. \eqref{eq:refsub} for problem \eqref{eq:low_rank} can be reformulated as
\begin{equation}\label{low-rank-convex}
    \begin{array}{cl}
        \min\limits_{\blam\in\R_+^{2G+1}} & \frac12\|M_{val}-X_{val}\bth\mathbf{1^T}-(Z_{val}\bbe\textbf{1}^T)^T-\Gamma\|_F^2 \\
        {\rm s.t.} & \frac12\|M_{tr}-X_{tr}\bth\mathbf{1^T}-(Z_{tr}\bbe\textbf{1}^T)^T-\Gamma\|_F^2+\operatorname{tr}(M_{tr}^TW)+\frac12\|W\|^2_F+\frac12\sum\limits_{g=0}^{2G}\frac{\lambda_g^0}{r_g^0}r_g^2+\frac{r_g^0}{\lambda_g^0}\lambda_g^2\leq0, \\
        &-\boldsymbol{\rho}_1+X_{tr}^TW\mathbf{1}=\bz,\\
        &-\boldsymbol{\rho}_2+Z_{tr}^TW^T\mathbf{1}=\bz,\\
        &\| {\Gamma}\|_*\leq r_0,\\
        &\left\|\bth^{(g)}\right\|_2\leq r_g,\ g=1,...,G,\\
        &\left\|\bbe^{(g)}\right\|_2\leq r_{g+G},\ g=1,...,G,\\
        &\left\|\boldsymbol{\rho}_1^{(g)}\right\|_2\leq \lambda_g,\ g=1,...,G,\\
        &\left\|\boldsymbol{\rho}_2^{(g)}\right\|_2\leq \lambda_{g+G},\ g=1,...,G,\\
        &\|W\|_p\leq \lambda_0.\\
    \end{array}
\end{equation}
Moreover, \eqref{eq:ref} for low-rank matrix completion is
\begin{equation}\label{low-rank-bilinear}
    \begin{array}{cl}
        \min\limits_{\blam\in\R_+^{2G+1}} & \frac12\|M_{val}-X_{val}\bth\mathbf{1^T}-(Z_{val}\bbe\textbf{1}^T)^T-\Gamma\|_F^2 \\
        {\rm s.t.} & \frac12\|M_{tr}-X_{tr}\bth\mathbf{1^T}-(Z_{tr}\bbe\textbf{1}^T)^T-\Gamma\|_F^2+\operatorname{tr}(M_{tr}^TW)+\frac12\|W\|^2_F+\sum\limits_{g=0}^{2G}\lambda_gr_g\leq0, \\
        &-\boldsymbol{\rho}_1+X_{tr}^TW\mathbf{1}=\bz\\
        &-\boldsymbol{\rho}_2+Z_{tr}^TW^T\mathbf{1}=\bz\\
        &\| {\Gamma}\|_*\leq r_0,\\
        &\left\|\bth^{(g)}\right\|_2\leq r_g,\ g=1,...,G,\\
        &\left\|\bbe^{(g)}\right\|_2\leq r_{g+G},\ g=1,...,G,\\
        &\left\|\boldsymbol{\rho}_1^{(g)}\right\|_2\leq \lambda_g,\ g=1,...,G,\\
        &\left\|\boldsymbol{\rho}_2^{(g)}\right\|_2\leq \lambda_{g+G},\ g=1,...,G,\\
        &\|W\|_p\leq \lambda_0.\\
    \end{array}
\end{equation}
    
\end{proposition}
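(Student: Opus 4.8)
The plan is to apply the same three-stage template already used for the elastic net, sparse group lasso, and SVM: first cast \eqref{eq:low_rank} into the Fenchel form \eqref{eq:ref} via \cref{pro2.1}, then evaluate the conjugate of the loss together with those of the atom penalties, and finally majorize the bilinear products $\sum_{g=0}^{2G}\lambda_g r_g$ by \eqref{eq:dca_m1} to reach the convex subproblem \eqref{eq:refsub}. Here the lower-level variable is the triple $(\bth,\bbe,\Gamma)$, the loss is $l(\bth,\bbe,\Gamma)=\frac12\|M_{tr}-X_{tr}\bth\mathbf{1}^T-(Z_{tr}\bbe\mathbf{1}^T)^T-\Gamma\|_F^2$, and the $2G+1$ atoms are the nuclear norm $\|\Gamma\|_*$ and the two families of group norms $\{\|\bth^{(g)}\|_2\}_{g=1}^G$ and $\{\|\bbe^{(g)}\|_2\}_{g=1}^G$. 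Introducing radius variables with $\|\Gamma\|_*\le r_0$, $\|\bth^{(g)}\|_2\le r_g$, and $\|\bbe^{(g)}\|_2\le r_{g+G}$ puts the penalty in the product form required by \cref{pro2.1} and accounts for all the norm-versus-radius constraints appearing in \eqref{low-rank-bilinear}.

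For the atom conjugates I would invoke the standard dual-norm identities. The conjugate of $\|\cdot\|_*$ is the indicator of the spectral-norm ball, so the perspective term contributed by the nuclear-norm atom is finite exactly on $\|W\|_p\le\lambda_0$; likewise the conjugate of each group-$\ell_2$ norm is the indicator of an $\ell_2$ ball, yielding $\|\bro_1^{(g)}\|_2\le\lambda_g$ and $\|\bro_2^{(g)}\|_2\le\lambda_{g+G}$. These furnish precisely the conic constraints common to \eqref{low-rank-bilinear} and \eqref{low-rank-convex}.

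The substantive step is the conjugate of the matrix least-squares loss, which I would carry out exactly as in \cref{co:ls}. Introducing a matrix dual variable $W$ associated with $\Gamma$ (the training fit residual, up to sign), the stationarity conditions in $\bth$ and $\bbe$ tie the vector duals to the adjoints of the two bilinear maps $\bth\mapsto X_{tr}\bth\mathbf{1}^T$ and $\bbe\mapsto(Z_{tr}\bbe\mathbf{1}^T)^T$. Since $\langle W,X_{tr}\bth\mathbf{1}^T\rangle=(X_{tr}^TW\mathbf{1})^T\bth$ and $\langle W,(Z_{tr}\bbe\mathbf{1}^T)^T\rangle=(Z_{tr}^TW^T\mathbf{1})^T\bbe$, the range (feasibility) conditions for $l^*$ tie $\bro_1$ and $\bro_2$ to the adjoint images $X_{tr}^TW\mathbf{1}$ and $Z_{tr}^TW^T\mathbf{1}$, giving the two linear equalities in \eqref{low-rank-bilinear}. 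Completing the square then gives $l^*=\frac12\|W+M_{tr}\|_F^2-\frac12\|M_{tr}\|_F^2$, whose nonconstant part is exactly the term $\operatorname{tr}(M_{tr}^TW)+\frac12\|W\|_F^2$ in the first constraint. Substituting $l$, this value of $l^*$, and the above norm constraints into \eqref{eq:ref} reproduces \eqref{low-rank-bilinear}; replacing each product $\lambda_g r_g$ by its majorization \eqref{eq:dca_m1} at $(\lambda_g^0,r_g^0)$ converts the first constraint into the convex surrogate $\frac12\sum_{g=0}^{2G}\left(\frac{\lambda_g^0}{r_g^0}r_g^2+\frac{r_g^0}{\lambda_g^0}\lambda_g^2\right)$ and yields \eqref{low-rank-convex}.

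The main obstacle is this loss conjugate: unlike the vector models, the coupling enters through the two bilinear maps above, and one must compute their adjoints carefully---tracking the factor $\mathbf{1}^T$ and the transpose---to see that feasibility of $l^*$ produces two distinct equalities rather than the single vector equality of the elastic net. As in the remark following \cref{co:ls}, if the relevant design matrices lack full rank an extra linear constraint of the form $A^T\mathbf{v}=\bz$ is required, but the conic reformulation is otherwise unchanged; since the nuclear- and spectral-norm constraints $\|\Gamma\|_*\le r_0$ and $\|W\|_p\le\lambda_0$ are semidefinite-representable, \eqref{low-rank-convex} is indeed a conic program.
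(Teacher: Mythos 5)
Your proposal is correct and follows essentially the same route as the paper's proof: it computes the conjugate of the matrix least-squares loss via stationarity of the bilinear maps (yielding the two linear equalities through the adjoints $W\mapsto X_{tr}^TW\mathbf{1}$ and $W\mapsto Z_{tr}^TW^T\mathbf{1}$ and the value $\operatorname{tr}(M_{tr}^TW)+\frac12\|W\|_F^2$), pairs the nuclear and group norms with their dual-norm-ball indicator conjugates, and then majorizes each product $\lambda_g r_g$ by \eqref{eq:dca_m1}. The only differences are cosmetic (you phrase the conjugate via completing the square rather than substituting an explicit maximizer, and you add remarks on rank deficiency and semidefinite representability), so no gap remains.
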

\begin{proof}
    We define
    \begin{equation*}
        l(\bth,\bbe,\Gamma)=\frac12\|M_{tr}-X_{tr}\bth\mathbf{1^T}-(Z_{tr}\bbe\textbf{1}^T)^T-\Gamma\|_F^2,
    \end{equation*}
    and compute its conjugate function
    \begin{equation*}
        l^*(\mathbf{u},\mathbf{v},W)=\max\limits_{\bth,\bbe,\Gamma}g(\bth,\bbe,\Gamma):=\max\limits\bth^T\mathbf{u}+\bbe^T\mathbf{v}+\operatorname{tr}(\Gamma^TW)-l(\bth,\bbe,\Gamma).
    \end{equation*}
    By first order condition, for optimal $(\bth^*,\bbe^*,\Gamma^*)$, it holds that
    \begin{eqnarray*}
        \nabla_\bth g(\bth,\bbe,\Gamma)& = & 0,\\
        \nabla_\bbe g(\bth,\bbe,\Gamma)& = & 0,\\
        \nabla_\Gamma g(\bth,\bbe,\Gamma)& = & 0,
    \end{eqnarray*}
    which are equivalent to
    \begin{eqnarray}
        \mathbf{u}+X_{tr}^T(M_{tr}-X_{tr}\bth\mathbf{1}^T-(Z_{tr}\bbe\mathbf{1}^T)^T-\Gamma)\mathbf{1}& = & \mathbf{0},\label{gra_bth}\\
        \mathbf{v}+Z_{tr}^T(M_{tr}-X_{tr}\bth\mathbf{1}^T-(Z_{tr}\bbe\mathbf{1}^T)^T-\Gamma)^T\mathbf{1}&=&\bz,\label{gra_bbe}\\
        W-(M_{tr}-X_{tr}\bth\mathbf{1}^T-(Z_{tr}\bbe\mathbf{1}^T)^T-\Gamma)=\bz.\label{gra_gamma}
    \end{eqnarray}
    Substituting \eqref{gra_gamma} into \eqref{gra_bbe} and \eqref{gra_bth}, we obtain
    \begin{equation}\label{eq:W}
    \u-X_{tr}^TW\mathbf{1}=\bz,\quad \v-Z_{tr}^TW^T\mathbf{1}=\bz.
    \end{equation}
    Hence $l^*(\u,\v,W)=+\infty$ if \eqref{eq:W} is not satisfied.  We choose $\bth^*=\bz,\bbe^*=\bz,\Gamma^*=M_{tr}+W$ and obtain that
    \begin{equation}\label{eq:low_rank_con}
        l^*(\u,\v,W)=\begin{cases}
    \operatorname{tr}(M_{tr}^TW)+\frac12\|W\|_F^2\text{ if } \eqref{eq:W} \text{ holds},\\
    +\infty\text{  otherwise.}
    \end{cases}
    \end{equation}
    By combining \eqref{eq:low_rank_con} and using similar augments of Lemma \ref{pro:nonconvex}, we conclude that \eqref{eq:low_rank} is equivalent to for low-rank matrix completion. Using the expression $m(\lambda_g,r_g;\lambda_g^0,r_g^0)=\frac12\left(\frac{\lambda_g^0}{r_g^0}r_g^2+\frac{r_g^0}{\lambda_g^0}\lambda_g^2\right)$, \eqref{low-rank-convex} follows.

\end{proof}

\section{Detials for Experiments and Data}\label{detail-data}
\subsection{Elastic Net}\label{detail_sy_ela}
The generation of feature matrix $A\in\R^{n\times p}$ and response vector $\mathbf{b}\in\R^n$ follows \cite{feng2018gradient}, where the column vectors ${\a_i\in\R^p:i\in I_{tr}\setminus I_{val}}$ satisfy the marginal distribution $N(\mathbf{0},\mathbf{I})$, and the correlation matrix between column vectors satisfies $cor(a_{ij},a_{ik})=0.5^{|j-k|}$. The feature matrix is full rank and satisfies the conditions in Lemma \ref{co:ls}. Next, we generate a random vector $\boldsymbol{\beta}\in\R^p$ with 15 non-zero elements, where each element $\beta_i$ is either 0 or 1. The response vector $\mathbf{b}$ is obtained by applying the feature matrix to the random vector and adding a certain amount of noise, i.e., $\mathbf{b}=\mathbf{A}\boldsymbol{\beta}+\sigma\boldsymbol{\epsilon}$, where we set the signal-to-noise ratio to $\sigma=2$ and the noise $\boldsymbol{\epsilon}\sim N(\mathbf{0},\mathbf{I}_n)$. Random search is implemented using 100 uniformly random samples. The variable space of TPE is set to a uniform distribution on $[-5,2]$ for both $u_1$ and $u_2$. We follow \cite{gao2022value} and use the same parameter settings and stopping criteria to implement the VF-iDCA algorithm. For LDMMA algorithm, we set the initial point to $\boldsymbol{\lambda}^0=(0.01,0.01)$. For the $\epsilon$-perturbation problem \eqref{eq:blpeps}, we set $\epsilon=0.01$.

\subsection{Sparse Group Lasso}
The generation of the feature matrix $A\in\R^{n\times p}$ and the response vector $\mathbf{b}\in\R^n$ follows \cite{feng2018gradient}. The generated dataset includes $n$ training samples, $n/3$ validation samples, and 100 fixed testing samples. The observation matrix $\mathbf{A}$ satisfies that each column vector $\a_i$ follows the standard normal distribution. The random vector $\boldsymbol{\beta}=[\boldsymbol{\beta}^{(1)},\boldsymbol{\beta}^{(2)},\boldsymbol{\beta}^{(3)}]\in\R^p$, where $\boldsymbol{\beta}^{(i)}=(1,2,3,4,5,0,...,0)$. The response vector $\mathbf{b}$ is generated by applying the feature matrix to the random vector and adding some noise. Specifically, $b_i=\boldsymbol{\beta}^T\a_i+\sigma\epsilon_i$ and $\mathbf{b}=(b_1,b_2,...,b_n)$. Like the elastic net model, we set the signal-to-noise ratio to $\sigma=2$ and the noise $\boldsymbol{\epsilon}=(\epsilon_1,...,\epsilon_n)\sim N(\mathbf{0},\mathbf{I})$. For the experiments with four different data sizes, the algorithm details of VF-iDCA are the same as \cite{gao2022value}, including parameter tuning. For LDMMA, the initial value of the iteration is set to $\boldsymbol{\lambda}^0=(0.1,...,0.1)$, and $\epsilon=1$ is also set. It is worth noting that if $\epsilon$ is too small, the feasible domain of problem \eqref{eq:blpeps} is insufficient to complete the iteration. This premature termination will result in an abnormally low validation error and a larger test error. In machine learning, we call it overfitting, which is usually caused by poor generalization performance of the model. The occurrence of overfitting indicates that the model only has a good learning effect on the training and validation data but has no practical value for unlearned test data and more extensive data. Overall, in this series of experiments, we need to choose an appropriate value of $\epsilon$, which can avoid overfitting and prevent the solution of problem \eqref{eq:blpeps} from deviating too much from the solution of the original problem \eqref{eq1}.

% The simulated data is generated referred to \cite{feng2018gradient}. Each dataset contains $n$ training, $n/3$ validation, and 100 test observations. The predictors $\mathbf{A}$ are generated from a standard normal distribution. The response $\b$ was generated by $\b=\sum\limits_{i=1}^3\mathbf{A}^{(i)}\boldsymbol{\beta}^{(i)}+\sigma\boldsymbol{\epsilon}$ where $\boldsymbol{\beta}^{(i)}=(1,2,3,4,5,0,...,0)$, the signal to noise ratio $\sigma=2$, and $\boldsymbol{\epsilon}\sim N(\mathbf{0},\mathbf{I})$.

% over a $10\times10$ log-spaced grid from $10^{-3}$ to $10$. For random search, we search over the transformed variables $u_m$, where $u_m=log_{10}(\lambda_m)$, for $m=1,2,...,M+1$, and the space of $u_m$ is defined as a uniform distribution on $[-3,1]$.
% For all four experiments, grid search is performed over a $10\times10$ log-spaced grid. Random search is performed to search transformed vectors for hyperparameters. We implement VF-iDCA with the same algorithmic details as \cite{gao2022value}. For LDMMA, the initial guess for $\boldsymbol{\lambda}^0$ is $(0.1,...,0.1)$. We set $\epsilon=1$ for the $\epsilon$-perturbed problem. If $\epsilon$ is too small, we will solve the subproblem in Proposition \ref{pro:sgl} too precise and obtain abnormally low validation error, which results in excessive test error. Overall, we avoid overfitting with appreciably larger $\epsilon$.

\subsection{Support Vector Machine}
We fetch the datasets with libSVM toolbox and obtain the corresponding observation matrix and label vector of all datasets. Each dataset is divided into two seperate parts: a cross-validation training set $\Omega$ containing $3\lfloor N/6\rfloor$ samples, and a test set $\Omega_{test}$ containing the remaining samples. Based on this devision, we partition the entire training set into multiple equal parts and iteratively use one part as the validation set and the remaining parts as the training set to solve the SVM problem. In the experiment, we performed 3-fold and 6-fold cross-validation on the training and validation sets for each of the six datasets to optimize hyperparameters.

Finally, we use the obtained hyperparameters and corresponding model to compute the error on the validation set. We repeat this process for each part to reduce the impact of data variability on the model.  However, in the process of solving the SVM problem, cross-validation is involved, and the obtained hyperparameters satisfy the minimization of the lower-level function of the SVM problem. Therefore, we need to use the hyperparameters to solve the upper-level problem to obtain the corresponding validation and test errors. We randomly divide the cross-validation training set $\Omega$ into $K$ mutually exclusive subsets $\{\Omega_{val}^k\}_{k=1}^K$, each of which will be used as the validation set. The remaining parts will be used as the training set $\Omega_{tr}^k=\Omega\setminus\Omega_{val}^k$. We define the loss function on the validation set in the cross-validation process as:
\begin{equation}\label{eq:Theta_val}
\begin{array}{l}
      \Theta_{val}(\w^1,\w^2,\dots,\w^K,\mathbf{c})   := \frac{1}{K}\sum\limits_{k=1}^K\frac{1}{|\Omega_{val}^k|}\sum\limits_{j\in \Omega_{val}^k}\max(1-b_j(\a_j^\top\w^k-c^k),0),
\end{array}
\end{equation}
The primal problem of the support vector machine \eqref{eq:SVMprimal} is then transformed into the following bilevel program \citep{kunapuli2008classification}:
\begin{equation}\label{eq:SVMbilevel}
    \begin{array}{ll}
         \min\limits_{\w,c} & \Theta_{val}(\w^1,\w^2,\dots,\w^K,\mathbf{c}) \\
         {\rm s.t.} & \lambda>0,\bar{\w}_{lb}\leq\bar{\w}\leq\bar{\w}_{ub}\\
         & (\w^k,c^k)\in\mathop{\arg\min}\limits_{-\overline{\w}\leq\w\leq\overline{\w}}\left\{\sum\limits_{j\in \Omega_{tr}^k}\max(1-b_j(\a_j^\top\w-c),0)+\frac{\lambda}{2}\|\w\|_2^2\right\},\\
         & k=1,2,\dots,K.
    \end{array}
\end{equation}
where $\mathbf{c}=(c^1,c^2,\dots,c^K)$, $c^1,c^2,\dots,c^K$ and $\w^1,\w^2,\dots,\w^K$ are $K$ parallel copies of $c$ and $\w$. $\bar{\w}{lb}$ and $\bar{\w}{lb}$ are the upper and lower bounds of $\bar{\w}$, respectively. We can define a loss function on the test set analogous to \eqref{eq:Theta_val}:
\begin{equation}\label{eq:Theta_tr}
\begin{array}{ll}
     & \Theta_{tr}(\w^1,\w^2,\dots,\w^K,\mathbf{c})   \\
     & := \frac{1}{K}\sum\limits_{k=1}^K\frac{1}{|\Omega_{tr}^k|}\sum\limits_{j\in \Omega_{tr}^k}\max(1-b_j(\a_j^\top\w^k-c^k),0),
\end{array}
\end{equation}
Correspondingly, the subproblem \eqref{eq:svm2} to be solved is transformed into:
\begin{equation}\label{eq:svm_fold}
    \begin{array}{lcl}
        &\min\limits_{\lambda,\bar{\w},\w^1,\w^2,\dots,\w^K,c} & \Theta_{val}(\w^1,\w^2,\dots,\w^K,\mathbf{c}) ,\\
        & {\rm s.t.} & \Theta_{tr}(\w^1,\w^2,\dots,\w^K,\mathbf{c})+r_1\lambda+\mathbf{r}_2^\top\overline{\w}+s-\mathbf{1}^T \mathbf{v}\leq0,\\
      &&    \frac{1}{2}\|\w^k\|^2\leq r_1,k=1,2,\dots,K\\
        &&  \left\|\begin{matrix}
             \sqrt{2}\bro\\ \lambda-s
         \end{matrix}\right\|_2\leq \lambda+s,\\
    &&      \left(\begin{matrix}
        {A_{tr}^k}^T\\ \mathbf{1}^T
        \end{matrix}\right)B_{tr}^k\mathbf{v}+\left(\begin{matrix}
        -\bro\\
        0
        \end{matrix}\right)+\left(\begin{matrix}
         {\boldsymbol{\alpha}}_2- {\boldsymbol{\alpha}}_1\\0
    \end{matrix}\right)=0,k=1,2,...,K\\
    && \boldsymbol{\alpha}_1+\boldsymbol{\alpha}_2=\mathbf{r}_2,\\
   &&      0\leq \mathbf{v}\leq1,\boldsymbol{\alpha}_1,\boldsymbol{\alpha}_2\geq0,\\
         &&-\overline{\w}\leq\w^k\leq\overline{\w},k=1,2,\dots,K\\
         &&\bar{\w}_{lb}\leq\bar{\w}\leq\bar{\w}_{ub}
    \end{array}
\end{equation}
Finally, we substitute the optimal solutions $\bar{\w},\lambda$ obtained from the above problem into the following problem and solve it again to obtain the optimal $(\w,c)$,
\begin{equation*}
    (\w,c)\in\mathop{\arg\min}\limits_{-\overline{\w}\leq\w\leq\overline{\w}}\left\{\sum\limits_{j\in \Omega}\max(1-b_j(\a_j^\top\w-c),0)+\frac{\lambda}{2}\|\w\|_2^2\right\}.
\end{equation*}

We use MOSEK solver to handle the 2-norm term in the objective function of the upper-level problem, which is convex and smooth. We also conduct VF-iDCA and other methods according to the setting in \cite{gao2022value}. For LDMMA, we set the initial point of the iteration to $\boldsymbol{\lambda}^0=(0.1,\dots,0.1)$ and parameters $\bar{\w}_{lb}=(10^{-6},\dots,10^{-6})$, $\bar{\w}_{ub}=(10,\dots,10)$ for the lower and upper bounds of $\bar{\w}$, respectively. We choose $\epsilon=1$ for 3-fold cross-validation and $\epsilon=5$ for 6-fold cross-validation to ensure the primal and dual feasibility of the subproblems in each iteration, which is crucial for MOSEK and prevents overfitting.

\subsection{Elastic Net with High Dimensional Datasets}
Compared with Section \ref{detail_sy_ela}, we only replace the synthetic datasets with real datasets. The gisette dataset comprises 5000 features and 6000 samples, whereas the sensit dataset encompasses 78823 features. For dataset partition, we extract 50, 25 examples as training set and 50, 25 examples as validation set, respectively. We set the initial point as $\boldsymbol{\lambda}^0=(0.01,0.01)$ and perturbation parameter $\epsilon=1$ for our algorithm. Meanwhile, we also conduct VF-iDCA and other methods according to the setting in \cite{gao2022value}.

\begin{table}[H]
\tiny
\caption{Support Vector Machine problems with 3-fold and 6-fold cross-validation on three datasets, where the number of features $p$ and samples $|\Omega|,|\Omega_{test}|$ are displayed together with dataset names.}
\label{table-SVM-3fold-6fold-app}
\centering
\small
% \footnotesize
\setlength\tabcolsep{4pt}
% \scalebox{1}{
\resizebox{\linewidth}{!}{
\begin{tabular}{ll|ccc|ccc}
\toprule
% & \multicolumn{3}{c}{\textbf{6-fold}}
 \multirow{2}{*}{\textbf{Dataset}} & \multirow{2}{*}{\textbf{Methods}} & \multicolumn{3}{c}{\textbf{3-fold}} & \multicolumn{3}{c}{\textbf{6-fold}} \vspace{2pt}\\
 \cline{3-5}\cline{6-8} \multicolumn{1}{c}{}& \multicolumn{1}{c}{} & \textbf{Times(s)} & \textbf{Val. Err.} & \textbf{Test Err.} & \textbf{Times(s)} & \textbf{Val. Err.} & \textbf{Test Err.}\\
 \midrule
 \multirow{5}{*}{\makecell[l]{liver-disorders-scale\\ $p=5$ \\ $|\Omega|=72$ \\ $|\Omega_{test}|=73$}} & Grid & $0.74\pm0.01$ & $0.65\pm0.08$ & $0.32\pm0.07$ & $1.16\pm0.02$ & $0.61\pm0.08$ & $0.32\pm0.06$\\
 & Random & $0.75\pm0.02$ & $0.63\pm0.07$ & $0.32\pm0.05$ & $1.16\pm0.04$ & $0.59\pm0.06$ & $0.32\pm0.05$ \\
 & TPE & $0.68\pm0.55$ & $0.65\pm0.08$ & $0.32\pm0.07$ & $2.26\pm1.67$ & $0.62\pm0.06$ & $0.32\pm0.06$ \\
 & VF-iDCA & $0.13\pm0.03$ & $0.52\pm0.07$ & $0.27\pm0.04$ & $0.27\pm0.03$ & $0.40\pm0.05$ & $0.30\pm0.04$ \\
 & LDMMA & $0.08\pm0.01$ & $0.46\pm0.08$ & $0.23\pm0.10$ & $0.15\pm0.04$ & $0.19\pm0.08$ & $0.24\pm0.08$
 \vspace{2pt}\\
 \hline \\ [-1.8ex]
 \multirow{5}{*}{\makecell[l]{diabetes-scale \\ $p=8$ \\ $|\Omega|=384$ \\ $|\Omega_{test}|=384$}} & Grid & $3.17\pm0.08$ & $0.55\pm0.03$ & $0.19\pm0.03$ & $6.22\pm0.21$ & $0.54\pm0.03$ & $0.33\pm0.04$\\
 & Random & $3.47\pm0.14$ & $0.56\pm0.03$ & $0.32\pm0.05$ & $7.18\pm0.30$ & $0.55\pm0.04$ & $0.30\pm0.05$ \\
 & TPE & $10.21\pm6.68$ & $0.55\pm0.04$ & $0.29\pm0.06$ & $76.67\pm36.39$ & $0.54\pm0.03$ & $0.34\pm0.06$ \\
 & VF-iDCA & $0.28\pm0.04$ & $0.48\pm0.03$ & $0.23\pm0.01$ & $0.65\pm0.03$ & $0.43\pm0.03$ & $0.23\pm0.02$ \\
 & LDMMA & $0.22\pm0.03$ & $0.49\pm0.02$ & $0.19\pm0.01$ & $0.55\pm0.10$ & $0.39\pm0.05$ & $0.20\pm0.02$
 \vspace{2pt}\\
 \hline \\ [-1.8ex]
 \multirow{5}{*}{\makecell[l]{breast-cancer-scale \\ $p=14$ \\ $|\Omega|=336$ \\ $|\Omega_{test}|=347$}} & Grid & $3.32\pm0.09$ & $0.08\pm0.01$ & $0.16\pm0.08$ & $6.32\pm0.11$ & $0.08\pm0.01$ & $0.15\pm0.12$\\
 & Random & $3.69\pm0.07$ & $0.09\pm0.01$ & $0.08\pm0.08$ & $7.20\pm0.12$ & $0.09\pm0.02$ & $0.10\pm0.11$ \\
 & TPE & $17.88\pm10.05$ & $0.09\pm0.01$ & $0.10\pm0.11$ & $34.66\pm20.57$ & $0.09\pm0.01$ & $0.18\pm0.13$ \\
 & VF-iDCA & $0.24\pm0.04$ & $0.09\pm0.01$ & $0.04\pm0.01$ & $0.57\pm0.12$ & $0.08\pm0.01$ & $0.03\pm0.01$ \\
 & LDMMA & $0.12\pm0.01$ & $0.08\pm0.01$ & $0.03\pm0.01$ & $0.42\pm0.17$ & $0.08\pm0.01$ & $0.02\pm0.01$
 \vspace{2pt}\\
 \hline \\ [-1.8ex]
 \multirow{5}{*}{\makecell[l]{ sonar \\ $p=60$ \\ $|\Omega|=102$ \\ $|\Omega_{test}|=106$}} & Grid & $10.08\pm0.33$ & $0.59\pm0.10$ & $0.41\pm0.14$ & $20.88\pm0.61$ & $0.63\pm0.06$ & $0.49\pm0.12$\\
 & Random & $10.30\pm0.18$ & $0.55\pm0.07$ & $0.31\pm0.08$ & $20.56\pm0.31$ & $0.58\pm0.03$ & $0.41\pm0.10$ \\
 & TPE & $42.80\pm13.95$ & $0.64\pm0.13$ & $0.45\pm0.11$ & $189.82\pm19.80$ & $0.70\pm0.06$ & $0.53\pm0.07$\\
 & VF-iDCA & $1.32\pm0.23$ & $0.03\pm0.02$ & $0.25\pm0.04$ & $3.03\pm0.09$ & $0.00\pm0.00$ & $0.24\pm0.04$\\
 & LDMMA & $0.82\pm0.15$ & $0.17\pm0.02$ & $0.25\pm0.04$ & $2.38\pm0.19$ & $0.00\pm0.00$ & $0.22\pm0.02$
 \vspace{2pt}\\
 \hline \\ [-1.8ex]
 \multirow{5}{*}{\makecell[l]{a1a \\ $p=123$ \\ $|\Omega|=801$ \\ $|\Omega_{test}|=804$}} & Grid & $17.07\pm0.36$ & $0.41\pm0.02$ & $0.24\pm0.02$ & $36.77\pm0.99$ & $0.39\pm0.02$ & $0.24\pm0.01$ \\
 & Random & $17.81\pm0.30$ & $0.41\pm0.02$ & $0.21\pm0.03$ & $39.03\pm0.65$ & $0.39\pm0.02$ & $0.21\pm0.02$\\
 & TPE & $187.91\pm39.92$ & $0.42\pm0.02$ & $0.23\pm0.02$ & $447.17\pm85.49$ & $0.40\pm0.02$ & $0.24\pm0.01$\\
 & VF-iDCA & $2.40\pm0.13$ & $0.27\pm0.02$ & $0.17\pm0.01$ & $11.01\pm1.26$ & $0.19\pm0.02$ & $0.18\pm0.01$\\
 & LDMMA & $1.24\pm0.12$ & $0.20\pm0.02$ & $0.15\pm0.08$ & $8.04\pm0.71$ & $0.15\pm0.05$ & $0.17\pm0.01$
 \vspace{2pt}\\
 \hline \\ [-1.8ex]
 \multirow{5}{*}{\makecell[l]{w1a \\ $p=300$ \\ $|\Omega|=1236$ \\ $|\Omega_{test}|=1241$}} & Grid & $20.08\pm0.33$ & $0.59\pm0.10$ & $0.41\pm0.14$ & $104.47\pm2.99$ & $0.06\pm0.01$ & $0.03\pm0.00$\\
 & Random & $20.30\pm0.18$ & $0.55\pm0.07$ & $0.31\pm0.08$ & $147.88\pm8.64$ & $0.05\pm0.00$ & $0.02\pm0.00$\\
 & TPE & $85.80\pm13.95$ & $0.64\pm0.13$ & $0.45\pm0.11$ & $682.35\pm17.52$ & $0.06\pm0.01$ & $0.03\pm0.00$\\
 & VF-iDCA & $4.32\pm0.23$ & $0.03\pm0.02$ & $0.03\pm0.00$ & $25.37\pm3.10$ & $0.01\pm0.00$ & $0.03\pm0.00$\\
 & LDMMA & $2.19\pm0.24$ & $0.01\pm0.00$ & $0.01\pm0.00$ & $15.25\pm2.90$ & $0.01\pm0.00$ & $0.02\pm0.00$\\
\bottomrule
\end{tabular}
}
\end{table}

\end{document}